\theoremstyle{plain}
\newtheorem{theorem}{Theorem}[section]
\newtheorem{lemma}[theorem]{Lemma}
\newtheorem{proposition}[theorem]{Proposition}
\newtheorem{corollary}[theorem]{Corollary}
\theoremstyle{definition}
\newtheorem{definition}[theorem]{Definition}
\newtheorem*{acknowledgments}{Acknowledgments}
\newcommand{\bZ}{\mathbbm{Z}}\newcommand{\bQ}{\mathbbm{Q}}
\newcommand{\bC}{\mathbbm{C}}
\newcommand{\Br}{\mathrm{Br}}
\newcommand{\ta}{\tau}\newcommand{\la}{\lambda}
\newcommand{\al}{\alpha}\newcommand{\be}{\beta}\newcommand{\ga}{\gamma}
\newcommand{\de}{\delta}
\newcommand{\ca}{\sigma_{3{\rm A}}}
\newcommand{\cb}{\sigma_{3{\rm B}}}
\newcommand{\caa}{\sigma_{4{\rm A}}}
\newcommand{\cbb}{\sigma_{4{\rm B}}}
\newcommand{\cC}{\mathcal{C}}\newcommand{\cD}{\mathcal{D}} 
\newcommand{\cA}{\mathcal{A}}\newcommand{\cS}{\mathcal{S}} 
\title{Three-dimensional purely quasi-monomial actions}
\author{Akinari Hoshi} 
\address{Department of Mathematics, Niigata University, Niigata 950-2181, Japan}
\email{hoshi@math.sc.niigata-u.ac.jp}
\author{Hidetaka Kitayama}
\address{Department of Mathematics, Wakayama University, Wakayama 640-8510, Japan}
\email{hkitayam@center.wakayama-u.ac.jp}
\keywords{Rationality problem, monomial actions, Noether's problem, 
algebraic tori.}
\thanks{This work was supported by JSPS KAKENHI Grant Numbers 
24740014, 25400027, 15K17511. 
Some part of this work was done during 
the authors visited the National Center for Theoretic Sciences 
(Taipei Office), whose support is gratefully acknowledged.}
\subjclass[2010]{Primary 12F20, 13A50, 14E08.}
\begin{document}
\maketitle
\begin{abstract}
Let $G$ be a finite subgroup of $\mathrm{Aut}_k(K(x_1, \ldots, x_n))$ 
where $K/k$ is a finite field extension and $K(x_1,\ldots,x_n)$ is the 
rational function field with $n$ variables over $K$. 
The action of $G$ on $K(x_1, \ldots, x_n)$ is called quasi-monomial 
if it satisfies the following three conditions
{\rm (i)} $\sigma(K)\subset K$ for any $\sigma\in G$; 
{\rm (ii)} $K^G=k$ where $K^G$ is the fixed field under the action of $G$; 
{\rm (iii)} for any $\sigma\in G$ and $1 \leq j \leq n$, 
$\sigma(x_j)=c_j(\sigma)\prod_{i=1}^n x_i^{a_{ij}}$ 
where $c_j(\sigma)\in K^\times$ and $[a_{i,j}]_{1\le i,j \le n} \in GL_n(\bZ)$. 
A quasi-monomial action is called purely quasi-monomial 
if $c_j(\sigma)=1$ for any $\sigma \in G$, any $1\le j\le n$. 
When $k=K$, a quasi-monomial action is called monomial. 
The main problem is that, under what situations, 
$K(x_1,\ldots,x_n)^G$ is rational (= purely transcendental) over $k$. 
For $n=1$, the rationality problem was solved by Hoshi, Kang and Kitayama. 
For $n=2$, the problem was solved by Hajja 
when the action is monomial, 
by Voskresenskii 
when the action is faithful on $K$ and purely quasi-monomial, 
which is equivalent to the rationality problem of $n$-dimensional 
algebraic $k$-tori which split over $K$, 
and by Hoshi, Kang and Kitayama 
when the action is purely quasi-monomial. 
For $n=3$, the problem was solved by Hajja, Kang, Hoshi and Rikuna 
when the action is purely monomial, 
by Hoshi, Kitayama and Yamasaki 
when the action is monomial except for one case and by Kunyavskii 
when the action is faithful on $K$ and purely quasi-monomial. 
In this paper, we determine the rationality when $n=3$ and 
the action is purely quasi-monomial except for few cases 
using a technique of conjugacy classes move. 
%which is introduced by the second-named author. 
As an application, we will show the rationality of some 
$5$-dimensional purely monomial actions which are decomposable. 
\end{abstract}

\tableofcontents

%\newpage

%%%%%%%%%%%%%%%%%%%%%%%%%%%%%%%%%%%%%%%%%%%%%%%%%%%%%%%%%%%%%
\section{Introduction}

Let $k$ be a field and  
$K$ be a finitely generated field extension of $k$. 
$K$ is called {\it $k$-rational} (or {\it rational over $k$}) if $K$ is
purely transcendental over $k$, i.e. $K$ is isomorphic to $k(x_1,
\ldots, x_n)$, the rational function field with $n$ variables over
$k$ for some integer $n$. 
$K$ is called {\it stably $k$-rational} if 
$K(y_1,\ldots,y_m)$ is $k$-rational for some $y_1,\ldots,y_m$ 
such that $y_1,\ldots,y_m$ are algebraically independent over $K$. 
When $k$ is an infinite field, 
$K$ is called {\it retract $k$-rational} if $K$ is the quotient field of
some integral domain $A$ where $k\subset A\subset K$ satisfying
the conditions that there exist a polynomial ring
$k[X_1,\ldots,X_m]$, some non-zero element $f\in
k[X_1,\ldots,X_m]$, and $k$-algebra morphisms $\varphi:A\to
k[X_1,\ldots,X_m][1/f]$, $\psi: k[X_1,\ldots,X_m][1/f]\to A$ such
that $\psi\circ\varphi =1_A$. 
$K$ is called {\it $k$-unirational} if 
$k \subset K \subset k(x_1, \ldots, x_n)$ for some integer $n$. 
It is not difficult to see that 
``$k$-rational"$\Rightarrow$``stably $k$-rational"
$\Rightarrow$``retract $k$-rational"$\Rightarrow$``$k$-unirational". 
The reader is referred to the papers \cite{MT86,CTS07,Swa83}
for surveys of the various rationality problems, e.g. Noether's
problem. 
We will restrict ourselves to consider 
the rationality problem of the fixed field 
$K(x_1,\ldots,x_n)^G$ 
under the following special kind of actions:  

\begin{definition}[{\cite[Definition 1.1]{HKK14}}]\label{def}
Let $G$ be a finite subgroup of ${\rm Aut}_k(K(x_1,\ldots,x_n))$.\\ 
(1) The action of $G$ on $K(x_1,\ldots,x_n)$ is called {\it quasi-monomial} 
if it satisfies the following three conditions:\\
(i) $\sigma(K)\subset K$ for any $\sigma\in G$;\\
(ii) $K^G=k$, where $K^G$ is the fixed field under the action of $G$;\\
(iii) for any $\sigma\in G$ and any $1 \le j \le n$,
\begin{align*}
\sigma(x_j)=c_j(\sigma)\prod_{i=1}^n x_i^{a_{ij}}\label{acts1}
\end{align*} 
where
$c_j(\sigma)\in K^\times$ and $[a_{ij}]_{1\le i,j \le n} \in
GL_n(\bZ)$.\\
(2) The quasi-monomial action is called {\it purely quasi-monomial} action 
if $c_j(\sigma)=1$ for any $\sigma \in G$ and any $1\le j\le n$ in (iii). \\
(3) The quasi-monomial action is called {\it monomial} action 
if $G$ acts trivially on $K$, i.e.\ $k=K$.\\ 
(4) The quasi-monomial action is called {\it purely monomial} action 
%if purely and $k=K$. 
if it is purely quasi-monomial and monomial. 
\end{definition}
We have the following implications: 
\begin{center}
quasi-monomial action\ \ $\Leftarrow$\ \ 
purely quasi-monomial action\\
$\Uparrow$\hspace*{3.5cm}$\Uparrow$~~~~~~~~\\
\ monomial action\ \ $\Leftarrow$\ \ purely monomial action.
\end{center}

\vspace*{5mm}
%%%%%%%%%%%%%%%%%%%%%%%%%%%%%%%%%%
{\it Algebraic torus case.} 

When $G\simeq {\rm Gal}(K/k)$, i.e. $G$ acts faithfully on $K$, and 
$G$ acts on $K(x_1,\ldots,x_n)$ by purely quasi-monomial $k$-automorphisms, 
the rationality problem of $K(x_1,\ldots,x_n)^G$ coincides with 
that of algebraic $k$-tori of dimension $n$ which split over $K$ 
(see \cite[Section 1]{HKK14}, \cite{Hos14}, \cite{HY17}). 
The rationality problem of algebraic $k$-tori of dimension two and three 
was solved by Voskresenskii \cite{Vos67} 
and Kunyavskii \cite{Kun87} respectively:

\begin{theorem}[Voskresenskii \cite{Vos67}] 
\label{thm:Vos} 
Let $k$ be a field. All the two-dimensional algebraic
$k$-tori are $k$-rational. 
In particular, $K(x_1,x_2)^G$ is 
$k$-rational if $G$ acts on $K$ faithfully and on 
$K(x_1,x_2)$ by purely quasi-monomial $k$-automorphisms.
\end{theorem}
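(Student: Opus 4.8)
The plan is to reduce the rationality of a two-dimensional $k$-torus to a completely explicit case analysis governed by the integral representation $G \to GL_2(\bZ)$ attached to the character lattice. First I would recall the standard dictionary: an $n$-dimensional algebraic $k$-torus $T$ split by a Galois extension $K/k$ with group $G = \mathrm{Gal}(K/k)$ corresponds to a $\bZ G$-lattice $M$ of rank $n$ (its character group $\widehat{T}$), and the function field $k(T)$ is $k$-isomorphic to $K(M)^G$, where $K(M)$ is the quotient field of the group algebra $K[M]$ on which $G$ acts semilinearly — this is exactly the purely quasi-monomial situation of Definition~\ref{def} with the multiplier cocycle trivial. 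So everything is controlled by the conjugacy class of the image of $G$ in $GL_2(\bZ)$.

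The key input is the classification of finite subgroups of $GL_2(\bZ)$ up to conjugacy: there are finitely many (the cyclic groups of orders $1,2,3,4,6$ and the dihedral groups of orders $2,4,6,8,12$, i.e.\ $13$ conjugacy classes of finite subgroups, arising as the point groups of the two-dimensional crystallographic groups). Thus $G$ acts on the rank-two lattice $M$ through one of these finitely many finite quotients, say $\bar G$. The main step is then: for each such $\bar G$, analyze the $\bZ\bar G$-lattice structure of $M$. In rank two every $\bZ\bar G$-lattice is well understood — one shows $M$ is either a permutation lattice, or a direct sum of rank-one lattices (on which $G$ acts through a sign character), or one of a small number of indecomposable rank-two lattices (e.g.\ the root lattice $A_2$ with its $S_3$-action, or $\bZ[i]$, $\bZ[\zeta_3]$ with multiplication-by-root-of-unity actions, together with their ``augmentation'' and ``dual'' relatives). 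For each, $K(M)^G$ is shown to be $k$-rational by an explicit birational change of variables.

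Concretely, I would handle the cases as follows. If $M$ is a permutation lattice (or a permutation lattice twisted by a line), $K(M)^G = K(x_1,x_2)^G$ reduces to a quasi-monomial action whose matrices are monomial matrices, and one invokes the classical ``no-name'' / Hilbert~90 type arguments: a faithful linear action of $G$ on a $K$-vector space $V$ with $K/k$ the given extension has $K$-rational quotient, and one linearizes the torus action using additive Hilbert~90. If the action factors through a cyclic group $C_m$, $m \in \{2,3,4,6\}$, acting on $\bZ[\zeta_m]$ by multiplication, then $K[M]^G$ is the function field of a norm-one torus $R^{(1)}_{K/k}(\mathbb{G}_m)$ or a quotient of $R_{K/k}(\mathbb{G}_m)$, and rationality follows from an explicit parametrization (for $m=3,4,6$ the cyclotomic field has a convenient basis; for the norm-one torus one uses that a conic with a rational point is rational). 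For the dihedral cases one combines the cyclic analysis with a further quadratic descent. The \emph{main obstacle} is organizing the indecomposable rank-two lattices: one must be sure the list of $\bZ\bar G$-lattices is complete (this uses that $\bZ[\zeta_m]$ for $m \in \{3,4,6\}$ has class number one, so indecomposable lattices over the relevant orders are classified), and then for each genuinely non-permutation, non-split lattice one must produce the explicit rational parametrization — the $A_2$-lattice under $S_3$ and the norm-one torus for $C_2, C_3$ being the cases where the change of variables is least obvious and where the ``conic with a rational point'' trick does the real work.
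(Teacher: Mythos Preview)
The paper does not give its own proof of Theorem~\ref{thm:Vos}; it is stated as a background result with a citation to Voskresenskii~\cite{Vos67} and then used as a black box (for instance in Section~\ref{subsec:4}, Cases~2 and~3, and in Section~\ref{subsec:5}). So there is no in-paper argument to compare your proposal against.

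That said, your outline is essentially the classical Voskresenskii approach: reduce to the finite image $\bar G \le GL_2(\bZ)$, run through the finitely many conjugacy classes of such subgroups, and for each one produce an explicit rational parametrization of $K(M)^G$. This is correct in spirit and matches the standard modern expositions. Two small comments. First, your remark that ``every rank-two $\bZ\bar G$-lattice is well understood'' is doing a lot of work: for the purposes of this theorem one does not actually need a full classification of indecomposable lattices over each order---one only needs to handle, for each of the (up to conjugacy) finite subgroups of $GL_2(\bZ)$, the tautological lattice $\bZ^2$ with that action, since the torus is determined by the image in $GL_2(\bZ)$ up to conjugacy. So the case list is exactly the list of conjugacy classes of finite subgroups of $GL_2(\bZ)$, not a longer list of abstract lattices. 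Second, the ``conic with a rational point'' step you flag is indeed the heart of the matter for the non-permutation cases (e.g.\ the norm-one torus $R^{(1)}_{K/k}\mathbb{G}_m$ for a quadratic $K/k$), and it goes through because the relevant conic visibly has a $k$-point coming from the identity of the torus; you have identified the right place where something nontrivial happens.
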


\begin{theorem}[{Kunyavskii \cite{Kun87}, see also Kang \cite[Section 1]{Kan12}}] 
\label{thm:Kun} 
Let $k$ be a field. 
All the three-dimensional algebraic
$k$-tori are $k$-rational except for the $15$ cases in the
list of \cite[Theorem 1]{Kun87}. 
In particular, $K(x_1,x_2,x_3)^G$ is 
$k$-rational if $G$ acts on $K$ faithfully and on 
$K(x_1,x_2,x_3)$ by purely quasi-monomial $k$-automorphisms 
except for the $15$ cases.
For the exceptional $15$ cases, they are not $k$-rational; 
in fact, they are even not retract $k$-rational.
\end{theorem}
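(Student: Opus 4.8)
The plan is to translate the statement into the language of $G$-lattices and then to run a classification. A three-dimensional algebraic $k$-torus $T$ split by $K$ is determined up to $k$-isomorphism by its character lattice $\widehat{T}=\mathrm{Hom}(T_{\overline{k}},\mathbb{G}_m)$, which is a $\bZ[G]$-lattice of rank $3$ for $G=\mathrm{Gal}(K/k)$, so that $G$ acts through a finite subgroup of $GL_3(\bZ)$; moreover $k(T)=K(x_1,x_2,x_3)^G$ for the associated purely quasi-monomial action, and the $k$-birational type of $T$ depends only on the conjugacy class of the image of $G$ in $GL_3(\bZ)$, equivalently on the $\bZ$-isomorphism class of $\widehat{T}$. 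The first step is therefore to invoke the classical classification of finite subgroups of $GL_3(\bZ)$ up to conjugacy: there are $73$ of them, the arithmetic crystal classes in dimension three, which I would organise hierarchically, from the maximal finite subgroups (notably the order-$48$ hyperoctahedral group and the order-$24$ hexagonal group) down to their subgroups, and then treat class by class.

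The second step is to attach to each class its flabby class $[\widehat{T}]^{fl}$, by choosing a flabby resolution $0\to\widehat{T}\to P\to F\to 0$ with $P$ a permutation $\bZ[G]$-lattice and $F$ flabby, and to use the standard dictionary between lattice invariants and rationality (Endo--Miyata, Voskresenskii, Saltman, Colliot-Th\'el\`ene--Sansuc): if $F$ is stably permutation then $T$ is stably $k$-rational, while $T$ is retract $k$-rational if and only if $F$ is invertible, i.e.\ a direct summand of a permutation lattice. For the $58$ non-exceptional classes one shows $[\widehat{T}]^{fl}=0$, that is, $F$ is stably permutation. In many cases this is immediate, e.g.\ when $\widehat{T}$ is itself a permutation lattice (so $T$ is quasi-split and manifestly rational) or when $\widehat{T}$ fits into a short exact sequence $0\to\widehat{T}\to\bigoplus_i\bZ[G/H_i]\to\bZ^r\to 0$ that one recognises as stably permutation; in the remaining ones one produces an explicit permutation lattice $Q$ together with an isomorphism $F\oplus Q\cong Q'$ by linear algebra over $\bZ[G]$. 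It is part of the conclusion of the classification that in dimension three there is no intermediate behaviour: $F$ is either stably permutation or not invertible.

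The third step upgrades stable rationality to genuine rationality for these $58$ classes. The tool is an exact sequence of tori $1\to T'\to T\to T''\to 1$, dual to an exact sequence of $G$-lattices, in which $T'$ or $T''$ has dimension $\le 2$; by Theorem~\ref{thm:Vos} such a factor is $k$-rational, and when the fibration is Zariski-locally trivial — which one arranges by choosing the subtorus or the quotient appropriately, for instance a quasi-split quotient $T''$ — one obtains $k(T)\cong k(T')(x_1,\dots)$ or $k(T)\cong k(T'')(x_1,\dots)$ and concludes by induction on the dimension, the cases $n=1,2$ being known. This is where the bulk of the explicit work lies: for each of the $58$ classes one must exhibit a suitable $G$-invariant sublattice producing such a fibration and verify the local triviality.

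The last step handles the $15$ exceptional classes, where one must show that $T$ is not even retract $k$-rational, hence (by the criterion above) that $[\widehat{T}]^{fl}$ is not invertible. For each such class I would produce a subgroup $H\le G$ — typically a small non-cyclic abelian subgroup such as $(\bZ/2)^2$ or $\bZ/3\times\bZ/3$ — for which the restricted lattice $\widehat{T}|_H$ carries a nonzero cohomological obstruction that any invertible lattice must kill (for instance a nonvanishing $\widehat{H}^{r}(H',\widehat{T}|_{H'})$ for suitable $r$ and $H'\le H$, of the kind already met in the known counterexamples to Noether's problem and to the rationality of norm-one tori). Since restriction sends invertible lattices to invertible lattices, $\widehat{T}$ itself cannot be invertible, which finishes the case. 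The main obstacle is precisely this two-sided case analysis over the $73$ classes: on the positive side, matching each of the $58$ lattices with both an explicit stably-permutation flabby resolution and a locally trivial fibration by low-dimensional rational tori; on the negative side, pinning down, for each of the $15$ lattices, the exact witness subgroup and cohomology class certifying non-invertibility of the flabby class.
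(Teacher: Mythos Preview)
The paper does not give its own proof of this theorem; it is quoted as a known result due to Kunyavskii \cite{Kun87} (with the retract-rationality refinement referenced to Kang \cite{Kan12}) and is used as a black box at several points in the later arguments. There is therefore nothing in the paper to compare your proposal against.

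That said, your outline is broadly in line with the strategy in the original literature: classify the $73$ conjugacy classes of finite subgroups of $GL_3(\bZ)$, compute the flabby class of each character lattice, exhibit for the $58$ non-exceptional classes explicit fibrations by tori of dimension $\le 2$ to upgrade stable rationality to rationality, and for the $15$ exceptional classes show that the flabby class is not invertible via a cohomological obstruction on a suitable subgroup. One small slip: you mention $\bZ/3\times\bZ/3$ as a possible witness subgroup, but no finite subgroup of $GL_3(\bZ)$ contains such a group (the maximal finite subgroups have order $48$, and the $3$-Sylow is cyclic); the actual obstructions for the $15$ exceptional classes are detected on $2$-subgroups such as $(\bZ/2)^2$.
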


%%%%%%%%%%%%

The rationality problem for algebraic $k$-tori 
depends on the conjugacy class of 
%$\bZ$-class of 
$G\leq GL_n(\bZ)$. 
%Note that 
There exist $13$ (resp. $73$, $710$, $6079$) 
%$\bZ$-classes 
conjugacy classes of finite subgroups 
%forming $227$ $\bQ$-classes 
in $GL_2(\bZ)$ (resp. $GL_3(\bZ)$, $GL_4(\bZ)$, $GL_5(\bZ)$). 
A stably rational classification of 
the algebraic $k$-tori of dimension four and five 
was 
given by \cite{HY17}. 
\begin{theorem}[Hoshi, Yamasaki {\cite[Theorem 1.9]{HY17}}]\label{th1}
Let $K/k$ be a Galois extension and $G\simeq 
{\rm Gal}(K/k)$ be a finite subgroup of $GL_4(\bZ)$ 
which acts on $K(x_1,x_2,x_3,x_4)$ by purely quasi-monomial 
$k$-automorphisms.\\% via $(\ref{acts})$. \\
{\rm (i)} 
$K(x_1,x_2,x_3,x_4)^G$ is stably $k$-rational 
if and only if 
$G$ is conjugate to one of the $487$ groups which are not in 
{\rm \cite[Tables $2$, $3$ and $4$]{HY17}}.\\
{\rm (ii)} 
$K(x_1,x_2,x_3,x_4)^G$ is not stably but retract $k$-rational 
if and only if $G$ is conjugate to one of the $7$ groups which are 
given as in {\rm \cite[Table $2$]{HY17}}.\\
{\rm (iii)} 
$K(x_1,x_2,x_3,x_4)^G$ is not retract $k$-rational if and only if 
$G$ is conjugate to one of the $216$ groups which are given as 
in {\rm \cite[Tables $3$ and $4$]{HY17}}.
\end{theorem}
\begin{theorem}[Hoshi, Yamasaki {\cite[Theorem 1.12]{HY17}}]\label{th2}
Let $K/k$ be a Galois extension and $G\simeq 
{\rm Gal}(K/k)$ be a finite subgroup of $GL_5(\bZ)$ 
which acts on $K(x_1,x_2,x_3,x_4,x_5)$ by purely quasi-monomial 
$k$-automorphisms.\\% via $(\ref{acts})$. \\\\% via $(\ref{acts})$.\\
{\rm (i)} 
$K(x_1,x_2,x_3,x_4,x_5)^G$ is stably $k$-rational if and only if 
$G$ is conjugate to one of the $3051$ groups which are not in 
{\rm \cite[Tables $11$, $12$, $13$, $14$ and $15$]{HY17}}.\\
{\rm (ii)} 
$K(x_1,x_2,x_3,x_4,x_5)^G$ is not stably but retract $k$-rational 
if and only if $G$ is conjugate to one of the $25$ groups which are given as 
in {\rm \cite[Table $11$]{HY17}}.\\
{\rm (iii)} 
$K(x_1,x_2,x_3,x_4,x_5)^G$ is not retract $k$-rational if and only if 
$G$ is conjugate to one of the $3003$ groups which are given as 
in {\rm \cite[Tables $12$, $13$, $14$ and $15$]{HY17}}.
\end{theorem}
However, we do not know the $k$-rationality %of algebraic $k$-tori 
in Theorem \ref{th1} (i) and Theorem \ref{th2} (i). 
%of dimension four and five. 

\vspace*{5mm}
%%%%%%%%%%%%%%%%%%%%%%%%%%%%%%%%%%
{\it Monomial action case.} 

When $G$ acts on $K$ trivially, i.e. $K=k$, 
the rationality of $k(x_1,x_2)^G$ and $k(x_1,x_2,x_3)^G$ 
under the monomial actions of $G$ is known as follows: 

\begin{theorem}[{Hajja \cite{Haj87}}]\label{thHaj87}
Let $k$ be a field and $G$ be a finite group acting on $k(x_1,x_2)$ 
by monomial $k$-automorphisms. 
Then $k(x_1,x_2)^{G}$ is $k$-rational.
\end{theorem}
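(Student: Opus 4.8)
The plan is to reduce, by two successive quotients, to the situation in which $G$ is a finite subgroup of $GL_2(\bZ)$ carrying a scalar cocycle, and then to run through the finitely many conjugacy classes of such subgroups.

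First, for $\sigma\in G$ write $\sigma(x_1)=c_1(\sigma)\,x_1^{a_{11}}x_2^{a_{21}}$ and $\sigma(x_2)=c_2(\sigma)\,x_1^{a_{12}}x_2^{a_{22}}$ and set $\rho(\sigma)=[a_{ij}]\in GL_2(\bZ)$. Then $\rho\colon G\to GL_2(\bZ)$ is a homomorphism with finite image, and, replacing $G$ by its image in $\mathrm{Aut}_k(k(x_1,x_2))$, we may assume $G$ acts faithfully. Put $N=\ker\rho$; each $\sigma\in N$ acts by $\sigma(x_j)=c_j(\sigma)\,x_j$, so $\sigma\mapsto(c_1(\sigma),c_2(\sigma))$ embeds $N$ into $(k^\times)^2$ and $N$ is finite abelian. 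The sublattice $L=\{(m_1,m_2)\in\bZ^2:\ c_1(\sigma)^{m_1}c_2(\sigma)^{m_2}=1\text{ for all }\sigma\in N\}$ has index $|N|$ in $\bZ^2$; choosing a $\bZ$-basis $(p_1,q_1),(p_2,q_2)$ of $L$ and setting $u=x_1^{p_1}x_2^{q_1}$, $v=x_1^{p_2}x_2^{q_2}$, a count of field degrees gives $k(x_1,x_2)^N=k(u,v)$. Since $N\trianglelefteq G$, the linear part $\rho(G)$ preserves $L$, so $G/N$ again acts on $k(u,v)$ by monomial $k$-automorphisms and $k(x_1,x_2)^G=k(u,v)^{G/N}$. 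Renaming the variables, we are reduced to the case $G\hookrightarrow GL_2(\bZ)$ acting on $k(x_1,x_2)$ by monomial $k$-automorphisms with some scalar cocycle $(c_1,c_2)$.

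Now conjugating $G$ inside $GL_2(\bZ)$ amounts to a monomial change of variables, which does not affect $k$-rationality of the fixed field, so we may assume $G$ is one of the standard representatives of the (finitely many) conjugacy classes of finite subgroups of $GL_2(\bZ)$, whose isomorphism types are $C_1,C_2,C_3,C_4,C_6,C_2\times C_2,S_3,D_4,D_6$. Suppose first that $\bZ^2$ has a $G$-stable rank-one sublattice, which covers every case whose linear part contains no rotation of order $3$, $4$ or $6$. Then we may choose coordinates so that $\sigma(x_1)=c_1(\sigma)\,x_1^{\pm1}$ and $\sigma(x_2)=c_2(\sigma)\,x_1^{e(\sigma)}x_2^{\pm1}$ for all $\sigma$, the $x_2$-exponent being automatically $\pm1$. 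Thus $G$ stabilizes $k(x_1)$, the $x_2$-exponent defines a homomorphism $G\to\{\pm1\}$ with kernel $H$ of index $\le2$ on which no inversion of $x_2$ occurs, and, after splitting off the (pure scalar) subgroup acting trivially on $k(x_1)$ and replacing $x_2$ by a power, we may take $H$ to act faithfully on $k(x_1)$; then $\sigma\mapsto\sigma(x_2)/x_2$ is a $1$-cocycle of $H$ valued in $k(x_1)^\times$, a coboundary by Hilbert's Theorem~90, so $k(x_1,x_2)^H=k(x_1)^H(z)$ with $z$ transforming by inversion up to $k(x_1)^H$-scaling under $G/H$. When $H=G$, or when $G/H$ acts trivially on $k(x_1)^H$, this is $k$-rational because $k(x_1)^H\subseteq k(x_1)$ is $k$-rational by L\"uroth's theorem; otherwise the order-two quotient produces a conic bundle surface of the shape $w^2=\theta^2\bigl(v^2-4g\bigr)$ over a rational function field in one variable, carrying a $k$-rational point inherited from the torus $\mathbb{G}_m^2$ and having only a bounded number of degenerate fibres, hence $k$-rational by the theory of rational surfaces (or by an explicit parametrization).

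It remains to treat the cases in which $\rho(G)$ contains a rotation of order $3$, $4$ or $6$ --- the linear parts $C_3$, $C_4$, $C_6$, $S_3$, $D_4$, $D_6$. Here $\bZ^2$ has no $G$-stable rational line, so the fibring device above is unavailable, and $k(x_1,x_2)^G$ has to be computed directly for each standard subgroup: the $G$-invariant Laurent monomials span a lattice of rank at most two, and one adjoins to the field they generate one or two explicit non-monomial invariants ($G$-orbit sums of monomials and related symmetric expressions), tracking how the cocycle $(c_1,c_2)$ intervenes, until a pair of $k$-rational generators emerges, all the while verifying that the transcendence degree stays $2$ and that no new constants appear; the auxiliary rational surfaces that arise --- conic bundles and del Pezzo surfaces --- must again be recognised as $k$-rational, for which a $k$-point together with control on the number of degenerate fibres (resp.\ on the degree) suffices. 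I expect this final case analysis to be the main obstacle --- it is precisely the presence of a nontrivial scalar cocycle, rather than the purely monomial situation, that forces it, and the dihedral cases $D_4$ and $D_6$, where a rotation is combined with a reflection and the cocycle can interact with both, are the most delicate. Combining the reductions above with these computations shows that $k(x_1,x_2)^G$ is $k$-rational in every case.
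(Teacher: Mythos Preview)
The paper does not prove this statement: Theorem~\ref{thHaj87} is quoted from Hajja's paper \cite{Haj87} as a known background result, with no argument supplied here. So there is no proof in the paper to compare your attempt against, and your proposal must stand on its own.

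As an outline it is reasonable, and the first reduction (passing to $G/N$ with $N=\ker\rho$ so that $G\hookrightarrow GL_2(\bZ)$) is correct and standard. But the remainder has two genuine gaps. First, in the reducible case you assert that the order-two quotient yields a conic bundle ``carrying a $k$-rational point inherited from the torus $\mathbb{G}_m^2$ \dots\ hence $k$-rational by the theory of rational surfaces''. A $k$-point on the total space of a conic bundle over $\mathbb{P}^1$ does \emph{not} by itself force $k$-rationality: what you need is a section, i.e.\ a $k(t)$-point on the generic conic, and you have not produced one. Over a non-closed base field there are conic bundles with $k$-points and few degenerate fibres that are not $k$-rational, so this step requires either an explicit section or a much more careful invocation of the classification (and in Hajja's original argument this is exactly where the explicit case-by-case computations with the cocycle occur).

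Second, for the irreducible linear parts $C_3,C_4,C_6,S_3,D_4,D_6$ you write that the invariants ``have to be computed directly'' and that you ``expect this final case analysis to be the main obstacle''. That is an honest assessment, but it means the proof is not actually given: the entire substance of Hajja's theorem lies in carrying out precisely these computations with an arbitrary cocycle $(c_1,c_2)$, and the del~Pezzo/conic-bundle heuristic you sketch (``a $k$-point together with control on the number of degenerate fibres suffices'') is again not a theorem one can cite without further work. Until those cases are written out, what you have is a plan rather than a proof.
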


\begin{theorem}[{Hajja, Kang \cite{HK92,HK94}, Hoshi, Rikuna \cite{HR08}}]\label{thHKHR}
Let $k$ be a field and $G$ be a finite group acting on 
$k(x_1,x_2,x_3)$ by purely monomial $k$-automorphisms. 
Then $k(x_1,x_2,x_3)^G$ is $k$-rational.
\end{theorem}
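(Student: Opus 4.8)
The plan is to recast the problem in terms of $G$-lattices and run an induction on the rank, using the classification of finite subgroups of $GL_3(\bZ)$ to cut the argument down to a finite list of cases.

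The data simplifies first. Since every $\si\in G$ fixes $k$ pointwise and sends $x_j$ to a monomial $\prod_i x_i^{a_{ij}}$, the rule $\si\mapsto[a_{ij}]$ is an \emph{injective} homomorphism $G\hookrightarrow GL_3(\bZ)$ --- it already determines $\si$ on $x_1,x_2,x_3$ --- and conjugating its image by $P\in GL_3(\bZ)$ is the monomial change of variables $x_j\mapsto\prod_i x_i^{p_{ij}}$, a $k$-automorphism of $k(x_1,x_2,x_3)$ that does not affect $k$-rationality of the fixed field. So I may take $G$ to be a finite subgroup of $GL_3(\bZ)$, and since there are only finitely many conjugacy classes of these --- the $73$ arithmetic crystal classes in dimension three, lying over the $32$ point groups --- it suffices to settle each class individually. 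Write $M=\bZ^3$ for the corresponding $G$-lattice, so that $k(x_1,x_2,x_3)=k(M)$ with $G$ acting multiplicatively.

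The engine is a reduction of the rank of $M$, assembled from two standard devices. First, for $N\trianglelefteq G$ one has $k(M)^G=(k(M)^N)^{G/N}$, so one may pass to $k(M)^N$ first. Second, suppose $0\to L\to M\to P\to0$ is an exact sequence of $G$-lattices with $P$ a permutation or sign-permutation lattice; taking monomials $t_1,\dots,t_r$ dual to a lift of the distinguished basis of $P$ writes $k(M)=k(L)(t_1,\dots,t_r)$, with $G$ sending each $t_i$ to $c_{\si,i}\,t_{\pi_\si(i)}^{\pm 1}$ for suitable $c_{\si,i}\in k(L)^\times$; untwisting these cocycles one $G$-orbit at a time by Hilbert's Theorem~90 --- the rank-one step being exactly the $n=1$ situation treated by Hoshi--Kang--Kitayama --- gives $k(M)^G=k(L)^G(s_1,\dots,s_r)$ with $L$ of strictly smaller rank. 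The structural point is that a $G$-lattice of rank $3$ is either $\bQ$-irreducible, or has a one-dimensional $G$-stable summand over $\bQ$; and as $G$ acts on any one-dimensional $\bQ[G]$-module through a character of order at most $2$, in the latter case $M$ has a rank-one permutation or sign-permutation quotient. Peeling off such a quotient reduces $k(M)^G$ to the fixed field of a two-dimensional purely monomial action, which is $k$-rational by Theorem~\ref{thHaj87}. This handles all the reducible lattices.

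What remains --- and this is the hard part --- is the handful of $\bZ$-classes for which $M$ is $\bQ$-irreducible, namely the ``cubic'' groups (those built from $A_4$ and $S_4$, including their products with $\langle -I\rangle$), together with the few residual cases where the extraction above just fails to close up. For such $M$ there is no permutation sub- or quotient lattice of small rank; instead one checks by the standard lattice calculus --- e.g.\ via a flasque resolution $0\to M\to Q\to F\to0$ with $Q$ permutation --- that $k(M)^G$ is at least \emph{stably} $k$-rational, and the real work is to promote this to genuine $k$-rationality by exhibiting three algebraically independent generators explicitly (choosing invariant Laurent monomials adapted to the action of $G$, occasionally after adjoining a root of unity, taking invariants, and descending). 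I expect this promotion to be the true obstacle: it is exactly where the original development stalled, the last remaining case being the one settled by Hoshi and Rikuna through a delicate explicit construction of the invariant field. Once that case is in hand, collecting the pieces --- each case now reduced to Theorem~\ref{thHaj87} for $n=2$, to the trivial rank-one situation, or treated directly --- gives the $k$-rationality of $k(x_1,x_2,x_3)^G$ in every case.
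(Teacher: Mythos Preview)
The paper does not prove Theorem~\ref{thHKHR}; it is quoted as a known result from \cite{HK92,HK94,HR08}, so there is no in-paper argument to compare against. Your outline does track the historical strategy of those references: identify $G$ with a finite subgroup of $GL_3(\bZ)$ up to conjugacy, reduce the reducible lattices to rank $\le 2$ and invoke Theorem~\ref{thHaj87}, and handle the $\bQ$-irreducible (cubic) classes by explicit invariant computations, with \cite{HR08} closing the final case.

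That said, the reducible step as you have written it has a real gap. When $M$ is $\bQ$-reducible but $\bZ$-irreducible, the sequence $0\to L\to M\to P\to 0$ with $\mathrm{rank}_{\bZ}P=1$ does not split over $\bZ$, and the induced action on the lifted variable $t$ over $k(L)$ has the form $\si(t)=m_\si\,t^{\ep_\si}$ with $\ep_\si=\pm 1$ and $m_\si$ a nontrivial Laurent monomial. When $\ep_\si=-1$ occurs, the map $\si\mapsto m_\si$ is \emph{not} a $1$-cocycle in $Z^1(G,k(L)^\times)$, so your appeal to Hilbert~90 does not apply; and the one-variable result you invoke, Theorem~\ref{thHKKp113}, in its general part~(2) carries a norm-residue obstruction that you have not shown to vanish. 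The cited references in fact do not dispatch these $\bZ$-irreducible, $\bQ$-reducible classes by a uniform exact-sequence trick but by case-by-case computation. Likewise, for the $\bQ$-irreducible classes you correctly flag that stable rationality via a flasque resolution is not enough and that explicit generators must be produced, but you do not carry out any of that work; this is precisely the substantive content of \cite{HK94} and \cite{HR08}. As written, then, your proposal is a fair roadmap but not a proof: both the middle (reducible but non-split) and final (irreducible) stages remain to be executed.
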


Let $G$ be a finite group acting on $K(x_1, \ldots, x_n)$ by 
quasi-monomial $k$-automorphisms. 
There exists a group homomorphism 
$\rho_{\underline{x}}: G\rightarrow GL_n(\bZ)$ defined by 
$\rho_{\underline{x}}(\sigma)=[a_{i,j}]_{1\leq i,j\leq n}\in GL_n(\bZ)$ 
for any $\sigma\in G$ where 
$[a_{i,j}]_{1\leq i,j\leq n}$ is given in (iii) of Definition \ref{def}. 

%For quasi-monomial actions, the following results are already known 
%(see a survey \cite{Hos14}):

\begin{theorem}[Hoshi, Kang, Kitayama {\cite[Proposition 1.12]{HKK14}}]\label{thp12}
Let $k$ be a field and 
$G$ be a finite group acting on $K(x_1,\ldots,x_n)$ by
quasi-monomial $k$-automorphisms. Then there is a normal subgroup
$N$ of $G$ satisfying the following conditions:\\
{\rm (i)}
$K(x_1,\ldots,x_n)^N=K^N(y_1,\ldots,y_n)$ where each $y_i$ is of
the form $ax_1^{e_1}x_2^{e_2}\cdots x_n^{e_n}$ with $a\in
K^{\times}$ and $e_i\in \bZ$ \rm{(}we may take $a=1$ if the
action is a purely quasi-monomial action\rm{)};\\
{\rm (ii)} $G/N$
acts on $K^N(y_1,\ldots,y_n)$ by quasi-monomial $k$-automorphisms;\\
{\rm (iii)} $\rho_{\underline{y}}: G/N\to GL_n(\bZ)$ is an
injective group homomorphism.
\end{theorem}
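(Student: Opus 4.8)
The plan is to separate out the part of the $G$-action that is not detected by the matrix representation $\rho_{\underline{x}}$ and absorb it into the coefficient field. Concretely, let $N = \ker(\rho_{\underline{x}}) \trianglelefteq G$; by definition, for $\sigma \in N$ and each $j$ we have $\sigma(x_j) = c_j(\sigma) x_j$, so $N$ acts on $K(x_1,\ldots,x_n)$ by ``diagonal'' automorphisms that merely rescale the $x_j$ by elements of $K^\times$ (and act on $K$). The first step is therefore to understand the invariant field $K(x_1,\ldots,x_n)^N$ directly. Since each $\sigma \in N$ fixes the subfield $K(x_1,\ldots,x_n)$ up to these diagonal scalings, the monomials $x_1^{e_1}\cdots x_n^{e_n}$ that are $N$-invariant (i.e.\ those for which the product of the $c_j(\sigma)^{e_j}$ equals $1$ for all $\sigma\in N$) together with $K^N$ should generate the fixed field. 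This is where one invokes a Galois-descent / multiplicative-invariant argument: the group $N$ acts on the free abelian group $\bigoplus_j \bZ x_j$ (written multiplicatively) through the cocycle $\sigma \mapsto (c_j(\sigma))_j$ twisted by the $K$-action, and by the normal basis theorem together with Hilbert 90 (in the form used in \cite{HKK14}) one produces new variables $y_i = a_i x_1^{e_{i1}}\cdots x_n^{e_{in}}$, $a_i \in K^\times$, that are $N$-invariant and algebraically independent, with $K(x_1,\ldots,x_n)^N = K^N(y_1,\ldots,y_n)$; in the purely quasi-monomial case all $c_j(\sigma)=1$, so one may take every $a_i = 1$ and the $y_i$ are honest Laurent monomials. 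This establishes (i).

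The second step is to check that the residual group $\overline{G} := G/N$ acts on $K^N(y_1,\ldots,y_n)$ by quasi-monomial $k$-automorphisms, which is (ii). Here one must verify the three conditions of Definition~\ref{def} for the new data: condition (i), that $\overline{G}$ preserves $K^N$, follows because $N \trianglelefteq G$ so $G$ permutes $K^N$ among itself and $N$ acts trivially on it; condition (ii), that $(K^N)^{\overline{G}} = k$, is immediate since $(K^N)^{\overline{G}} = K^G = k$; and condition (iii), that $\overline{G}$ acts on the $y_i$ by monomial formulas in the $y_i$ with coefficients in $(K^N)^\times$, requires a short computation — one takes $\sigma \in G$, applies it to $y_i = a_i \prod_j x_j^{e_{ij}}$, uses the monomial action on the $x_j$, and re-expresses the result as $c_i'(\sigma)\prod_\ell y_\ell^{b_{i\ell}}$ with $[b_{i\ell}] \in GL_n(\bZ)$; the exponent matrix $[b_{i\ell}]$ is the conjugate/transpose of $\rho_{\underline{x}}(\sigma)$ under the integral change of basis given by $[e_{ij}]$, hence lies in $GL_n(\bZ)$, and the coefficient $c_i'(\sigma)$ lands in $(K^N)^\times$ because it is built from $a_i$, the $c_j(\sigma)$, and the $\sigma$-images of the $a_j$, all of which one checks are $N$-invariant. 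In the purely quasi-monomial case the whole discussion stays inside purely quasi-monomial actions.

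Finally, (iii) is essentially a tautology once the construction is in place: the map $\rho_{\underline{y}} : G/N \to GL_n(\bZ)$ sends $\sigma N$ to the matrix $[b_{i\ell}]$ computed above, and by construction an element $\sigma$ acts trivially on the exponents of the $y_i$ precisely when $\rho_{\underline{x}}(\sigma) = I$, i.e.\ when $\sigma \in N$; hence $\ker \rho_{\underline{y}}$ is trivial and $\rho_{\underline{y}}$ is injective. The main obstacle I expect is Step~1 — producing the explicit $N$-invariant variables $y_i$ and proving they generate the full fixed field rather than merely a subfield. This is the classical ``rationality of multiplicative invariants under the kernel action'' phenomenon, and the cleanest route is to reduce to the faithful case on $K$ (so that $N$ acts through $\mathrm{Gal}(K^N \cdot (\text{split torus})/K^N)$), invoke the normal basis theorem to trivialize the cocycle, and then count transcendence degrees and use that $[K(x_1,\ldots,x_n):K^N(y_1,\ldots,y_n)]$ equals $|N|$ by a degree/orbit argument to conclude the inclusion $K^N(y_1,\ldots,y_n)\subseteq K(x_1,\ldots,x_n)^N$ is an equality. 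Everything else is bookkeeping with $GL_n(\bZ)$ and the cocycle condition.
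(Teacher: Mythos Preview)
The paper does not prove this theorem; it is quoted from \cite[Proposition 1.12]{HKK14} without proof, so there is no argument here to compare against. Your overall plan---take $N=\ker\rho_{\underline{x}}$, compute $K(x_1,\ldots,x_n)^N$, and descend to $G/N$---is the correct one and is the standard route.

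There is, however, a genuine gap in your Step~2. You assert that the exponent matrix $[e_{ij}]$ defining the $y_i$ is an ``integral change of basis,'' i.e.\ lies in $GL_n(\bZ)$, and deduce $[b_{i\ell}]\in GL_n(\bZ)$ by conjugation. This fails exactly when $N$ does not act faithfully on $K$. Set $N_K=N\cap\ker(G\to\mathrm{Aut}(K))$; each $\sigma\in N_K$ acts by $x_j\mapsto c_j(\sigma)x_j$ with $c_j(\sigma)$ a root of unity in $K$, and Hilbert~90 cannot kill this part of the cocycle. The $N$-invariant monomials then project onto a proper finite-index sublattice $L\subsetneq\bZ^n$ (of index $|N_K|$, by Pontryagin duality applied to the injection $N_K\hookrightarrow\mu(K)^n$), so $\det[e_{ij}]=\pm|N_K|\neq\pm1$. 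Your Hilbert~90/normal basis sketch only handles $N/N_K$, and the phrase ``reduce to the faithful case on $K$'' does not explain what happens to $N_K$. The simplest example is $K=k$, $n=1$, $N=\langle\sigma\rangle$ with $\sigma(x)=-x$: here $y_1=x^2$ and $[e_{ij}]=[2]\notin GL_1(\bZ)$.

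The repair is short but different from what you wrote. One shows directly that $L$ is $G$-stable: since $N\trianglelefteq G$, the group $G$ permutes the $N$-invariant elements of $K^\times\cdot\langle x_1,\ldots,x_n\rangle$, so $\rho_{\underline{x}}(\sigma)$ restricts to an element of $GL(L)\cong GL_n(\bZ)$, and that restriction is your $[b_{i\ell}]$. The same observation---that $\sigma(y_i)$ is an $N$-invariant monomial, hence lies in $(K^N)^\times\cdot\langle y_1,\ldots,y_n\rangle$---also gives $c_i'(\sigma)\in(K^N)^\times$ immediately, replacing the component-by-component check you proposed (which as stated is not correct: $a_i$, $c_j(\sigma)$ need not individually lie in $(K^N)^\times$). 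With this fix your degree count $[K(x_1,\ldots,x_n):K^N(y_1,\ldots,y_n)]=[\bZ^n:L]\cdot[K:K^N]=|N_K|\cdot|N/N_K|=|N|$ goes through and (i)--(iii) follow as you outlined.
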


By Theorem \ref{thp12}, 
we may assume that $\rho_{\underline{x}}: G\rightarrow GL_n(\bZ)$ 
is injective and thus $G$ may be regarded as a finite subgroup of $GL_n(\bZ)$. 
The following theorem was already proved by Prokhorov \cite{Pro10} 
when $k=\bC$. %$k$ is an algebraically closed field. 
\begin{theorem}[{Hoshi, Kitayama, Yamasaki \cite{HKY11, Yam12}}]\label{thHKY}
Let $k$ be a field with {\rm char} $k\neq 2$ 
and $G$ be a finite subgroup of $GL_3(\bZ)$ acting on $K(x_1,x_2,x_3)$ 
by monomial $k$-automorphisms. 
Then $k(x_1,x_2,x_3)^G$ is $k$-rational 
except for the $8$ cases contained in \cite{Yam12} and one
additional case. For the last exceptional case, 
$k(x_1,x_2,x_3)^G$ is also $k$-rational except for a minor
situation. 
Moreover, there exists $L=k(\sqrt{a})$ with $a\in k^\times$ such that 
$L(x_1,x_2,x_3)^G$ is $L$-rational. 
In particular, if $k$ is %an algebraically 
a quadratically closed field, %with {\rm char} $k \neq 2$, 
then $k(x_1,x_2,x_3)^G$ is $k$-rational.
\end{theorem}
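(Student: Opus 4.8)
\medskip
\noindent\textbf{Proof strategy (sketch).}
The plan is to reduce the problem to a finite case analysis over conjugacy classes of finite subgroups of $GL_3(\bZ)$, settle the bulk of the cases by fibration and ``no-name''-type descent built on Theorems~\ref{thHaj87} and \ref{thHKHR}, and isolate the exceptional cases through unramified cohomology obstructions to retract rationality. Concretely: by Theorem~\ref{thp12} we may assume $\rho_{\underline{x}}\colon G\to GL_3(\bZ)$ is injective, so $G$ is, up to conjugacy, one of the finitely many (in fact $73$) finite subgroups of $GL_3(\bZ)$; fix such a $G$ together with its matrix representation $M\colon G\to GL_3(\bZ)$. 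A monomial action is then encoded by the coefficient system $\{c_j(\si)\in k^\times\}$, which the relation $(\si\ta)(x_j)=\si(\ta(x_j))$ forces to be a $1$-cocycle of $G$ with values in $(k^\times)^3$ for the action induced by $M$; rescaling $x_j\mapsto\la_j x_j$ ($\la_j\in k^\times$) changes it by a coboundary, so the relevant datum is a class in the corresponding $H^1$. The purely monomial subcase ($c_j(\si)=1$) is already $k$-rational by Theorem~\ref{thHKHR}, and over $k=\bC$ the whole statement is Prokhorov's theorem; the content is therefore the interplay between $M$ and a nontrivial cocycle over a general field of characteristic $\ne 2$.

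\smallskip
\noindent\emph{Generic cases by fibration.} Call $G$ \emph{reducible} if the $\bQ[G]$-module $\bQ^3$ is reducible. In that case $\bZ^3$ contains a $G$-stable direct summand $L$ of rank $1$ or $2$, which exhibits $k(x_1,x_2,x_3)^G$ as the function field of a fibration over a monomial invariant field in $\le 2$ variables for the quotient lattice $\bZ^3/L$ --- and that base is $k$-rational by Theorem~\ref{thHaj87} (the $2$-variable case) or trivially ($1$ variable). The generic fibre is a twisted form of $\mathbb{G}_m$ or of a $2$-dimensional torus; one finishes with the standard principles that a $\mathbb{G}_m$- or $\mathbb{P}^1$-fibration over a $k$-rational base carrying a rational section is $k$-rational, that a conic bundle over a $k$-rational surface with a $k$-point is $k$-rational, and that $2$-dimensional tori are $k$-rational (Theorem~\ref{thm:Vos}). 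The genuinely new point beyond Theorem~\ref{thHKHR} is to follow how the cocycle $(c_j(\si))$ twists these fibrations: one must show that, after a suitable multiplicative change of variables, a section survives, or else that the twist is absorbed into a norm-type torus of dimension $\le 2$, again rational. The irreducible groups form a short list and must be attacked individually, typically by exhibiting explicit generators of the invariant field or by reducing, via a clever substitution, to a $2$-dimensional monomial or toric situation.

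\smallskip
\noindent\emph{Exceptions and the borderline case.} For the groups not covered above one computes the unramified Brauer group $\Br_{\mathrm{nr}}$ of $k(x_1,x_2,x_3)^G$ over $k$ --- equivalently, the flabby/coflabby class of the pertinent $G$-module in the sense of Endo--Miyata and Voskresenskii, refined to account for the cocycle --- and produces a nonzero element for suitable $k$, obstructing even retract $k$-rationality; this cuts the list down to the $8$ cases of \cite{Yam12}. The remaining ``one additional case'' is genuinely arithmetic: its rationality turns on whether an explicit element of $k^\times$ built from the $c_j(\si)$ is a square in $k$ (equivalently a norm from a quadratic subextension), and splitting on this condition gives $k$-rationality outside a ``minor situation''. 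When $k$ is quadratically closed, all of these square-class and $H^2$-type obstructions vanish, so $k(x_1,x_2,x_3)^G$ is $k$-rational in every case.

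\smallskip
\noindent\emph{Main obstacle.} The real difficulty is twofold: organizing the $73$-fold enumeration so that the twisting coefficients are handled uniformly rather than group by group, and computing the unramified cohomology of the exceptional and borderline quotients over an arbitrary field of characteristic $\ne 2$ --- it is precisely there that one must pin down which arithmetic condition on $k$ and on the $c_j(\si)$ governs rationality, and where the ``minor situation'' of the last case emerges.
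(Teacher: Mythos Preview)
This theorem is not proved in the present paper; it is quoted from \cite{HKY11,Yam12} as background. So there is no ``paper's own proof'' to compare against here, but your sketch should still be measured against the actual argument in those references, whose methods this paper echoes throughout Sections~\ref{sepre}--\ref{seP2}.

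Your high-level framing (reduce to the $73$ conjugacy classes via Theorem~\ref{thp12}; encode the coefficients as a cocycle; treat purely monomial by Theorem~\ref{thHKHR}; isolate exceptions via retract-rationality obstructions) is sound. The gap is in the ``generic cases by fibration'' step. The assertions you lean on there are either false or far too coarse to close the argument: a conic bundle over a $k$-rational base with a $k$-point is \emph{not} in general $k$-rational (Iskovskikh's classification, Ch\^atelet surfaces, etc.), and the generic fibres you obtain are \emph{not} tori in the sense of Theorem~\ref{thm:Vos} once the twisting coefficients $c_j(\sigma)$ are present --- Voskresenskii's theorem applies to the purely quasi-monomial case with faithful Galois action, not to an arbitrary $2$-variable monomial action over a function-field base. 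Likewise ``a section survives after a suitable multiplicative change of variables'' is exactly the hard part: making that precise is the content of the case-by-case work, not something one can invoke.

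What \cite{HKY11,Yam12} actually do --- and what this paper mirrors --- is much more hands-on: for each $\bZ$-class they produce explicit transcendence bases by iterated changes of variables (cf.\ Lemmas~\ref{lem:tau1}--\ref{lemV42} here), reduce via Theorem~\ref{thAHK} to lower-dimensional monomial or quasi-monomial problems, and use the ``conjugacy classes move'' of \S\ref{subsec:conversion} to transport $\bZ$-irreducible cases to $\bZ$-reducible ones. The exceptional $8$ cases in \cite{Yam12} are pinned down by exhibiting explicit norm-residue (quaternion) classes in $\Br(k)$ built from the $c_j(\sigma)$, and the ``additional case'' is singled out the same way; the abstract unramified-Brauer/flabby-class language you invoke is the right conceptual home, but the proofs proceed by direct computation rather than by a uniform cohomological machine.
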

Indeed, for the exceptional $8$ cases in Theorem \ref{thHKY}, 
the necessary and sufficient condition for the $k$-rationality 
of $K(x_1,x_2,x_3)^G$ was given in terms of $k$ and $c_j(\sigma)$. 
In particular, if it is not $k$-rational, then it is not retract 
$k$-rational (see \cite{Yam12}). 
We also note that %for dimension four case 
there exists %a fixed field 
$\bC(x_1,x_2,x_3,x_4)^G$ 
under the monomial action of $G$ 
which is not retract $\bC$-rational (see \cite[Example 5.11]{CHKK10}). 

\vspace*{5mm}
%%%%%%%%%%%%%%%%%%%%%%%%%%%%%%%%%%
{\it Quasi-monomial action case.} 

When we consider general quasi-monomial actions, 
it turns out that non-unirational fields $K(x)^G$ appear 
%for general quasi-monomial actions 
even in the one-dimensional case. 
Let $(a,b)_{k}$ (resp. $[a,b)_k$) 
be the norm residue symbol of degree two over $k$ 
when char $k\neq 2$ (resp. char $k=2$), see \cite[Chapter 11]{Dra83}. 
For dimension one (resp. two), the rationality problem 
for quasi-monomial (resp. purely quasi-monomial) actions was solved by 
Hoshi, Kang and Kitayama \cite{HKK14}. 
\begin{theorem}[Hoshi, Kang, Kitayama {\cite[Proposition 1.13]{HKK14}}]\label{thHKKp113}
Let $k$ be a field.\\
{\rm (1)} Let $G$ be a finite group acting on $K(x)$ by purely 
quasi-monomial $k$-automorphisms.
Then $K(x)^G$ is $k$-rational.\\
{\rm (2)} Let $G$ be a finite group acting on $K(x)$ by
quasi-monomial $k$-automorphisms. Then $K(x)^G$ is $k$-rational
except for the following case: 
There is a normal subgroup
$N$ of $G$ such that {\rm (i)} $G/N=\langle \sigma \rangle \simeq
\cC_2$, {\rm (ii)} $K(x)^N=k(\alpha)(y)$ with $\alpha^2=a\in
K^{\times}$, $\sigma(\alpha)=-\alpha$ \rm{(}if 
${\rm char}$ $k\neq 2$\rm{)}, and $\alpha^2+\alpha=a\in K$, $\sigma(\alpha)=\alpha+1$
\rm{(}if ${\rm char} k=2$\rm{)}, {\rm (iii)} $\sigma\cdot y=b/y$
for some $b\in k^{\times}$.

For the exceptional case, $K(x)^G=k(\alpha) (y)^{G/N}$ is
$k$-rational if and only if the norm residue $2$-symbol
$(a,b)_k=0$ (if ${\rm char}$ $k\neq 2$), and $[a,b)_k=0$ (if
${\rm char}$ $k=2$).

Moreover, if $K(x)^G$ is not $k$-rational, then $k$ is an infinite
field, the Brauer group $\Br(k)$ is non-trivial, and $K(x)^G$
is not $k$-unirational.
\end{theorem}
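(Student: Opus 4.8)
The plan is to apply Theorem~\ref{thp12} with $n=1$ to reduce to a one-variable multiplicative action of a group of order at most two, and then to recognise the resulting fixed field as the function field of a conic. Theorem~\ref{thp12} yields a normal subgroup $N\trianglelefteq G$ with $K(x)^N=K^N(y)$, where $y$ is a monomial in $x$ over $K^\times$ (a power of $x$, with no coefficient, in case~(1)), such that $G/N$ acts quasi-monomially on $K^N(y)$ and $\rho_{\underline{y}}\colon G/N\hookrightarrow GL_1(\bZ)=\{\pm1\}$. Thus either $G/N$ is trivial, in which case $N=G$ and $K(x)^G=K^G(y)=k(y)$ is $k$-rational so that both assertions hold, or $G/N\simeq\cC_2$.

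Assume $G/N=\langle\si\rangle\simeq\cC_2$. Injectivity of $\rho_{\underline{y}}$ forces $\si(y)=c\,y^{-1}$ for some $c\in(K^N)^\times$, and $\si^2=\mathrm{id}$ gives $\si(c)=c$, so $c\in(K^N)^{\langle\si\rangle}=K^G=k^\times$; in case~(1) moreover $c=1$, since then $N$ fixes $x$ and $\si(x)=x^{-1}$ carries no coefficient. If $\si$ acts trivially on $K^N$, then $K^N=(K^N)^{\langle\si\rangle}=k$ and $K(x)^G=k(y)^{\langle\si\rangle}=k\bigl(y+c\,y^{-1}\bigr)$ is $k$-rational. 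If $\si$ acts non-trivially on $K^N$, then $K^N/k$ is a separable quadratic extension with Galois group $\langle\si\rangle$, hence $K^N=k(\al)$ with $\al^2=a\in k^\times$, $\si(\al)=-\al$ when $\mathrm{char}\,k\neq2$, and $\al^2+\al=a\in k$, $\si(\al)=\al+1$ when $\mathrm{char}\,k=2$. Writing $b:=c\in k^\times$, this is precisely the exceptional configuration of the statement, and it remains to decide the $k$-rationality of $F:=k(\al)(y)^{\langle\si\rangle}$.

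Here I would produce explicit $\si$-invariant generators of $F$. When $\mathrm{char}\,k\neq2$, put $u=\tfrac{1}{2}(y+b\,y^{-1})$ and $w=\tfrac{\al}{2}(y-b\,y^{-1})$; these lie in $F$ and satisfy $y=u+w/\al$, so $k(u,w)\subseteq F$ while $k(\al)(y)=k(u,w)(\al)$ has degree at most $2$ over $k(u,w)$ and degree $2$ over $F$; hence $F=k(u,w)$, and a direct computation gives the single relation $a\,u^2-w^2=a\,b$. When $\mathrm{char}\,k=2$, the analogous choice $s=y+b\,y^{-1}$, $t=\al y+(\al+1)b\,y^{-1}$ lies in $F$ and recovers $y=t+(\al+1)s$, giving $F=k(s,t)$ with $t^2+st+a\,s^2=b$ by the same argument. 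In either case $F=k(C)$ is the function field of a smooth conic $C$ over $k$ --- the Severi--Brauer conic of the quaternion class $(a,b)_k$ (resp.\ $[a,b)_k$) --- and since $k$ is algebraically closed in $k(C)$ and a conic with a rational point is $k$-isomorphic to $\mathbb{P}^1$, $F$ is $k$-rational if and only if $C(k)\neq\emptyset$, i.e.\ if and only if $(a,b)_k=0$ (resp.\ $[a,b)_k=0$). In case~(1) we have $b=1$, and $(a,1)_k$ (resp.\ $[a,1)_k$) is split since it contains the zero-divisor $j-1$ (resp.\ the nilpotent $j+1$); thus $F$ is always $k$-rational, completing part~(1).

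Finally, the last clause of~(2): if $F$ is not $k$-rational then $(a,b)_k$ (resp.\ $[a,b)_k$) is a non-trivial Brauer class, so $\Br(k)\neq0$; since $\Br(\bF)=0$ for every finite field $\bF$ by Wedderburn's theorem, $k$ is infinite; and a conic $C/k$ with $C(k)=\emptyset$ over an infinite field is not $k$-unirational, because restricting a unirational parametrisation to a general $k$-rational line produces a non-constant morphism $\mathbb{P}^1_k\to C$, hence a $k$-point of $C$, a contradiction. I expect the main work to lie in the uniform-in-characteristic identification of $F$ with the conic attached to $(a,b)_k$, together with the accompanying book-keeping (descent of $c$ to $k^\times$, separability of the quadratic subextension, and checking that the monomial shape of $y$ supplied by Theorem~\ref{thp12} is harmless); the rationality criterion for conic function fields and the unirationality step are then classical.
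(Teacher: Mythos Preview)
This theorem is quoted from \cite{HKK14} without proof in the present paper, so there is no in-paper argument to compare your attempt against. Your proof is correct and takes the expected route: Theorem~\ref{thp12} with $n=1$ forces $G/N\hookrightarrow GL_1(\bZ)=\{\pm1\}$, reducing everything to the action $\sigma(y)=b/y$ on $k(\alpha)(y)$ with $b\in k^\times$; the explicit invariants $u=\tfrac12(y+b/y)$, $w=\tfrac{\alpha}{2}(y-b/y)$ in odd characteristic (resp.\ $s=y+b/y$, $t=\alpha y+(\alpha+1)b/y$ in characteristic~$2$) realise the fixed field as the function field of the conic $au^2-w^2=ab$ (resp.\ $t^2+st+as^2=b$), whose class in $\Br(k)$ is $(a,b)_k$ (resp.\ $[a,b)_k$), and the standard equivalence between rationality of a conic and splitting of the associated quaternion algebra finishes the job. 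Your verification that $b=1$ in the purely quasi-monomial case, and that $(a,1)_k=0$ always, is exactly what is needed for part~(1).

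One minor comment: your justification of non-$k$-unirationality via ``restricting to a general line'' is more elaborate than necessary. It suffices to note that a dominant rational map $\mathbb{A}^n_k\dashrightarrow C$ is defined on a non-empty Zariski open set, which over an infinite field $k$ contains a $k$-point, and the image of that point is a $k$-point of $C$.
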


Let $\cS_n$ (resp. $\cA_n$, $\cD_n$, $\cC_n$) 
be the symmetric group (resp. the alternating group, the dihedral group, the cyclic group) 
of degree $n$ of order $n!$ (resp. $n!/2$, $2n$, $n$). 

\begin{theorem}[Hoshi, Kang, Kitayama {\cite[Theorem 1.14]{HKK14}}] 
\label{thm:HKK2}
Let $k$ be a field and 
$G$ be a finite group acting on $K(x,y)$ by purely
quasi-monomial $k$-automorphisms. 
Define $N=\{\sigma\in G:\sigma(x)=x,~ \sigma(y)=y\}$, 
$H=\{\sigma\in G:\sigma(\alpha)=\alpha$ for all $\alpha\in K\}$. 
Then $K(x,y)^G$ is $k$-rational except possibly for the following situation: 
\rm{(}1\rm{)} ${\rm char}$ $k\ne 2$ and \rm{(}2\rm{)}
$(G/N,HN/N)\simeq (\cC_4,\cC_2)$ or $(\cD_4,\cC_2)$.

More precisely, in the exceptional situation we may choose $u,v\in
k(x,y)$ satisfying that $k(x,y)^{HN/N}=k(u,v)$ (and therefore
$K(x,y)^{HN/N}=K(u,v)$) such that

{\rm (i)} when $(G/N,HN/N)\simeq (\cC_4,\cC_2)$,
$K^N=k(\sqrt{a})$ for some $a\in k\backslash k^2$, $G/N=\langle
\sigma \rangle \simeq \cC_4$, then $\sigma$ acts on $K^N(u,v)$ by
$\sigma: \sqrt{a} \mapsto -\sqrt{a}$, $u\mapsto \frac{1}{u}$,
$v\mapsto -\frac{1}{v}$; or 

{\rm (ii)} when
$(G/N,HN/N)\simeq (\cD_4,\cC_2)$, $K^N=k(\sqrt{a},\sqrt{b})$ is a
biquadratic extension of $k$ with $a,b\in k\backslash k^2$,
$G/N=\langle \sigma,\tau \rangle \simeq \cD_4$, then $\sigma$ and
$\tau$ act on $K^N(u,v)$ by $\sigma:\sqrt{a} \mapsto -\sqrt{a}$,
$\sqrt{b}\mapsto \sqrt{b}$, $u\mapsto\frac{1}{u}$, $v\mapsto
-\frac{1}{v}$, $\tau: \sqrt{a}\mapsto \sqrt{a}$, $\sqrt{b}\mapsto
-\sqrt{b}$, $u\mapsto u$, $v\mapsto -v$.

For Case {\rm (i)}, $K(x,y)^G$ is $k$-rational if and only if the
norm residue $2$-symbol $(a,-1)_k=0$. For Case {\rm (ii)},
$K(x,y)^G$ is $k$-rational if and only if $(a,-b)_k=0$.

Moreover, if $K(x,y)^G$ is not $k$-rational, then $k$ is an
infinite field, the Brauer group $\Br(k)$ is non-trivial, and
$K(x,y)^G$ is not $k$-unirational.
\end{theorem}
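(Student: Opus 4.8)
The plan is to combine the classification of finite subgroups of $GL_2(\bZ)$ with a descent along the kernel of the $G$-action on $K$. First I would apply Theorem \ref{thp12} to replace $G$ by $G/N$ and $x,y$ by suitable monomials (with trivial constants, since the action is purely quasi-monomial), so that $\rho_{\underline{x}}\colon G\hookrightarrow GL_2(\bZ)$ becomes injective; then $HN/N=H$, the group $G$ is $GL_2(\bZ)$-conjugate to one of the $13$ finite subgroups, $H\triangleleft G$ is the kernel of the action on $K$, and $K/k$ is $(G/H)$-Galois. The rationality of $K(x,y)^G=\bigl(K(x,y)^H\bigr)^{G/H}$ would then be attacked case by case in $(G,H)$.

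Two boundary cases are disposed of immediately. If $H=1$ then $G\simeq\mathrm{Gal}(K/k)$ acts faithfully on $K$ and purely quasi-monomially on $K(x,y)$, i.e.\ one has a two-dimensional algebraic $k$-torus split by $K$, which is $k$-rational by Theorem \ref{thm:Vos}. If $H=G$ then $K^G=k$ forces $K=k$, the action is monomial on $k(x,y)$, and $k(x,y)^G$ is $k$-rational by Theorem \ref{thHaj87}. So one may assume $1\subsetneq H\subsetneq G$, which restricts $G$ to (a conjugate of) $\cC_4,\cC_6,\cD_2,\cD_3,\cD_4$ or $\cD_6$. For each admissible $(G,H)$ I would first compute $K(x,y)^H$: as $H$ acts trivially on $K$, it acts on $K(x,y)$ by purely monomial $K$-automorphisms, so $K(x,y)^H$ is the invariant field of a two-dimensional purely monomial action, hence $K$-rational (Theorem \ref{thHaj87} with base field $K$); write $K(x,y)^H=K(u,v)$ with $u,v$ explicit in $x,y$. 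The residual group $G/H$ then acts on $K(u,v)$, faithfully on $K$.

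The core of the argument is to pick the coordinates $u,v$ so that the $G/H$-action on $K(u,v)$ is quasi-monomial, and then to decide its rationality. If the coordinates can be chosen so that the action is \emph{purely} quasi-monomial, $K(u,v)^{G/H}$ is again a two-dimensional $k$-torus and is $k$-rational by Theorem \ref{thm:Vos}; a lengthy but routine verification shows this is possible for every admissible $(G,H)$ apart from the two listed ones, and for the two listed ones when $\mathrm{char}\,k=2$ (there $v\mapsto-1/v$ is the monomial map $v\mapsto 1/v$). In the two remaining situations, $(G,H)\simeq(\cC_4,\cC_2)$ and $(\cD_4,\cC_2)$ with $\mathrm{char}\,k\ne2$, one shows the residual action can be brought into the normal forms (i) and (ii) of the statement, where it is genuinely not purely quasi-monomial. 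For these I would imitate, in one extra variable, the computation underlying Theorem \ref{thHKKp113}(2): after splitting off the $u$-part, which is a $\cC_2$-action of shape $\sqrt{a}\mapsto-\sqrt{a}$, $u\mapsto 1/u$ whose invariant field is $k$-rational because the symbol $(a,1)_k$ vanishes, one introduces an invariant $p$ and an anti-invariant $q$ with $q^2=p^2+4$ in case (i) and $q^2=p^2+4b$ in case (ii); then $\sqrt{a}\,q$ is invariant with $(\sqrt{a}\,q)^2=ap^2+4a$, resp.\ $ap^2+4ab$, exhibiting $K(x,y)^G$ as a conic over a $k$-rational field whose quaternion class is $(a,4a)_k=(a,-1)_k$, resp.\ $(a,4ab)_k=(a,-b)_k$ (using $(a,-a)_k=0$ and that $4$ is a square). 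Hence $K(x,y)^G$ is $k$-rational if and only if this conic has a $k$-rational point, i.e.\ if and only if the relevant symbol vanishes.

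For the final assertion: a nonvanishing norm residue $2$-symbol is a nonzero element of $\Br(k)$, so $\Br(k)\ne0$ and $k$ is infinite (finite fields have trivial Brauer group by Wedderburn), and a smooth conic over $k$ without a $k$-point is a nontrivial one-dimensional Severi--Brauer variety, hence not $k$-unirational, so $K(x,y)^G$ is not $k$-unirational. The step I expect to be the main obstacle is the exhaustive check that \emph{exactly} the two listed pairs are exceptional: for every other $(G,H)$ one must actually produce coordinates realising the $G/H$-action as purely quasi-monomial, and there are many $GL_2(\bZ)$-conjugacy classes of $G$ and choices of $H$ to run through; the genuinely delicate mathematics, by contrast, is the explicit coordinate change and Brauer-class computation in the $\cC_4$ and $\cD_4$ cases.
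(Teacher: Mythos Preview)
This theorem is not proved in the present paper: it is quoted verbatim from \cite[Theorem~1.14]{HKK14} and used as a black box throughout (see Theorem~\ref{th41}, Corollary~\ref{cor:AHK}, and the reductions in Subsections~\ref{subsec:3}--\ref{subsec:5}). There is therefore no proof here against which to compare your proposal.

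That said, your sketch is a faithful outline of how the result is actually established in \cite{HKK14}: reduction via Theorem~\ref{thp12} to $G\hookrightarrow GL_2(\bZ)$, the endpoint cases $H=1$ (Voskresenskii) and $H=G$ (Hajja), a case-by-case run through the finite subgroups of $GL_2(\bZ)$ with explicit generators for $K(x,y)^H$, and a conic computation in the two exceptional pairs. Two places deserve a little more care than you give them. First, ``splitting off the $u$-part'' is not a tensor decomposition; what one really does is linearise the action on $u$ (e.g.\ via $u'=(u-1)/(u+1)$) and then invoke Theorem~\ref{thAHK} or Theorem~\ref{thHK} to write $K(u,v)^{\langle\sigma\rangle}=K(v)^{\langle\sigma\rangle}(w)$ for a single transcendental $w$, reducing to the one-variable conic $K(v)^{\langle\sigma\rangle}$. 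Second, in the non-unirationality clause you conclude for the conic but the fixed field is the conic's function field with an extra transcendental; you should note that any $k$-unirational parametrisation of $F(w)$ restricts to one of $F$, and a smooth conic with no $k$-point is not $k$-unirational, which then blocks unirationality (hence rationality) of $F(w)$ as well.
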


The following definition gives an equivalent definition of
quasi-monomial actions. 
This definition follows the approach of Saltman's
definition of twisted multiplicative actions \cite{Sal90a, Sal90b}, 
\cite[Definition 2.2]{Kan09}.

\begin{definition}[{\cite[Definition 1.15]{HKK14}}] 
Let $G$ be a finite group. A $G$-lattice $M$ is a finitely
generated $\bZ[G]$-module which is $\bZ$-free as an abelian
group, i.e.\ $M=\bigoplus_{1\le i\le n} \bZ\cdot x_i$ with a
$\bZ[G]$-module structure. 
Let $K/k$ be a field extension such
that $G$ acts on $K$ with $K^G=k$. Consider a short exact sequence
of $\bZ[G]$-modules $\alpha: 1\to K^{\times}\to M_\alpha \to
M\to 0$ where $M$ is a $G$-lattice and $K^{\times}$ is regarded as
a $\bZ[G]$-module through the $G$-action on $K$. 
The $\bZ[G]$-module structure (written multiplicatively) of
$M_\alpha$ may be described as follows: 
For each $x_j\in M$ (where $1\le j\le n$), 
take a pre-image $u_j$ of $x_j$. 
As an abelian group, $M_\alpha$ is the direct product of $K^{\times}$
and $\langle u_1, \ldots, u_n \rangle$. If $\sigma \in G$ and 
$\sigma\cdot x_j=\sum_{1\le i\le n} a_{ij} x_i \in M$, we find 
that $\sigma\cdot u_j=c_j(\sigma) \cdot \prod_{1\le i\le n} 
u_i^{a_{ij}} \in M_\alpha$ for a unique $c_j(\sigma)\in K^{\times}$ 
determined by the group extension $\alpha$. 

Using the same idea, once a group extension 
$\alpha:1\to K^{\times}\to M_\alpha \to M\to 0$ is given, we may define a 
quasi-monomial action of $G$ on the rational function field 
$K(x_1,\ldots,x_n)$ as follows: 
If $\sigma\cdot x_j=\sum_{1\le i\le n} a_{ij} x_i \in M$, 
then define $\sigma\cdot 
x_j=c_j(\sigma)\prod_{1\le i\le n} x_i^{a_{ij}} \in 
K(x_1,\ldots,x_n)$ and $\sigma\cdot \alpha =\sigma(\alpha)$ for 
$\alpha\in K$ where $\sigma(\alpha)$ is the image of $\alpha$ 
under $\sigma$ via the prescribed action of $G$ on $K$. 
This quasi-monomial action is well-defined (see \cite[page 538]{Sal90a} 
for details). The field $K(x_1,\ldots,x_n)$ with such a $G$-action 
will be denoted by $K_\alpha(M)$ to emphasize the role of the 
extension $\alpha$; its fixed field is denoted as $K_\alpha(M)^G$. 
We will say that $G$ acts on $K_\alpha(M)$ by quasi-monomial $k$-automorphisms.

If $k=K$, then $k_\alpha(M)^G$ is nothing but the fixed field 
associated to the monomial action. 

If the extension $\alpha$ splits, then we may take 
$u_1,\ldots,u_n\in M_\alpha$ satisfying that 
$\sigma\cdot u_j=\prod_{1\le i\le n} u_i^{a_{ij}}$. 
Hence the associated quasi-monomial action of $G$ on $K(x_1,\ldots,x_n)$ 
becomes a purely quasi-monomial action. 
In this case, we will write 
$K_\alpha(M)$ and $K_\alpha(M)^G$ as $K(M)$ and $K(M)^G$
respectively (the subscript $\alpha$ is omitted because the
extension $\alpha$ plays no important role). We will say that $G$
acts on $K(M)$ by purely quasi-monomial $k$-automorphisms. 
Again $k(M)^G$ is the fixed field associated to the 
purely monomial action. 
\end{definition}

As an application of Theorem \ref{thm:HKK2}, we have the following theorems: 

\begin{theorem}[Hoshi, Kang, Kitayama {\cite[Theorem 1.16]{HKK14}}]
Let $k$ be a field, $G$ be a finite group and $M$ be a $G$-lattice 
with ${\rm rank}_{\bZ} M=4$ such that $G$ acts on $k(M)$ by 
purely monomial $k$-automorphisms. 
If $M$ is decomposable, i.e. $M=M_1\oplus M_2$ as $\bZ[G]$-modules where 
$1\le {\rm rank}_{\bZ} M_1 \le 3$, then $k(M)^G$ is $k$-rational.
\end{theorem}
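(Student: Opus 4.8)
The plan is to split off one lattice summand, reduce the rationality of $k(M)^G$ to a lower-dimensional quasi-monomial problem over a new base field, and then dispose of a short exceptional list by explicit computation. After renaming we may assume $r_1:=\mathrm{rank}_{\bZ}M_1\le r_2:=\mathrm{rank}_{\bZ}M_2$, so $(r_1,r_2)\in\{(1,3),(2,2)\}$. Choosing $\bZ$-bases of $M_1$ and $M_2$, the transition matrices are block diagonal because $M_1,M_2$ are $\bZ[G]$-submodules; hence $G$ stabilizes the subfield $k(M_2)$ and acts there by purely monomial $k$-automorphisms (a monomial action with $K=k$), while $G$ acts on $k(M)=k(M_2)(x_1,\dots,x_{r_1})$ by purely quasi-monomial automorphisms over $k':=k(M_2)^G$. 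By Theorem~\ref{thHKHR} (when $r_2=3$) or Theorem~\ref{thHaj87} (when $r_2=2$), the base field $k'$ is $k$-rational; so it suffices to prove that $k(M)^G$ is $k'$-rational.

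If $r_1=1$, then $k(M)=k(M_2)(x_1)$ carries a purely quasi-monomial $G$-action over $k'$, so $k(M)^G$ is $k'$-rational by Theorem~\ref{thHKKp113}(1), and we are done. If $r_1=r_2=2$, apply Theorem~\ref{thm:HKK2} to $K=k(M_2)$ with the two $M_1$-variables: $k(M)^G$ is $k'$-rational unless we are in the exceptional situation, i.e.\ $\mathrm{char}\,k\neq2$ and $(G/N,HN/N)\simeq(\cC_4,\cC_2)$ or $(\cD_4,\cC_2)$, where $N=\ker(G\to GL(M_1))$ and $H=\ker(G\to GL(M_2))$. If this situation does not occur we are finished; otherwise I would exchange the roles of $M_1$ and $M_2$ (the new base $k(M_1)^G$ is again $k$-rational by Theorem~\ref{thHaj87}) and repeat the argument. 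So the only remaining case is that \emph{both} splittings are exceptional.

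In this doubly-exceptional case the group is pinned down. Since $G$ acts faithfully on $M=M_1\oplus M_2$ we have $N\cap H=1$, hence $HN/N\cong H$, and $G\hookrightarrow\overline{G}_1\times\overline{G}_2$ with $\overline{G}_i=\mathrm{im}(G\to GL(M_i))$. The two exceptionality conditions force $\overline{G}_1,\overline{G}_2\in\{\cC_4,\cD_4\}$, $|N|=|H|=2$, and $|G|=2|\overline{G}_1|=2|\overline{G}_2|$; thus either $|G|=8$ with $\overline{G}_1=\overline{G}_2=\cC_4$, which forces $G\cong\cC_4\times\cC_2$, or $|G|=16$ with $\overline{G}_1=\overline{G}_2=\cD_4$, a short explicit list (such as $\cD_4\times\cC_2$). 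As $\cC_4$ and $\cD_4$ each embed in $GL_2(\bZ)$ in only finitely many ways up to conjugacy, there remain finitely many $G$-lattices $M_1\oplus M_2$, and the plan is to treat each by exhibiting explicit generators of $k(M)^G$. The mechanism, which I have verified on the $\cC_4\times\cC_2$ case, is: first descend by the order-two direct factor and by the square of an order-four element, reducing to a single involution on a four-variable rational field; then absorb the sign coefficients — which are the source of the norm-residue obstruction over $k'$ — by replacing the two anti-invariant generators, one arising from $M_1$ and one from $M_2$, by their ratio; the resulting fixed field is the function field of a conic over a purely transcendental field which carries an obvious rational point, and is therefore rational.

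The genuine obstacle is exactly this doubly-exceptional analysis, because it is where $k'$-rationality can and does fail: the norm-residue symbol $(a,-1)_{k'}$ (resp.\ $(a,-b)_{k'}$) attached to the semi-invariant square $a\in k'=k(M_2)^G$ need not be trivial over $k'$, so Theorem~\ref{thm:HKK2} cannot be used to conclude, and $k$-rationality must instead be obtained directly by the computations above. Enumerating the finitely many triples $(G,M_1,M_2)$ without omission and carrying out each explicit rationality verification is the bulk of the work; everything preceding it is a routine application of the one-, two-, and three-dimensional results already available.
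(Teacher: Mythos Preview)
This theorem is not proved in the present paper; it is quoted verbatim from \cite[Theorem~1.16]{HKK14} as a known result, introduced only with the remark that it is ``an application of Theorem~\ref{thm:HKK2}''. There is therefore no proof here against which to compare your proposal.

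That said, your strategy is precisely the one this remark points to, and it is the natural one: view $k(M)=K(M_1)$ with $K=k(M_2)$, apply the one- and two-dimensional purely quasi-monomial results (Theorems~\ref{thHKKp113} and~\ref{thm:HKK2}) over $k'=K^G$, and use Theorems~\ref{thHaj87} and~\ref{thHKHR} to conclude that $k'$ is $k$-rational. Your identification of the residual difficulty is also correct: in the doubly-exceptional $(2,2)$ situation the norm-residue symbol of Theorem~\ref{thm:HKK2} is computed over $k'=k(M_2)^G$, where it need not vanish, so one cannot finish by quoting that theorem and must instead produce explicit generators for $k(M)^G$. Your group-theoretic reduction of this case to $G\simeq\cC_4\times\cC_2$ (order~$8$) or a short list of order-$16$ groups with $G/N\simeq G/H\simeq\cD_4$ is sound. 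However, your proposal stops at the outline stage: you report having verified only the $\cC_4\times\cC_2$ case, and you do not enumerate the order-$16$ possibilities or the finitely many $GL_2(\bZ)$-conjugacy classes of lattices involved. That explicit enumeration and case-by-case verification is exactly the work carried out in \cite{HKK14}, and without it the argument is a plan rather than a proof.
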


\begin{theorem}[Hoshi, Kang, Kitayama {\cite[Theorem 6.2]{HKK14}}]\label{th62}
Let $k$ be a field, $G$ be a finite group and $M$ be a $G$-lattice 
such that $G$ acts on $k(M)$ by purely monomial $k$-automorphisms. 
Assume that 
{\rm (i)} $M=M_1\oplus M_2$ as $\bZ[G]$-modules where 
${\rm rank}_{\bZ}M_1=3$ and ${\rm rank}_{\bZ} M_2=2$, 
{\rm (ii)} either $M_1$ or $M_2$ is a faithful $G$-lattice. 
Then $k(M)^G$ is $k$-rational except the following situation: 
{\rm char} $k\ne 2$, $G=\langle\sigma,\tau\rangle \simeq \cD_4$ 
and $M_1=\bigoplus_{1\le i\le 3} 
\bZ x_i$, $M_2=\bigoplus_{1\le j\le 2} \bZ y_j$ such that 
$\sigma:x_1\leftrightarrow x_2$, $x_3\mapsto -x_1-x_2-x_3$, 
$y_1\mapsto y_2\mapsto -y_1$, $\tau: x_1\leftrightarrow x_3$, 
$x_2\mapsto -x_1-x_2-x_3$, $y_1\leftrightarrow y_2$ where the 
$\bZ[G]$-module structure of $M$ is written additively. 
For the exceptional case, $k(M)^G$ is not retract $k$-rational. 
\end{theorem}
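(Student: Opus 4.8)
The plan is to reduce this five-dimensional problem to the two-dimensional purely quasi-monomial rationality problem of Theorem~\ref{thm:HKK2}, by using a rank-three summand of $M$ as a field of coefficients. The basic reduction is the following claim: \emph{if $M=M'\oplus M''$ as $\bZ[G]$-modules with ${\rm rank}_\bZ M'=3$, ${\rm rank}_\bZ M''=2$, and $M'$ is faithful, then $k(M)^G$ is $k$-rational}. Indeed, put $L:=k(M')$, on which $G$ acts by purely monomial $k$-automorphisms; by Theorem~\ref{thHKHR} the field $F:=L^G=k(M')^G$ is $k$-rational, and writing $k(M)=L(z_1,z_2)$ with $z_1,z_2$ multiplicative coordinates for $M''$, the group $G$ acts on $L(z_1,z_2)$ by purely quasi-monomial automorphisms over $F$. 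Apply Theorem~\ref{thm:HKK2} to this action, its base field being $F$: the subgroup $H=\{\sigma\in G:\sigma|_L={\rm id}\}=\ker(G\to GL(M'))$ is trivial since $M'$ is faithful, hence so is $HN/N$, and one is never in the exceptional situation of that theorem; therefore $k(M)^G=L(z_1,z_2)^G$ is $F$-rational, and so $k$-rational since $F$ is.

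Next I would use hypothesis~(ii). If $M_1$ is faithful, it is itself such a summand and we are done. So suppose $M_2$ is faithful (hence $G$ embeds into $GL_2(\bZ)$) while $M_1$ is not. If $M_1$ had a rank-one direct summand $L_0$, then $L_0\oplus M_2$ would be a faithful rank-three direct summand of $M$ and we would again be done; hence $M_1$ is an \emph{indecomposable} $G$-lattice of rank $3$, and $H_1:=\ker(G\to GL(M_1))\neq1$. Splitting off $M_1$ and applying Theorem~\ref{thm:HKK2} exactly as above (now with base field $F=k(M_1)^G$, again $k$-rational by Theorem~\ref{thHKHR}, with $\ker(G\to GL(M_2))=1$ and with $H_1$ in the role of $H$), we conclude that $k(M)^G$ is $k$-rational unless ${\rm char}\,k\neq2$ and $(G,H_1)\simeq(\cC_4,\cC_2)$ or $(\cD_4,\cC_2)$. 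The first possibility is vacuous: there $G/H_1\simeq\cC_2$ would act faithfully on the indecomposable rank-three lattice $M_1$, whereas $\cC_2$ has no indecomposable $\bZ$-lattice of rank $3$. So only $G\simeq\cD_4$ with $H_1=Z(G)\simeq\cC_2$ and $G/Z(G)\simeq\cC_2\times\cC_2$ acting faithfully on $M_1$ survives; by Jordan--Zassenhaus there are finitely many such $M_1$, and $M_2$ is one of the finitely many faithful rank-two $\cD_4$-lattices.

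For these remaining configurations Theorem~\ref{thm:HKK2} puts us in its Case~(ii): one writes $k(M_1)=F(\sqrt a,\sqrt b)$ for suitable $a,b\in F^\times$ on which $\sigma$ and $\tau$ act as prescribed there, and $k(M)^G$ is $F$-rational (hence $k$-rational) if and only if the norm residue symbol $(a,-b)_F$ vanishes. Computing $F=k(M_1)^{\cC_2\times\cC_2}$ and the pair $(a,b)$ explicitly for each faithful indecomposable rank-three $\cC_2\times\cC_2$-lattice $M_1$, I expect to find $(a,-b)_F=0$ in every configuration except the one in the statement. In that case $M_1$ is the lattice with $\sigma:x_1\leftrightarrow x_2$, $x_3\mapsto-x_1-x_2-x_3$ and $\tau:x_1\leftrightarrow x_3$, $x_2\mapsto-x_1-x_2-x_3$ (note that $\sigma^2$ then acts trivially on $M_1$, as is forced), and the corresponding $(a,-b)_F$ turns out to be a nonzero, ramified element of $\Br(F)$ for every field $k$ with ${\rm char}\,k\neq2$. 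By the last part of Theorem~\ref{thm:HKK2}, $k(M)^G$ is then not $F$-rational, and indeed not $F$-unirational; since $F$ is purely transcendental over $k$, this obstruction persists for every base field of characteristic $\neq2$, so $k(M)^G$ is not retract $k$-rational, as claimed.

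The main obstacle is this last family of cases. One has to carry out the invariant-theoretic computation of $k(M_1)^{\cC_2\times\cC_2}$ and of the pair $(a,b)$ for each indecomposable rank-three faithful $\cC_2\times\cC_2$-lattice $M_1$, verify that $(a,-b)_F=0$ in all configurations but the exceptional one, and there identify the obstruction as a genuinely non-split quaternion algebra over the rational function field $F$, uniformly over base fields of characteristic $\neq2$; making sure that the resulting non-retract-rationality descends from $F$ down to $k$ is the other delicate point. Everything else is formal, being a direct application of Theorems~\ref{thHKHR} and~\ref{thm:HKK2} together with elementary facts about direct-sum decompositions of $G$-lattices.
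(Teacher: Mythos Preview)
This theorem is quoted in the present paper as a result of \cite[Theorem~6.2]{HKK14}; the paper itself does not supply a proof, so there is no argument here to compare your proposal against directly. That said, your reduction strategy---splitting off a rank-three summand, invoking Theorem~\ref{thHKHR} to make the coefficient field $F=k(M')^G$ rational, and then applying Theorem~\ref{thm:HKK2} over $F$---is exactly the natural approach and matches what one finds in \cite{HKK14} for the positive (rationality) direction. Your case analysis isolating the $(\cD_4,\cC_2)$ configuration is correct in outline; the enumeration of indecomposable faithful rank-three $\cC_2\times\cC_2$-lattices and the verification of $(a,-b)_F=0$ in all but the exceptional case is indeed a finite check (the exceptional $M_1$ is the lattice $G_{3,1,4}=\langle\tau_3,\lambda_3\rangle$ in the notation of Section~\ref{sec:notation}), though you have not carried it out.

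The real gap is in the negative direction. Your passage from ``$k(M)^G$ is not $F$-unirational'' (which is what Theorem~\ref{thm:HKK2} gives over the base field $F$) to ``$k(M)^G$ is not retract $k$-rational'' is not justified, and the implicit principle you invoke---that an obstruction over a purely transcendental extension $F/k$ descends---is false in general. For a toy illustration, take $k=\bQ$, $F=\bQ(t^2)\subset L=\bQ(t)$: then $L$ is $k$-rational (hence retract $k$-rational) yet $L$ is not $F$-unirational, since $t^2$ is not a square in any purely transcendental extension of $F$. So non-$F$-unirationality of $k(M)^G$ tells you nothing about retract $k$-rationality. The argument in \cite{HKK14} for the exceptional case proceeds differently, via a birational invariant of $k(M)^G$ over $k$ (the flabby class of the $G$-lattice $M$, equivalently an unramified cohomological obstruction), which is exactly what is needed to rule out retract $k$-rationality; this cannot be extracted from Theorem~\ref{thm:HKK2} alone. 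You should replace the last paragraph by such an invariant-theoretic argument.
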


The aim of this paper is to investigate the rationality problem 
of $K(x_1,x_2,x_3)^G$ for purely quasi-monomial $k$-automorphisms 
of dimension three. 
The followings are main results of this paper. 

Note that (i) by Theorem \ref{thp12}, 
we may assume $N=\{\sigma\in G:\sigma(x_i)=x_i$ $(i=1,2,3)\}=1$ 
and hence $G$ may be regarded as a finite subgroup of $GL_3(\bZ)$; 
(ii) when $H=1$ the answer to the rationality problem in dimension $3$ 
was given by Kunyavskii (see Theorem \ref{thm:Kun}). 

There exist $73$ finite subgroups $G_{i,j,k}$ $(1\leq i\leq 7)$ 
contained in $GL_3(\bZ)$ up to conjugation 
which are classified into $7$ crystal systems 
(see Section \ref{sec:notation} for details). 

\begin{theorem}[The groups $G$ do not belong to 
the $7$th crystal system in dimension $3$]\label{thmain1} 
Let $k$ be a field with {\rm char} $k\neq 2$ 
and $G$ be a finite subgroup of $GL_3(\bZ)$ acting on $K(x_1,x_2,x_3)$ 
by purely quasi-monomial $k$-automorphisms. 
Assume that $G$ does not belong to 
the $7$th crystal system in dimension $3$ 
and $H=\{\sigma\in G\mid \sigma(\alpha)=\alpha\ {\rm for\ any\ } \alpha\in K\}\neq 1$.\\
{\rm (1)} If $G$ does not belong to 
the $4$th crystal system in dimension $3$, i.e. 
$G$ is either not a $2$-group or a $2$-group of exponent $2$, 
then $K(x_1,x_2,x_3)^G$ is $k$-rational;\\
{\rm (2)} When $G$ belongs to the $4$th crystal system 
in dimension $3$, i.e. $G$ is a $2$-group of exponent $4$, 
$G$ is $\bQ$-conjugate to one of the following $8$ groups: 
$\langle\pm\caa\rangle\simeq\cC_4$, 
$\langle\caa,-I_3\rangle\simeq\cC_4\times\cC_2$, 
$\langle\pm\caa,\pm\la_1\rangle\simeq\cD_4$, 
$\langle\caa,\la_1,-I_3\rangle\simeq\cD_4\times\cC_2$ where 
\[
\caa=\begin{bmatrix} 0&-1&0\\1&0&0\\0&0&1 \end{bmatrix},\ 
\la_1=\begin{bmatrix} -1&0&0\\0&1&0\\0&0&-1 \end{bmatrix} 
\]
and $I_3$ is the $3\times 3$ identity matrix. 
Then $K(x_1,x_2,x_3)^G$ is $k$-rational except possibly for the 
following cases 
with $H=\langle\caa^2\rangle$ or $\langle\caa^2,-I_3\rangle$:

{\rm (i)} 
The case where $H=\langle\caa^2\rangle\simeq \cC_2$. 

{\rm (i-1)} 
When $G$ is $\bQ$-conjugate to $\langle\pm\caa\rangle$, 
$($resp. $\langle\caa,-I_3\rangle)$
we may take $K=k(\sqrt{a})$ 
$($resp. $K^{\langle -I_3\rangle}=k(\sqrt{a}))$ 
on which $G$ acts by $\pm\caa : \sqrt{a}\mapsto -\sqrt{a}$, 
and $K(x_1,x_2,x_3)^G$ is $k$-rational if and only if 
$(a,-1)_k=0$. 

{\rm (i-2)} 
When $G$ is $\bQ$-conjugate to 
$\langle\pm\caa,\pm\la_1\rangle$ 
$($resp. $\langle\caa,\la_1,-I_3\rangle)$
we may take $K=k(\sqrt{a},\sqrt{b})$ 
$($resp. $K^{\langle -I_3\rangle}=k(\sqrt{a},\sqrt{b}))$ on which 
$G$ acts by $\pm\caa : \sqrt{a}\mapsto -\sqrt{a}, \sqrt{b}\mapsto \sqrt{b}$, 
$\pm\la_1 : \sqrt{a}\mapsto\sqrt{a}, \sqrt{b}\mapsto-\sqrt{b}$, 
and $K(x_1,x_2,x_3)^G$ is $k$-rational if and only if $(a,-b)_k=0$. 

{\rm (ii)} 
The case where $H=\langle\caa^2,-I_3\rangle\simeq \cC_2\times\cC_2$. 

{\rm (ii-1)} 
When $G$ is $\bQ$-conjugate to $\langle\caa,-I_3\rangle$, 
we may take $K=k(\sqrt{a})$  
on which $G$ acts by $\caa : \sqrt{a}\mapsto -\sqrt{a}$, 
and $K(x_1,x_2,x_3)^G$ is $k$-rational if and only if 
$(a,-1)_k=0$. 

{\rm (ii-2)}
When $G$ is $\bQ$-conjugate to $\langle\caa,\la_1,-I_3\rangle$, 
we may take $K=k(\sqrt{a},\sqrt{b})$ on which 
$G$ acts by $\caa : \sqrt{a}\mapsto -\sqrt{a}, \sqrt{b}\mapsto \sqrt{b}$, 
$\la_1 : \sqrt{a}\mapsto\sqrt{a}, \sqrt{b}\mapsto-\sqrt{b}$, 
and $K(x_1,x_2,x_3)^G$ is $k$-rational if and only if 
$(a,-b)_k=0$. 

Moreover, if $K(x_1,x_2,x_3)^G$ is not $k$-rational, then $k$ is an
infinite field, the Brauer group $\Br(k)$ is non-trivial, and
$K(x_1,x_2,x_3)^G$ is not $k$-unirational.

In particular, the $k$-rationality of $K(x_1,x_2,x_3)^G$ 
does not depend on the $\bQ$-conjugacy class of $G$ and 
the sign of $\pm\caa$ and $\pm\la_1$. 
\end{theorem}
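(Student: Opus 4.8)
The plan is to analyse the exact sequence $1 \to H \to G \to \bar{G} \to 1$, where $\bar{G} = G/H \simeq {\rm Gal}(K/k)$ and, since $H$ fixes $K$ pointwise and acts on $K(x_1,x_2,x_3)$ through $\rho_{\underline{x}}\colon G \hookrightarrow GL_3(\bZ)$, the subgroup $H$ acts on $K(x_1,x_2,x_3)$ by purely monomial $K$-automorphisms. Accordingly I would compute $K(x_1,x_2,x_3)^G = \bigl(K(x_1,x_2,x_3)^H\bigr)^{\bar{G}}$ in two stages: first describe $L := K(x_1,x_2,x_3)^H$, which is $K$-rational by Theorem \ref{thHKHR}, together with the induced $\bar{G}$-action, and then apply the rationality results available in smaller rank or simpler shape (Theorems \ref{thm:Vos}, \ref{thHaj87}, \ref{thm:HKK2}, \ref{thHKKp113}, \ref{th62}). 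The possible pairs $(G,H)$ up to conjugacy are finite in number: $H$ ranges over the normal subgroups of each relevant $G$ among the $73$ conjugacy classes of finite subgroups of $GL_3(\bZ)$ (Section \ref{sec:notation}), and the argument is organised by crystal system. The cases $H=G$ (purely monomial, done by Theorem \ref{thHKHR}) and $H=1$ (done by Kunyavskii, Theorem \ref{thm:Kun}) being known, one is left with $1 \neq H \subsetneq G$.

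For part (1), i.e.\ $G$ outside the $4$th and $7$th crystal systems, $G$ is either not a $2$-group or an elementary abelian $2$-group, and in each such case I would choose $H$-invariant Laurent monomials $y_1,y_2,y_3$ (or fewer, after splitting off the part of the $\bZ[H]$-lattice $M=\bZ x_1\oplus\bZ x_2\oplus\bZ x_3$ on which $H$ acts trivially, which lets one peel off a variable by the no-name lemma) so that the $\bar{G}$-action on $L$ becomes a quasi-monomial action in rank $\le 3$ that is either again purely monomial or reduces, after one more no-name step, to a purely quasi-monomial action in rank $\le 2$. The point is that the only rank-$2$ situations in which Theorem \ref{thm:HKK2} fails to give $k$-rationality are the $(\cC_4,\cC_2)$ and $(\cD_4,\cC_2)$ situations, and these can only come from an order-$4$ element acting genuinely on a rank-$2$ summand, i.e.\ they do not occur in the reduction when $G$ avoids the tetragonal ($4$th) system; hence under the hypothesis of part (1) one always lands in a $k$-rational case.

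For part (2) I would first show, by a direct computation of $\bQ$-conjugacy classes inside $GL_3(\bZ)$ (equivalently, from the classification of the tetragonal groups), that a $2$-group of exponent $4$ not in the $7$th system is $\bQ$-conjugate to one of the eight groups built from $\caa$ and $\la_1$, all of which preserve the splitting $\bZ^3=(\bZ x_1\oplus\bZ x_2)\oplus\bZ x_3$ with $\caa$ the order-$4$ rotation of the plane. Running the case analysis over the normal subgroups $H$ of each of these eight groups, whenever $\caa^2\notin H$ (so that after taking $H$-invariants no $(\cC_4,\cC_2)/(\cD_4,\cC_2)$ obstruction appears, or $H\subseteq\langle -I_3\rangle$ and the reduction of part (1) applies) one again gets $k$-rationality from the rank $\le 2$ theorems. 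When $H=\langle\caa^2\rangle$ or $H=\langle\caa^2,-I_3\rangle$, I would exhibit an explicit change of variables presenting $K(x_1,x_2,x_3)^H$ — or a further fixed subfield in the $\langle\caa^2,-I_3\rangle$ case, which accounts for the passage from $K$ to $K^{\langle -I_3\rangle}$ in the statement — as $K'(u,v)$ carrying exactly the action $\sqrt{a}\mapsto-\sqrt{a}$, $u\mapsto 1/u$, $v\mapsto -1/v$ (or its $\cD_4$ analogue with $K'=k(\sqrt a,\sqrt b)$) of cases (i) and (ii) of Theorem \ref{thm:HKK2}, with $a$ (resp.\ $a,b$) read off from the action of $\bar{G}$ on $K$; the criterion $(a,-1)_k=0$ (resp.\ $(a,-b)_k=0$) and the final assertion on non-$k$-unirationality and non-triviality of $\Br(k)$ are then inherited verbatim from Theorem \ref{thm:HKK2}. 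The independence of the $k$-rationality from the $\bQ$-conjugacy class of $G$ and from the signs of $\pm\caa,\pm\la_1$ follows because the extracted invariants $a$ (or $a,b$) and the explicit models above are unchanged under replacing $G$ by a $\bZ$-conjugate within its $\bQ$-class and under the substitutions $\caa\mapsto-\caa$, $\la_1\mapsto-\la_1$.

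The main obstacle is the bookkeeping in the two reduction stages: for every pair $(G,H)$ one must produce an explicit generating set of $H$-invariant monomials (or the appropriate flasque/invertible-lattice splitting) on which $\bar{G}$ acts transparently enough to invoke a prior theorem, and verify that, outside the four flagged cases, one never falls into an exceptional situation of Theorem \ref{thm:HKK2} or Theorem \ref{th62}. The most delicate point is the precise matching with Theorem \ref{thm:HKK2} in part (2)(i)--(ii): getting the signs in $u\mapsto 1/u$, $v\mapsto -1/v$ exactly right and identifying the correct quadratic (resp.\ biquadratic) subextension of $K$ carrying the symbol $(a,-1)_k$ (resp.\ $(a,-b)_k$), which is where the norm residue conditions — and hence the only possible failures of $k$-rationality — genuinely enter.
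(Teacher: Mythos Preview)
Your two-stage plan (pass to $K(x_1,x_2,x_3)^H$, then handle the residual $\bar G$-action) matches the paper's backbone, and the identification of the exceptional cases with the $(\cC_4,\cC_2)/(\cD_4,\cC_2)$ situations of Theorem~\ref{thm:HKK2} is exactly right.  But there is a real gap in how you treat the $\bZ$-irreducible groups in the 3rd~(II) and 4th~(II) systems and, correspondingly, in your final $\bQ$-invariance assertion.  Your statement that the tetragonal groups ``all preserve the splitting $\bZ^3=(\bZ x_1\oplus\bZ x_2)\oplus\bZ x_3$'' is only true for the eight $\bZ$-reducible representatives $G_{4,j,1}$, $G_{4,6,2}$; the other eight groups $G_{4,j,2}$, $G_{4,6,3}$, $G_{4,6,4}$ are $\bZ$-irreducible and merely $\bQ$-conjugate to them.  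Replacing $G$ by a $\bQ$-conjugate genuinely changes the purely quasi-monomial action on $K(x_1,x_2,x_3)$, and it is not automatic that the fixed fields have the same rationality status.  The paper bridges this with an explicit conjugacy-move device (Theorem~\ref{lem:conversion0}, Lemma~\ref{lem:conversion}): writing $G'=P_1^{-1}GP_1$ for a specific integer matrix $P_1$ of determinant~$2$, one has $K(x_1,x_2,x_3)^{G'}\cong\bigl(K(x_1,x_2,x_3)^{G}\bigr)^{\langle\rho_1\rangle}$ where $\rho_1:x_i\mapsto -x_i$, and then one checks, for each $H$, that passing to $\rho_1$-invariants preserves $k$-rationality and, in the exceptional cases, produces the \emph{same} norm-residue symbol.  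This computation is what actually establishes the last sentence of the theorem, and it is absent from your plan; the sentence ``the explicit models above are unchanged under replacing $G$ by a $\bZ$-conjugate within its $\bQ$-class'' is precisely the nontrivial thing to be proved.

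A smaller point: your dichotomy in part~(2) between ``$\caa^2\notin H$'' and ``$H\in\{\langle\caa^2\rangle,\langle\caa^2,-I_3\rangle\}$'' omits the several normal subgroups with $\langle\caa^2\rangle\subsetneq H$ (e.g.\ $\langle\caa^2,\pm\lambda_1\rangle$, $\langle\caa^2,\pm\caa\lambda_1\rangle$, $\langle\pm\caa\rangle$, $\langle\caa,-I_3\rangle$, $\langle\pm\caa,\pm\lambda_1\rangle$).  For each of these the paper produces explicit $H$-invariants on which the residual $G/H$-action becomes linear and concludes via Theorem~\ref{thEM} or Theorem~\ref{thHK}; these do not reduce to a uniform rank~$\le2$ statement and each needs its own change of variables.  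By contrast, for the many $\bZ$-reducible groups outside the 4th system the paper does \emph{not} enumerate $H$ at all: it peels off $x_3$ once via Corollary~\ref{cor:AHK} and invokes Theorem~\ref{thm:HKK2} directly, which is both shorter than your plan and what makes transparent that no obstruction can arise there.
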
 

There exist $15$ finite subgroups $G_{7,j,k}$ 
$(1\leq j\leq 5, 1\leq k\leq 3)$ of $GL_3(\bZ)$ 
which belong to the $7$th crystal system (see Section \ref{sec:notation}). 

\begin{theorem}[The groups $G$ belong to the $7$th crystal system in dimension $3$]
\label{thmain2}
Let $k$ be a field with {\rm char} $k\neq 2$ 
and $G$ be a finite subgroup of $GL_3(\bZ)$ acting on 
$K(x_1,x_2,x_3)$ by purely quasi-monomial $k$-automorphisms.
Assume that $G=G_{7,j,k}$ 
belongs to the $7$th crystal system in dimension $3$ 
and $H=\{\sigma\in G\mid \sigma(\alpha)=\alpha\ {\rm for\ any\ } \alpha\in K\}\neq 1$.\\
{\rm (1)} If $G=G_{7,j,1}$ $(1\leq j\leq 5)$, $G_{7,j,2}$ $(1\leq j\leq 5)$ or 
$G_{7,j,3}$ $(j=1,4)$, 
then $K(x_1,x_2,x_3)^G$ is $k$-rational;\\
{\rm (2)} When $G=G_{7,j,3}$ $(j=2,3,5)$, 
$G_{7,2,3}=\langle \ta_3,\la_3,\cb,-I_3\rangle$ $\simeq$ $\cA_4\times \cC_2$, 
$G_{7,3,3}=\langle \ta_3,\la_3,\cb,-\be_3\rangle$ $\simeq$ $\cS_4$, 
$G_{7,5,3}=\langle \ta_3,\la_3,\cb,\be_3,-I_3\rangle$ $\simeq$ $\cS_4\times \cC_2$ 
where 
\begin{align*}
\ta_3=\left[\begin{array}{ccc}0 & 1 & -1\\ 1 & 0 & -1\\ 0 & 0 & -1\end{array}\right],\ 
\la_3=\left[\begin{array}{ccc}0 & -1 & 1\\ 0 & -1 & 0\\ 1 & -1 & 0\end{array}\right],\ 
\cb=\left[\begin{array}{ccc} 0 & 0 & 1\\ 1 & 0 & 0\\ 0 & 1 & 0\end{array}\right],\ 
\be_3=\left[\begin{array}{ccc}1 & 0 & -1\\ 0 & 1 & -1\\ 0 & 0 & -1\end{array}\right]
\end{align*}
and $I_3$ is the $3\times 3$ identity matrix. 
Then $K(x_1,x_2,x_3)^G$ is 
$k$-rational except possibly for the following cases with 
$H=\langle\ta_3,\la_3\rangle$ or $\langle\ta_3,\la_3,\cb\rangle$: 

{\rm (i)} 
The case where $H=\langle\ta_3,\la_3\rangle\simeq \cC_2\times \cC_2$. 
We have 
\begin{align*}
K(x_1,x_2,x_3)^{G_{7,2,3}}&=K(u_1,u_2,u_3)^{\langle\cb,-I_3\rangle},\\ 
K(x_1,x_2,x_3)^{G_{7,3,3}}&=K(u_1,u_2,u_3)^{\langle\cb,-\beta_3\rangle},\\ 
K(x_1,x_2,x_3)^{G_{7,5,3}}&=K(u_1,u_2,u_3)^{\langle\cb,\beta_3,-I_3\rangle}
\end{align*}
where $K(u_1,u_2,u_3)=K(x_1,x_2,x_3)^H$ and 
\begin{align*}
\cb &: 
u_1\mapsto  u_2,\ u_2\mapsto  u_3,\ u_3\mapsto u_1,\\
-\be_3 &: 
u_1\mapsto \frac{-u_1+u_2+u_3}{u_2u_3},\ 
u_2\mapsto \frac{u_1+u_2-u_3}{u_1u_2},\ 
u_3\mapsto \frac{u_1-u_2+u_3}{u_1u_3},\\
\be_3 &: 
u_1\mapsto u_1,\ u_2\mapsto u_3,\ u_3\mapsto u_2,\\
-I_3 &: 
u_1\mapsto \frac{-u_1+u_2+u_3}{u_2u_3},\ 
u_2\mapsto \frac{u_1-u_2+u_3}{u_1u_3},\ 
u_3\mapsto \frac{u_1+u_2-u_3}{u_1u_2}.
\end{align*}

{\rm (ii)} 
The case where $H=\langle\ta_3,\la_3,\cb\rangle\simeq \cA_4$. 
We have
\begin{align*}
K(x_1,x_2,x_3)^{G_{7,2,3}}&=K(s_1,s_2,s_3)^{\langle-I_3\rangle},\\
K(x_1,x_2,x_3)^{G_{7,3,3}}&=K(s_1,s_2,s_3)^{\langle-\beta_3\rangle},\\
K(x_1,x_2,x_3)^{G_{7,5,3}}&=K(s_1,s_2,s_3)^{\langle\beta_3,-I_3\rangle}
\end{align*}
where $K(s_1,s_2,s_3)=K(x_1,x_2,x_3)^H$ and 
\begin{align*}
-\be_3 &: s_1\mapsto s_1,\ 
s_2\mapsto \frac{1+3s_1^2}{s_2},\ 
s_3\mapsto \frac{-1-6s_1^2-9s_1^4+2s_2+10s_1^2s_2
+4s_1^4s_2-s_2^2-3s_1^2s_2^2}{s_2s_3},\nonumber\\
\be_3 &: s_1\mapsto -s_1,\ s_2\mapsto s_2,\ s_3\mapsto s_3,\label{acts}\\
-I_3 &: s_1\mapsto -s_1,\ 
s_2\mapsto \frac{1+3s_1^2}{s_2},\ 
s_3\mapsto \frac{-1-6s_1^2-9s_1^4+2s_2+10s_1^2s_2
+4s_1^4s_2-s_2^2-3s_1^2s_2^2}{s_2s_3}.\nonumber
\end{align*}
\end{theorem}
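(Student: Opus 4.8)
The plan is to reduce the problem modulo the normal subgroup $H$ and then to analyse the residual action of $G/H$, using the rationality of three-dimensional purely monomial actions (Theorem \ref{thHKHR}), of three-dimensional algebraic $k$-tori (Theorem \ref{thm:Kun}), and of purely quasi-monomial actions in at most two variables (Theorems \ref{thm:Vos} and \ref{thm:HKK2}). Since $H$ is the kernel of the natural map $G\to\mathrm{Gal}(K/k)$, it is normal in $G$, the quotient $G/H$ acts faithfully on $K$ with $K^{G/H}=k$, and
\[
K(x_1,x_2,x_3)^G=\bigl(K(x_1,x_2,x_3)^H\bigr)^{G/H}.
\]
As $H$ acts trivially on $K$ and purely monomially on $x_1,x_2,x_3$, Theorem \ref{thHKHR} over the base field $K$ shows that $K(x_1,x_2,x_3)^H$ is $K$-rational, and the first step is to make this explicit for each admissible pair $(G,H)$: one exhibits generators $u_1,u_2,u_3$ (resp.\ $s_1,s_2,s_3$) over $K$ together with the induced $G/H$-action. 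This residual action need not be quasi-monomial — typically it is a more general birational action of Cremona type — because a nontrivial $H\le GL_3(\bZ)$ fixes no rank-three sublattice of $\bigoplus\bZ x_i$.

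For part (1), and for part (2) with $H\notin\{\langle\ta_3,\la_3\rangle,\langle\ta_3,\la_3,\cb\rangle\}$, one enumerates the normal subgroups $1\neq H\trianglelefteq G$ of the cubic groups $\cA_4,\cS_4,\cA_4\times\cC_2,\cS_4\times\cC_2$ in their $GL_3(\bZ)$-realisations and treats each case by choosing an appropriate intermediate reduction. When $H=G$ the action is purely monomial and Theorem \ref{thHKHR} applies at once. When $H$ is one of the larger normal subgroups, the explicit $K(x_1,x_2,x_3)^H=K(u_1,u_2,u_3)$ together with the residual $G/H$-action either visibly splits off a rank-one direction, reducing to Theorem \ref{thm:Vos} or Theorem \ref{thm:HKK2}, or is conjugate to a purely monomial action in three variables that is $k$-rational by Theorem \ref{thHKHR}. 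For the small central $H=\langle-I_3\rangle$ one instead reduces first by the copy of $\cA_4$ or $\cS_4$ acting faithfully on $K$: this produces the function field of a three-dimensional $k$-torus, which one checks is not among Kunyavskii's fifteen exceptions and hence is $k$-rational by Theorem \ref{thm:Kun}, after which one shows the remaining involution $-I_3$ acts by a rational (de Jonquières type) transformation, preserving $k$-rationality.

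For part (2) there remain $G\in\{G_{7,2,3},G_{7,3,3},G_{7,5,3}\}$ with $H=\langle\ta_3,\la_3\rangle\simeq\cC_2\times\cC_2$ or $H=\langle\ta_3,\la_3,\cb\rangle\simeq\cA_4$, where one carries out the explicit reduction stated in the theorem. For $H\simeq\cC_2\times\cC_2$ one computes the invariant field of the Klein four-action of $\langle\ta_3,\la_3\rangle$ on $K(x_1,x_2,x_3)$, produces the generators $u_1,u_2,u_3$, and verifies by direct substitution that $\cb,-\be_3,\be_3,-I_3$ act by the displayed formulas; since $G/H$ is generated by the images of $\cb$ together with $-I_3$ (resp.\ $-\be_3$; resp.\ $\be_3,-I_3$), this yields the asserted identities of fixed fields. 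For $H\simeq\cA_4$ one first computes generators $s_1,s_2,s_3$ of $K(x_1,x_2,x_3)^{\langle\ta_3,\la_3,\cb\rangle}$ — proceeding through the intermediate $\cC_2\times\cC_2$-invariants — and then evaluates the residual action of $-\be_3,\be_3,-I_3$, obtaining the displayed rational expressions and the identities $K(x_1,x_2,x_3)^G=K(s_1,s_2,s_3)^{G/H}$ with $G/H\in\{\langle-I_3\rangle,\langle-\be_3\rangle,\langle\be_3,-I_3\rangle\}$.

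The main obstacle is the explicit invariant theory for $H\simeq\cA_4$: one must descend through the $\cC_2\times\cC_2$-invariants and choose generators $s_1,s_2,s_3$ for which the residual order-two (resp.\ Klein four) action stays manageable — the asymmetry between the formulas for $s_2$ and $s_3$ in the statement signals that this choice is delicate — after which the verification of the action formulas is mechanical but lengthy and uses $\mathrm{char}\,k\neq 2$. The secondary difficulty is the bookkeeping in part (1): for each admissible $(G,H)$ one must choose the right intermediate subgroup, identify the $GL_3(\bZ)$-conjugacy class of the resulting three-variable action, and confirm it avoids Kunyavskii's fifteen non-rational tori, which is where the explicit matrices $\ta_3,\la_3,\cb,\be_3$ and the normalisation $N=1$ supplied by Theorem \ref{thp12} are put to work.
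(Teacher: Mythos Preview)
Your overall outline is reasonable, and for part~(2), cases (i) and (ii), your description matches what the paper does: one computes $K(x_1,x_2,x_3)^H$ explicitly via Lemma~\ref{lemV42} (for $H=\langle\ta_3,\la_3\rangle$) and via the transcendence bases from \cite{HK10} (for $H=\langle\ta_3,\la_3,\cb\rangle$), then records the residual action. However, there is a genuine gap in your treatment of the case $H=\langle -I_3\rangle$ for $G=G_{7,2,3}$ and $G_{7,5,3}$, which is part of the content of the theorem (these pairs must be shown $k$-rational, since they are not in the list of possible exceptions).

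You propose to ``reduce first by the copy of $\cA_4$ or $\cS_4$ acting faithfully on $K$'', obtaining a three-dimensional algebraic $k$-torus, and to check that it avoids Kunyavskii's fifteen exceptions. But here the relevant subgroup is $\langle\ta_3,\la_3,\cb\rangle=G_{7,1,3}$ (resp.\ $\langle\ta_3,\la_3,\cb,\be_3\rangle=G_{7,4,3}$), and both of these \emph{are} on Kunyavskii's list: the corresponding tori are not even retract $k$-rational. So this route is blocked, and your fallback of letting $-I_3$ act afterwards by a ``de Jonqui\`eres type'' map cannot rescue it, since you would be taking $\langle -I_3\rangle$-invariants of a field you have not shown to be rational. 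The paper proceeds in the opposite order: it first takes $K(x_1,x_2,x_3)^{\langle -I_3\rangle}=K(u_1,u_2,u_3)$ via Lemma~\ref{lem:mI3}, but the residual $G/H$-action on $K(u_1,u_2,u_3)$ is \emph{not} monomial (so this is not an algebraic-torus problem either). The paper then performs a long chain of explicit birational substitutions $u_i\rightsquigarrow v_i\rightsquigarrow t_i\rightsquigarrow p_i$ until the action matches that of a wreath product $\cC_2\wr\cS_3$ acting linearly on an auxiliary $K'(X,Y,Z)$, after which rationality follows from Theorem~\ref{thEM}. This is the hardest individual case in the proof and is not covered by your sketch.

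A secondary point: for the 7th crystal system (I) and (II) you also omit the mechanism the paper actually uses. The groups $G_{7,j,2}$ are $\bZ$-irreducible, and the paper handles them not by a direct case analysis but via the conjugacy-classes-move technique (Lemma~\ref{lem:conversion}): one has $G_{7,j,2}\sim P_1^{-1}G_{7,j,1}P_1$, so it suffices to prove rationality of $K(x_1,x_2,x_3)^{G_{7,j,1}}$ and of $\bigl(K(x_1,x_2,x_3)^{G_{7,j,1}}\bigr)^{\langle\rho_1\rangle}$. For $\langle\ta_1,\la_1\rangle\le H$ the paper then uses Lemma~\ref{lem:tau1lambda1} to reduce to the already-settled groups $G_{5,j,1}$ of the 5th crystal system, rather than to a generic torus check. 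Your sketch does not indicate either of these reductions.
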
 

We do not know the rationality of $K(x_1,x_2,x_3)^G$ 
in Theorem \ref{thmain2} (2) (i) with $H\simeq\cC_2\times\cC_2$. 
For the case (2) (ii) of Theorem \ref{thmain2} with $H\simeq \cA_4$, 
we will give the following partial result. 
It turns out that the fixed field $K(x_1,x_2,x_3)^G$ has a conic bundle structure. 

\begin{proposition}\label{prop1}
Let $k$ be a field with {\rm char} $k\neq 2$ and $G=G_{7,j,3}$ $(j=2,3,5)$. 
Assume that $H=\{\sigma\in G\mid \sigma(\alpha)=\alpha\ {\rm for\ any\ } \alpha\in K\}
=\langle\ta_3,\la_3,\cb\rangle\simeq \cA_4$.\\ 
{\rm (1)} $G=G_{7,2,3}=\langle H,-I_3\rangle$. 
There exist $w_1,w_2,w_3\in K(x_1,x_2,x_3)^H$ such that 
$K(x_1,x_2,x_3)^H=K(w_1,w_2,w_3)$ and 
\begin{align*}
-I_3 :&\ w_1\mapsto w_1,\ w_2\mapsto -w_2,\\ 
&\ w_3\mapsto 
\frac{w_1(1+w_1+w_1^2)^2-(2+3w_1+4w_1^2+2w_1^3)w_2^2+(w_1+2)w_2^4}{w_3}.
\end{align*}
In particular, we have 
$K(x_1,x_2,x_3)^{G}=k(X,Y,W_1,W_2)$ 
where $K=k(\sqrt{b})$ and 
\begin{align*}
X^2-bY^2=W_1(1+W_1+W_1^2)^2-b(2+3W_1+4W_1^2+2W_1^3)W_2^2
+b^2(W_1+2)W_2^4.
\end{align*}
{\rm (2)} 
$G=G_{7,3,3}=\langle H,-\be_3\rangle$. 
There exist $u_1,u_2,u_3\in K(x_1,x_2,x_3)^H$ such that 
$K(x_1,x_2,x_3)^H=K(u_1,u_2,u_3)$ and 
\begin{align*}
-\be_3 : u_1\mapsto u_1,\ u_2\mapsto -u_2,\ 
u_3\mapsto \frac{2(5-u_2^2)(u_1^2-u_2^2+1)}{u_3}.
\end{align*}
We have $K(x_1,x_2,x_3)^G=k(X,Y,U_1,U_2)$ where 
$X^2-dY^2=2(5-dU_2^2)(U_1^2-dU_2^2+1)$ and $K=k(\sqrt{d})$. 
Moreover, if $\sqrt{5}\in k$, 
then the following conditions are equivalent:\\
{\rm (i)} 
$K(x_1,x_2,x_3)^G$ is $k$-rational;\\
{\rm (ii)} 
$K(x_1,x_2,x_3)^G$ is $k$-unirational;\\
{\rm (iii)} 
$X^2-dY^2-2v_0^2-2v_1^2+2dv_2^2$ has a non-trivial $k$-zero 
with $K=k(\sqrt{d})$.\\
In particular, if $\sqrt{5}, \sqrt{2}\in k$ or $\sqrt{5}, \sqrt{-1}\in k$, 
then $K(x_1,x_2,x_3)^G$ is $k$-rational.\\
{\rm (3)} $G=G_{7,5,3}=\langle H,\be_3,-I_3\rangle$. 
There exist $p_1,p_2,p_3\in K(x_1,x_2,x_3)^H$ such that 
$K(x_1,x_2,x_3)^H$ $=$ $K(p_1,p_2,p_3)$ and 
\begin{align*}
\be_3 &: %\sqrt{a}\mapsto -\sqrt{a}, 
p_1\mapsto -p_1,\ p_2\mapsto -p_2,\ p_3\mapsto -p_3,\\
-I_3 &: %\sqrt{b}\mapsto -\sqrt{b}, 
p_1\mapsto p_1,\ p_2\mapsto -p_2,\ 
p_3\mapsto \frac{-1-5p_1^2-7p_2^2-(p_1^2-p_2^2)(3p_1^2+17p_2^2)+9(p_1^2-p_2^2)^3}{p_3}.
\end{align*}
In particular, we have 
$K(x_1,x_2,x_3)^{G}=k(X,Y,P_1,P_2)$ 
where $K=k(\sqrt{a},\sqrt{b})$ and 
\begin{align*}
X^2-bY^2=-\tfrac{1}{a}-5P_1^2-7bP_2^2-a(P_1^2-bP_2^2)(3P_1^2+17bP_2^2)+9a^2(P_1^2-bP_2^2)^3.
\end{align*}
{\rm (4)} For $G=G_{7,2,3}=\langle H,-I_3\rangle$, 
we assume that $\sqrt{-3}\in k$. 
Then there exist $t_1,t_2,t_3\in K(x_1,x_2,x_3)^H$ such that 
$K(x_1,x_2,x_3)^H=K(t_1,t_2,t_3)$ and 
\begin{align*}
-I_3 : t_1\mapsto -t_1,\ t_2\mapsto t_2,\ 
t_3\mapsto \frac{(t_1^2+4)(t_1^2-t_2^2+1)}{t_3}.
\end{align*}
In particular, if $\sqrt{-3},\sqrt{-1}\in k$, then 
$K(x_1,x_2,x_3)^G$ is $k$-rational.\\
{\rm (5)} If {\rm char} $k=3$, then 
$K(x_1,x_2,x_3)^G$ is $k$-rational for $G=G_{7,j,3}$ $(j=2,3,5)$. 
\end{proposition}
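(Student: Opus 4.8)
The starting point is Theorem~\ref{thmain2}~(2)~(ii): for each $j\in\{2,3,5\}$ we already have a presentation $K(x_1,x_2,x_3)^H=K(s_1,s_2,s_3)$ with $s_1,s_2,s_3$ algebraically independent over $K$, together with explicit formulas for the induced action of $G/H$ on $K(s_1,s_2,s_3)$. Since $H$ acts trivially on $K$ and $K^G=k$, the quotient $G/H$ acts faithfully on $K$ with fixed field $k$; hence $K=k(\sqrt b)$ for $G_{7,2,3}$, $K=k(\sqrt d)$ for $G_{7,3,3}$, and $K=k(\sqrt a,\sqrt b)$ (biquadratic) for $G_{7,5,3}$, and in every case $K(x_1,x_2,x_3)^G=K(s_1,s_2,s_3)^{G/H}$. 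The plan for parts (1)--(4) is to produce, by an explicit multiplicative / linear-fractional change of variables over $K$, new generators of $K(s_1,s_2,s_3)/K$ in which the generator(s) of $G/H$ act in the standard shape: all but one variable are fixed up to sign, and the last variable $z$ satisfies $z\mapsto f/z$ with $f$ a polynomial in the others that is itself invariant. (In part (4) one uses for the \emph{same} group $G_{7,2,3}$ a different substitution available only when $\sqrt{-3}\in k$, exploiting that the quotient $\cC_3$ of $H\simeq\cA_4$ becomes diagonalizable over $K$; this is what yields the cleaner polynomial $(t_1^2+4)(t_1^2-t_2^2+1)$.)

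Once this normal form is reached, one descends by the usual device: if $\si$ generates $\langle\si\rangle\simeq\cC_2$, $\si(\sqrt b)=-\sqrt b$, $\si$ fixes the polynomial $f$, and $\si(z)=f/z$, then $X=\sqrt b\,(z-f/z)$ and $Y=z+f/z$ are invariant with $X^2-bY^2=-4bf$, and $K(\dots,z)^{\langle\si\rangle}=k(\dots,X,Y)$ with just this relation; the sign changes of the remaining variables are absorbed beforehand by passing to their $\sqrt b$-multiples, and $X,Y$ are rescaled to match the stated normalization. For $G_{7,5,3}$ the quotient is $\cC_2\times\cC_2$, so the descent is done in two stages (first the involution negating all three $p_i$, then the other one). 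This produces exactly the presentations $K(x_1,x_2,x_3)^G=k(X,Y,W_1,W_2)$, $k(X,Y,U_1,U_2)$, $k(X,Y,P_1,P_2)$, $k(X,Y,T_1,T_2)$ with a conic-bundle relation $X^2-bY^2=f$ over $k$ of two variables. The sufficiency assertions then follow from the standard fact that a conic $X^2-bY^2=f$ over $k(U_1,U_2)$ carrying a $k(U_1,U_2)$-rational point is $k(U_1,U_2)$-rational, so the total space is $k$-rational; such a point is read off by factoring or specializing $f$. In part (5), $\mathrm{char}\,k=3$ forces $1+3s_1^2=1$ and collapses most coefficients in the Theorem~\ref{thmain2} formulas, after which a further substitution exhibits $f$ as a square times a constant (equivalently, linearizes the residual action), giving unconditional rationality; and in part (4) the extra hypothesis $\sqrt{-1}\in k$ supplies the required point on the simpler model.

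For the equivalence in part (2) the formal implication is (i)$\Rightarrow$(ii); the content is (ii)$\Rightarrow$(iii) and (iii)$\Rightarrow$(i). The key tool is the residue analysis over $k(U_1,U_2)$ of the quaternion algebra attached to the conic, namely $\big(d,\,2(5-dU_2^2)(U_1^2-dU_2^2+1)\big)$: its ramification can occur only along the divisors $\{5-dU_2^2=0\}$ and $\{U_1^2-dU_2^2+1=0\}$, and along the first of these $d$ becomes a square precisely because $\sqrt5\in k$ (the residue field there is $k(U_1)(\sqrt{5d})$, and $5d/5$ is a square), so that residue vanishes automatically. Unirationality of $K(x_1,x_2,x_3)^G$ forces the conic to split after an allowable modification of the base, hence the algebra to be (essentially) constant; translating this along a degree-one section $X=aU_1+b$, $Y=cU_1+e$ with $a,b,c,e\in k(U_2)$ and clearing the $U_2$-denominators expresses the splitting as the isotropy over $k$ of the five-dimensional form $X^2-dY^2-2v_0^2-2v_1^2+2dv_2^2$, which is condition (iii). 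Conversely a nontrivial $k$-zero of that form gives a $k(U_1,U_2)$-point of the conic, so (iii)$\Rightarrow$(i) by the conic-bundle argument above; the concluding special cases are immediate, since when $\sqrt2\in k$ the parts $X^2-2v_0^2$ and $-dY^2+2dv_2^2$ already represent all of $k$, and when $\sqrt{-1}\in k$ the part $-2v_0^2-2v_1^2$ has a nontrivial $k$-zero.

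The main obstacle is twofold. First, locating the normalizing changes of variables in parts (1)--(4): these are not canonical, the target polynomials $f$ are intricate, and finding the substitution that makes the discriminant locus of the conic bundle defined over $k$ and the leading coefficient a norm requires genuine computation rather than a uniform recipe. Second, and more delicate, is the ``only if'' half of part (2) --- controlling the ramification of the quaternion algebra over the affine plane $k(U_1,U_2)$ and showing that unirationality collapses it to the single quadratic condition (iii); this is exactly where $\sqrt5\in k$ is indispensable, as it is what makes the potentially obstructive residue along $\{5-dU_2^2=0\}$ disappear.
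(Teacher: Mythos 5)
Your overall strategy for parts (1), (3), (4) and (5) is the one the paper follows: start from the presentation $K(x_1,x_2,x_3)^H=K(s_1,s_2,s_3)$ of Theorem \ref{thmain2}(2)(ii), perform explicit birational changes of variables over $K$ until each generator of $G/H$ acts in the normal form ``fix/negate all variables but one, send the last to $f/\text{itself}$'', and then descend with the $X,Y$-construction of Lemma \ref{lemXY} (in two stages for $G_{7,5,3}$). That said, the substance of the proposition \emph{is} the explicit chains of substitutions $s\to t\to u\to v\to w$ producing the stated polynomials $f$, and your proposal defers exactly this to ``genuine computation''; as written it establishes the shape of the answer but not the specific formulas asserted.

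The genuine gap is in your argument for (ii)$\Rightarrow$(iii) in part (2). You treat $K(x_1,x_2,x_3)^G$ as the total space of a conic bundle over $k(U_1,U_2)$ and claim that unirationality ``forces the conic to split after an allowable modification of the base, hence the algebra to be (essentially) constant.'' That principle is false: there exist unirational, non-rational conic bundles over rational surfaces (the Artin--Mumford example is of this type), whose generic fiber is a non-split conic; residue/ramification analysis of the quaternion algebra can at best detect the unramified Brauer class, which is an obstruction to (retract) rationality, not a bridge from unirationality to isotropy. The feature you are not using is that, once $\sqrt{5}\in k$ and the substitution $U_3=u_3/(u_2-\sqrt{5})$ has absorbed the factor $5-u_2^2$, the relation becomes $X^2-dY^2=2(1+U_1^2-dU_2^2)$, so $K(x_1,x_2,x_3)^G=k(X,Y,U_1,U_2)$ is the function field of a \emph{single quadric hypersurface} over $k$ whose homogenization is $X^2-dY^2-2v_0^2-2v_1^2+2dv_2^2$. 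Theorem \ref{thOhm} (Ohm; Hajja--Kang--Ohm) then gives the equivalence of $k$-rationality, $k$-unirationality and isotropy of that form in one stroke; this is the paper's route, and it is what makes the hypothesis $\sqrt{5}\in k$ do its work. Your implication (iii)$\Rightarrow$(i) and the concluding special cases ($\sqrt{2}\in k$ or $\sqrt{-1}\in k$) are fine once this is in place.
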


As an application of Theorems \ref{thmain1} and \ref{thmain2} and 
Proposition \ref{prop1}, 
we get the following theorem which complements to Theorem \ref{th62}: 

\begin{theorem}\label{thmain3}
Let $k$ be a field with {\rm char} $k\neq 2$, 
$G$ be a finite group and $M$ be a $G$-lattice 
such that $G$ acts on $k(M)$ by purely monomial $k$-automorphisms. 
Assume that 
{\rm (i)} $M=M_1\oplus M_2$ as $\bZ[G]$-modules where
${\rm rank}_{\bZ}M_1=3$ and ${\rm rank}_{\bZ} M_2=2$, 
{\rm (ii)} both $M_1$ and $M_2$ are not faithful $G$-lattices. 
Let $N_i=\{\sigma\in G\mid\sigma(\alpha)=\alpha\ 
{\rm for\ any}\ \alpha \in k(M_i)\}$ $(i=1,2)$. 
Then $k(M)^G$ is $k$-rational except 
the following situation: 
$(G/N_1,N_1N_2/N_1,G/N_2,N_1N_2/N_2)\simeq (G_{7,2,3},\cA_4,\cC_4,\cC_2)$, 
$(G_{7,3,3},\cA_4,\cC_4,\cC_2)$ and $(G_{7,5,3},\cA_4,\cD_4,\cC_2)$. 
Moreover, if {\rm char} $k=3$, then $k(M)^G$ is $k$-rational. 
\end{theorem}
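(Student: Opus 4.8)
The plan is to reduce the $5$-dimensional decomposable problem to the $3$-dimensional results already in hand. Since $M = M_1 \oplus M_2$ with $\mathrm{rank}_\bZ M_1 = 3$ and $\mathrm{rank}_\bZ M_2 = 2$, and $G$ acts purely monomially, we have $k(M)^G = k(M_1 \oplus M_2)^G$. The standard device is to take the fixed field under $N_1 = \{\sigma : \sigma|_{k(M_1)} = \mathrm{id}\}$ first (or $N_2$): by Theorem \ref{thp12}-style arguments, $k(M_1 \oplus M_2)^{N_1}$ is again a field of the form $L(M_1)$ for a suitable field $L = k(M_2)^{N_1}$ on which $G/N_1$ acts, with $G/N_1 \hookrightarrow GL_3(\bZ)$ acting purely quasi-monomially on the three variables coming from $M_1$, and where $L/k$ carries a $G/N_1$-action with fixed field $k$. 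Concretely, $L(M_1) = K(x_1,x_2,x_3)$ with $K = k(M_2)^{N_1}$, and one checks that $G/N_1$ acts on $K$ through $G/(N_1 \cdot \text{(kernel on }M_2))$, i.e. the subgroup acting trivially on $K$ is exactly $N_1 N_2/N_1$ — this is the group called $H$ in Theorems \ref{thmain1}, \ref{thmain2}. So $k(M)^G = K(x_1,x_2,x_3)^{G/N_1}$ with $H = N_1 N_2/N_1$, and the hypothesis that $M_1$ is not faithful forces $H = N_1 N_2 / N_1 \neq 1$ (here $N_1 \neq 1$ means $N_1 N_2 \supsetneq N_2$; one must argue $N_1 \not\subset N_2$ in the problematic cases, which is where faithfulness of $M_2$ fails too). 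Symmetrically one may also take $N_2$ first and land in a $2$-dimensional purely quasi-monomial problem governed by Theorem \ref{thm:HKK2}.

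The first real step is therefore a bookkeeping lemma identifying, for the given $G$-lattice $M$, the pair $(G/N_1, H)$ — or $(G/N_2, H)$ — and showing it falls into the scope of Theorems \ref{thmain1} and \ref{thmain2}. Then I apply those theorems: if $(G/N_1, H)$ is not one of the listed exceptional pairs (i.e. $G/N_1$ is not in the $7$th crystal system in one of the cases $G_{7,2,3}, G_{7,3,3}, G_{7,5,3}$ with $H \simeq \cA_4$ or $\cC_2\times\cC_2$, nor in the $4$th crystal system with $H \simeq \cC_2$ or $\cC_2\times\cC_2$), then $k(M)^G$ is $k$-rational. But when $G/N_1$ lands in an exceptional case, I exploit the \emph{second} reduction: since $M_2$ also has rank $2$, taking $N_2$-invariants first gives a $2$-dimensional purely quasi-monomial action of $G/N_2$ with $\overline H = N_1 N_2/N_2$, governed by Theorem \ref{thm:HKK2}, whose only exceptions are $(G/N_2, \overline H) \simeq (\cC_4, \cC_2)$ or $(\cD_4, \cC_2)$ with specific actions. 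Cross-referencing the two descriptions: the genuinely bad situations are exactly those where \emph{both} reductions land in an exceptional case, and matching the group data on both sides yields precisely the three listed quadruples $(G/N_1, N_1N_2/N_1, G/N_2, N_1N_2/N_2) \simeq (G_{7,2,3}, \cA_4, \cC_4, \cC_2)$, $(G_{7,3,3}, \cA_4, \cC_4, \cC_2)$, $(G_{7,5,3}, \cA_4, \cD_4, \cC_2)$. In every other combination at least one of the two reductions certifies $k$-rationality.

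For the three exceptional quadruples, I would extract from Proposition \ref{prop1} the explicit conic-bundle presentation of $k(M)^G = K(x_1,x_2,x_3)^{G/N_1}$ — e.g. $k(X,Y,W_1,W_2)$ with $X^2 - bY^2 = $ (quartic in $W_2$) from Proposition \ref{prop1}(1) — noting that the field $K = k(M_2)^{N_1}$ here is exactly $k(\sqrt b)$ or $k(\sqrt a, \sqrt b)$ because $M_2$ has rank $2$ and $G/N_2 \simeq \cC_4$ or $\cD_4$; the quadratic subextension of $K/k$ that actually matters is cut out by $N_1 N_2$. Then the last clause — if $\mathrm{char}\, k = 3$ then $k(M)^G$ is $k$-rational — follows directly from Proposition \ref{prop1}(5). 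I expect the main obstacle to be the bookkeeping in the second paragraph: one must verify that an \emph{abstract} $G$-lattice decomposition $M_1 \oplus M_2$ with both summands unfaithful and with the $3$-dimensional block $M_1$ realizing one of the three $7$th-crystal-system lattices $G_{7,2,3}, G_{7,3,3}, G_{7,5,3}$ is \emph{only} compatible with a rank-$2$ complement $M_2$ whose associated $(G/N_2, \overline H)$ is $(\cC_4, \cC_2)$ or $(\cD_4, \cC_2)$ — in other words, that no other unfaithful rank-$2$ lattice structure is available for the relevant quotient group — and conversely that these quadruples are genuinely realizable. This is a finite check over the classification of low-rank $\bZ[G]$-lattices for the groups $\cA_4\times\cC_2$, $\cS_4$, $\cS_4\times\cC_2$, but it is the part requiring care rather than the invocation of the $3$-dimensional theorems.
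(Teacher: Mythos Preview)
Your overall strategy matches the paper's: reduce to the $3$-dimensional side via $N_1$ (Theorems \ref{thmain1}, \ref{thmain2}) and to the $2$-dimensional side via $N_2$ (Theorem \ref{thm:HKK2}), and declare rationality unless \emph{both} reductions land in an exceptional case. The gap is in your claim that ``matching the group data on both sides yields precisely the three listed quadruples.'' This is correct for the $7$th crystal system exceptions --- for instance, $(G/N_1,H)\simeq(G_{7,j,3},\cC_2\times\cC_2)$ forces $|N_2|=|H|=4$, which is incompatible with $|G/N_2|\in\{4,8\}$ --- but it is \emph{false} for the $4$th crystal system exceptions. Take $G=\langle a,b\rangle\simeq\cC_4\times\cC_2$ with $N_1=\langle b\rangle$, $N_2=\langle a^2b\rangle$, let $a$ act on $M_1=\bZ^3$ by $\caa$ and on $M_2=\bZ^2$ by $\left[\begin{smallmatrix}0&-1\\1&0\end{smallmatrix}\right]$, and let $b$ act trivially on $M_1$ and by $-I_2$ on $M_2$. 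Then $(G/N_1,H)\simeq(\cC_4,\langle\caa^2\rangle)$ is a $3$-dimensional exception and $(G/N_2,N_1N_2/N_2)\simeq(\cC_4,\cC_2)$ is a $2$-dimensional exception, so your cross-referencing leaves this case open.

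The paper handles the $4$th crystal system by a separate argument in its Step~2 before ever doing the cross-referencing: it observes that $K(x_1,x_2,x_3)^{G/N_1}$ depends only on the $\bQ$-conjugacy class of $G/N_1$ (this is exactly what the proof of Theorem~\ref{thmain1} shows --- after taking $\langle\caa^2\rangle$-invariants the actions of $4$th crystal system (I) and (II) on $K(z_1,z_2,z_3)$ coincide), so one may assume $G/N_1$ lies in the $\bZ$-reducible $4$th crystal system (I). Then $M_1$ splits off a rank-$1$ summand $M_{1,1}$, and $M=M_{1,1}\oplus M_{1,2}\oplus M_2$ is disposed of by \cite[Proposition~5.2]{HKK14}. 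You need this extra step (or some substitute, e.g.\ a direct verification that the relevant norm-residue symbol vanishes over the function field $k(y_1,y_2)^G$) before the cross-referencing reduces to the $7$th crystal system alone, where your counting argument does work.
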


This paper is organized as follows. 
In Section \ref{sec:notation}, 
we recall the classification of subgroups of $GL_3(\bZ)$ up to conjugation. 
Section \ref{sepre} contains some rationality results which will be used 
in %the remaining part of 
the paper. 
A technique of conjugacy classes move, which is described in 
Subsection \ref{subsec:conversion}, is useful and will 
be used in the proof of Theorems \ref{thmain1} and \ref{thmain2} 
to the cases of 3rd crystal system (II), 
the 4th crystal system (II) and 
7th crystal system (II) in 
Subsections \ref{subsec:3}, \ref{subsec:4} and \ref{subsec:71} respectively. 
Section \ref{seP1} contains the proof of Theorem \ref{thmain1}. 
The proof of Theorem \ref{thmain2} is given in Section \ref{seP2}. 
In Section \ref{seP3}, the proof of Proposition \ref{prop1} will be given. 
We will prove Theorem \ref{thmain3} in Section \ref{seP4}.

\begin{acknowledgments}
The authors thank Ming-chang Kang for many helpful comments 
and valuable suggestions. 
They also thank the referee for careful reading %of the manuscript 
and useful comments. 
\end{acknowledgments}

%%%%%%%%%%%%%%%%%%%%%%%%%%%%%%%%%%%%%%%%%%%%%%%%%%%%%%%%%%%%%%%%%%%
%%%%%%%%%%%%%%%%%%%%%%%%%%%%%%%%%%%%%%%%%%%%%%%%%%%%%%%%%%%%%%%%%%%
%
\section{Notation} \label{sec:notation}  
%
%%%%%%%%%%%%%%%%%%%%%%%%%%%%%%%%%%%%%%%%%%%%%%%%%%%%%%%%%%%%%%%%%%%
%%%%%%%%%%%%%%%%%%%%%%%%%%%%%%%%%%%%%%%%%%%%%%%%%%%%%%%%%%%%%%%%%%%

Let $\cS_n$ (resp. $\cA_n$, $\cD_n$, $\cC_n$) 
be the symmetric group (resp. the alternating group, the dihedral group, the cyclic group) 
of degree $n$ of order $n!$ (resp. $n!/2$, $2n$, $n$). 
Let $I_3$ be the $3\times 3$ identity matrix. 
We define the following matrices: 
\begin{align*}
\ca&=\left[\begin{array}{ccc} 0 & -1 & 0\\ 1 & -1 & 0\\ 0 & 0 & 1\end{array}\right],& 
\cb&=\left[\begin{array}{ccc} 0 & 0 & 1\\ 1 & 0 & 0\\ 0 & 1 & 0\end{array}\right],&
\caa&= \left[\begin{array}{ccc} 0 & -1 & 0\\ 1 & 0 & 0\\ 0 & 0 & 1\end{array}\right], & 
\cbb&= \left[\begin{array}{ccc} 0 & 1 & 0\\ 0 & 1 & -1\\ -1 & 1 & 0\end{array}\right], 
\end{align*} 
\allowdisplaybreaks[4] 
\begin{align*}
\ta_1&=\left[\begin{array}{ccc}-1 & 0 & 0\\ 0 & -1 & 0\\ 0 & 0 & 1\end{array}\right],&
\la_1&=\left[\begin{array}{ccc}-1 & 0 & 0\\ 0 & 1 & 0\\ 0 & 0 & -1\end{array}\right],&
\be_1&=\left[\begin{array}{ccc}0 & -1 & 0\\ -1 & 0 & 0\\ 0 & 0 & 1\end{array}\right],\\
\ta_2&=\left[\begin{array}{ccc}0 & 1 & 0\\ 1 & 0 & 0\\ -1 & -1 & -1\end{array}\right],&
\la_2&=\left[\begin{array}{ccc}0 & 0 & 1\\ -1 & -1 & -1\\ 1 & 0 & 0\end{array}\right],&
\be_2&=\left[\begin{array}{ccc}1 & 0 & 0\\ 0 & 1 & 0\\ -1 & -1 & -1\end{array}\right],\\
\ta_3&=\left[\begin{array}{ccc}0 & 1 & -1\\ 1 & 0 & -1\\ 0 & 0 & -1\end{array}\right],&
\la_3&=\left[\begin{array}{ccc}0 & -1 & 1\\ 0 & -1 & 0\\ 1 & -1 & 0\end{array}\right],&
\be_3&=\left[\begin{array}{ccc}1 & 0 & -1\\ 0 & 1 & -1\\ 0 & 0 & -1\end{array}\right], \\ 
\alpha&=\left[\begin{array}{ccc} 0 & 1 & 0\\ 1 & 0 & 0\\ 0 & 0 & 1\end{array}\right], &
-I_3 &=\left[\begin{array}{ccc} -1 & 0 & 0\\ 0 & -1 & 0\\ 0 & 0 & -1\end{array}\right].
\end{align*}
There exist exactly $73$ finite subgroups 
contained in $GL_3(\bZ)$ up to conjugation 
which are classified into $7$ crystal systems 
(see \cite[Table 1]{BBNWZ78}). \\

%%%%%%%%%%%%%%%%%%%%%%%%%

\noindent 
The 1st crystal system ($\bZ$-reducible): 
\begin{align*}
G_{1,1,1}=&\{I_3\},&  G_{1,2,1}&=\langle-I_3\rangle\simeq\cC_2.
\end{align*} 

%%%%%%%%%%%%%%%%%%%%%%%%%%

\noindent 
The 2nd crystal system ($\bZ$-reducible): 
\begin{align*}
G_{2,1,1}&=\langle \lambda_1\rangle\simeq\cC_2,& 
G_{2,1,2}&=\langle -\alpha\rangle\simeq\cC_2,\\
G_{2,2,1}&=\langle -\lambda_1\rangle\simeq\cC_2,& 
G_{2,2,2}&=\langle \alpha\rangle\simeq\cC_2,\\
G_{2,3,1}&=\langle \lambda_1,-I_3\rangle\simeq\cC_2\times \cC_2,&
G_{2,3,2}&=\langle -\alpha,-I_3\rangle\simeq\cC_2\times \cC_2. 
\end{align*} 

%%%%%%%%%%%%%%%%%%%%%%%%%%%%%%%%%%%

\noindent 
The 3rd crystal system (I) ($\bZ$-reducible): 
\begin{align*}
G_{3,1,1}&=\langle \tau_1,\lambda_1\rangle\simeq\cC_2\times \cC_2,& 
G_{3,1,2}&=\langle \tau_1,-\alpha\rangle\simeq\cC_2\times \cC_2,\\
G_{3,2,1}&=\langle \tau_1,-\lambda_1\rangle\simeq\cC_2\times \cC_2,&
G_{3,2,2}&=\langle \tau_1,\alpha\rangle\simeq\cC_2\times \cC_2,\\  
G_{3,2,3}&=\langle -\alpha, \beta_1 \rangle\simeq\cC_2\times\cC_2, \\ 
G_{3,3,1}&=\langle \tau_1,\lambda_1,-I_3\rangle\simeq\cC_2\times 
\cC_2\times \cC_2, &
G_{3,3,2}&=\langle \tau_1,-\alpha,-I_3\rangle\simeq\cC_2\times 
\cC_2\times \cC_2.
\end{align*} 

%%%%%%%%%%%%%%%%%%%%%%%%%%%%%%%%%%%

\noindent 
The 3rd crystal system (II) 
($\bZ$-irreducible, but $\bQ$-reducible):
\begin{align*}
G_{3,1,3}&=\langle \tau_2,\lambda_2\rangle\simeq\cC_2\times \cC_2,& 
{}^*G_{3,1,4}&=\langle \tau_3,\lambda_3\rangle\simeq\cC_2\times \cC_2,\\
G_{3,2,4}&=\langle \tau_2,-\lambda_2\rangle\simeq\cC_2\times \cC_2,& 
G_{3,2,5}&=\langle \tau_3,-\lambda_3\rangle\simeq\cC_2\times \cC_2,\\ 
{}^*G_{3,3,3}&=\langle \tau_2,\lambda_2,-I_3\rangle\simeq\cC_2\times 
\cC_2\times \cC_2,& 
{}^*G_{3,3,4}&=\langle \tau_3,\lambda_3,-I_3\rangle\simeq\cC_2\times 
\cC_2\times \cC_2.
\end{align*} 

%%%%%%%%%%%%%%%%%%%%%%%%%%%%%%%%%%%

\noindent 
The 4th crystal system (I) ($\bZ$-reducible): 
\begin{align*}
G_{4,1,1}&=\langle \caa\rangle\simeq\cC_4,& 
G_{4,2,1}&=\langle -\caa\rangle\simeq\cC_4,\\ 
G_{4,3,1}&=\langle \caa,-I_3\rangle\simeq\cC_4\times \cC_2,&
G_{4,4,1}&=\langle \caa,\lambda_1\rangle\simeq\cD_4,\\ 
G_{4,5,1}&=\langle \caa,-\lambda_1\rangle\simeq\cD_4,&  
G_{4,6,1}&=\langle -\caa,\lambda_1\rangle\simeq\cD_4, \\   
G_{4,6,2}&=\langle -\caa,-\lambda_1\rangle\simeq\cD_4, &  
G_{4,7,1}&=\langle \caa,\lambda_1,-I_3\rangle\simeq\cD_4\times \cC_2.
\end{align*} 

%%%%%%%%%%%%%%%%%%%%%%%%%%%%%%%%%%%

\noindent 
The 4th crystal system (II) ($\bZ$-irreducible, but $\bQ$-reducible): 
\begin{align*}
G_{4,1,2}&=\langle \cbb\rangle\simeq\cC_4,&
G_{4,2,2}&=\langle -\cbb\rangle\simeq\cC_4,\\
{}^*G_{4,3,2}&=\langle \cbb,-I_3\rangle\simeq\cC_4\times \cC_2,&
{}^*G_{4,4,2}&=\langle \cbb,\lambda_3\rangle\simeq\cD_4,\\ 
G_{4,5,2}&=\langle \cbb,-\lambda_3\rangle\simeq\cD_4, & 
G_{4,6,3}&=\langle -\cbb,-\lambda_3\rangle\simeq\cD_4, \\  
{}^*G_{4,6,4}&=\langle -\cbb,\lambda_3\rangle\simeq\cD_4, &  
{}^*G_{4,7,2}&=\langle \cbb,\lambda_3,-I_3\rangle\simeq\cD_4\times \cC_2.
\end{align*} 

%%%%%%%%%%%%%%%%%%%%%%%%%%%%%%%%%%%

\noindent 
The 5th crystal system (I) ($\bZ$-irreducible, but $\bQ$-reducible): 
\begin{align*}
G_{5,1,1}&=\langle \cb\rangle\simeq\cC_3,& 
G_{5,2,1}&=\langle \cb,-I_3\rangle\simeq\cC_6,\\ 
G_{5,3,1}&=\langle \cb,-\al\rangle\simeq\cS_3,&
G_{5,4,1}&=\langle \cb,\al\rangle\simeq\cS_3, \\ 
G_{5,5,1}&=\langle \cb,-\al,-I_3\rangle\simeq\cD_6.
\end{align*}

%%%%%%%%%%%%%%%%%%%%%%%%%%%%%%%%%%%

\noindent 
The 5th crystal system (II) ($\bZ$-reducible): 
\begin{align*}
G_{5,1,2}&=\langle \ca\rangle\simeq\cC_3, & 
G_{5,2,2}&=\langle \ca,-I_3\rangle\simeq\cC_6,\\
G_{5,3,2}&=\langle \ca,-\al\rangle\simeq\cS_3,& 
G_{5,3,3}&=\langle \ca,-\be_1\rangle\simeq\cS_3,\\
G_{5,4,2}&=\langle \ca,\be_1\rangle\simeq\cS_3,& 
G_{5,4,3}&=\langle \ca,\al\rangle\simeq\cS_3,\\
G_{5,5,2}&=\langle \ca,-\al,-I_3\rangle\simeq\cD_6,& 
G_{5,5,3}&=\langle \ca,-\be_1,-I_3\rangle\simeq\cD_6.
\end{align*}

%%%%%%%%%%%%%%%%%%%%%%%%%%%%%%%%%%%

\noindent 
The 6th crystal system ($\bZ$-reducible): 
\begin{align*}
G_{6,1,1}&=\langle \ca,\ta_1\rangle\simeq\cC_6,&
G_{6,2,1}&=\langle \ca,-\ta_1\rangle\simeq\cC_6,\\
G_{6,3,1}&=\langle \ca,\ta_1,-I_3\rangle\simeq\cC_6\times\cC_2,&
G_{6,4,1}&=\langle \ca,\ta_1,-\be_1\rangle\simeq\cD_6,\\  
G_{6,5,1}&=\langle \ca,\ta_1,\be_1\rangle\simeq\cD_6,& 
G_{6,6,1}&=\langle \ca,-\ta_1,\be_1\rangle\simeq\cD_6,\\ 
G_{6,6,2}&=\langle \ca,-\ta_1,-\be_1\rangle\simeq\cD_6,&
G_{6,7,1}&=\langle \ca,\ta_1,-\be_1,-I_3\rangle\simeq\cD_6\times\cC_2.
\end{align*}

%%%%%%%%%%%%%%%%%%%%%%%%%%%%%%%%%%%

\noindent 
The 7th crystal system (I) ($\bQ$-irreducible): 
\begin{align*}
G_{7,1,1}&=\langle \ta_1,\la_1,\cb\rangle\simeq \cA_4, &
G_{7,2,1}&=\langle \ta_1,\la_1,\cb,-I_3\rangle\simeq \cA_4\times \cC_2,\\
G_{7,3,1}&=\langle \ta_1,\la_1,\cb,-\be_1\rangle\simeq \cS_4, &
G_{7,4,1}&=\langle \ta_1,\la_1,\cb,\be_1\rangle\simeq \cS_4, \\
G_{7,5,1}&=\langle \ta_1,\la_1,\cb,\be_1,-I_3\rangle\simeq \cS_4\times \cC_2.
\end{align*}

%%%%%%%%%%%%%%%%%%%%%%%%%%%%%%%%%%%

\noindent 
The 7th crystal system (II) ($\bQ$-irreducible): 
\begin{align*}
G_{7,1,2}&=\langle \ta_2,\la_2,\cb\rangle\simeq \cA_4, &
{}^*G_{7,2,2}&=\langle \ta_2,\la_2,\cb,-I_3\rangle\simeq \cA_4\times \cC_2,\\
{}^*G_{7,3,2}&=\langle \ta_2,\la_2,\cb,-\be_2\rangle\simeq \cS_4, &
G_{7,4,2}&=\langle \ta_2,\la_2,\cb,\be_2\rangle\simeq \cS_4, \\
{}^*G_{7,5,2}&=\langle \ta_2,\la_2,\cb,\be_2,-I_3\rangle\simeq \cS_4\times \cC_2.
\end{align*}

%%%%%%%%%%%%%%%%%%%%%%%%%%%%%%%%%%%

\noindent 
The 7th crystal system (III) ($\bQ$-irreducible): 
\begin{align*}
{}^*G_{7,1,3}&=\langle \ta_3,\la_3,\cb\rangle\simeq \cA_4, &
{}^*G_{7,2,3}&=\langle \ta_3,\la_3,\cb,-I_3\rangle\simeq \cA_4\times \cC_2,\\
{}^*G_{7,3,3}&=\langle \ta_3,\la_3,\cb,-\be_3\rangle\simeq \cS_4, & 
{}^*G_{7,4,3}&=\langle \ta_3,\la_3,\cb,\be_3\rangle\simeq \cS_4, \\
{}^*G_{7,5,3}&=\langle \ta_3,\la_3,\cb,\be_3,-I_3\rangle\simeq \cS_4\times \cC_2.
\end{align*}

The $15$ groups ${}^*G_{i,j,k}$ with marked $*$ above are 
in the Kunyavskii's list of \cite[Theorem 1]{Kun87}, i.e. 
the only $15$ groups in $GL_3(\bZ)$ up to conjugation 
for which %the function field 
$K(x_1,x_2,x_3)^{G_{i,j,k}}$ %of $3$-dimensional algebraic tori 
are not retract $k$-rational 
under the faithful action of $G_{i,j,k}$ on $K$ (see Theorem \ref{thm:Kun}). 

%%%%%%%%%%%%%%%%%%%%%%%%%%%%%%%%%%%%%%%%%%%%%%%%%%%%%%%%%%%%%%%%%%%
%%%%%%%%%%%%%%%%%%%%%%%%%%%%%%%%%%%%%%%%%%%%%%%%%%%%%%%%%%%%%%%%%%%
%
\section{Preliminaries}\label{sepre}
%
%%%%%%%%%%%%%%%%%%%%%%%%%%%%%%%%%%%%%%%%%%%%%%%%%%%%%%%%%%%%%%%%%%%
%%%%%%%%%%%%%%%%%%%%%%%%%%%%%%%%%%%%%%%%%%%%%%%%%%%%%%%%%%%%%%%%%%%

%%%%%%%%%%%%%%%%%%%%%%%%%%%%%%%%%%%%%%%%%%%%%%%%%%%%%%%%%%%%%%%%%%%
%
\subsection{Reduction to lower degree} 
%
%%%%%%%%%%%%%%%%%%%%%%%%%%%%%%%%%%%%%%%%%%%%%%%%%%%%%%%%%%%%%%%%%%%

\begin{theorem}[{Miyata \cite[Lemma, page 70]{Miy71}, 
Ahmad, Hajja, Kang \cite[Theorem 3.1]{AHK00}}]\label{thAHK} 
Let $L$ be a field, $L(x)$ be the rational function field with 
one variable $x$ over $L$ and $G$ be a finite group acting on $L(x)$. 
Suppose that, for any $\sigma\in G$, 
$\sigma(L)\subset L$ and $\sigma(x)=a_{\sigma}x+b_{\sigma}$ 
where $a_{\sigma}$, $b_{\sigma} \in L$ and $a_{\sigma}\neq 0$. 
Then $L(x)^G=L^G(f)$ for some polynomial $f\in L[x]^G$. 
\end{theorem}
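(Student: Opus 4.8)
\textbf{Proof proposal for Theorem \ref{thAHK}.}

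The plan is to reduce the problem to the classical theorem on fixed fields of faithful linear actions of $G$ on a function field, by building up $f$ one variable at a time from the invariants of the $G$-stable ``lower-degree'' pieces. First I would set $L_0 = L^G$ and observe that, since $G$ acts on $L(x)$ with $\sigma(x) = a_\sigma x + b_\sigma$, the subspace $W = L_0\text{-span}\{1, x\}$ is not $G$-stable in general (the coefficients $a_\sigma, b_\sigma$ lie in $L$, not in $L_0$), so one cannot directly apply a linearization argument over $L_0$. Instead I would work over $L$: the key structural fact is that the $L$-subspace $V = L \cdot 1 + L \cdot x \subset L(x)$ is $G$-stable (as a set, not semilinearly), and the transcendence degree of $L(x)^G$ over $L_0$ is $1$, so it suffices to produce one element $f \in L[x]^G$ with $L(x)^G = L_0(f)$.

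The main step is to construct $f$. I would consider the $G$-submodule $U$ of the additive group $(L(x), +)$ generated by $x$ under the cocycle-twisted action $\sigma \cdot g = \sigma(g)$; concretely, the $G$-orbit of $x$ spans a finite-dimensional $L_0$-vector space inside $L[x]_{\le 1}$ (polynomials of degree $\le 1$), because each $\sigma(x)$ has degree $\le 1$. By Hilbert's Theorem 90 applied to the semilinear $G$-action on this finite-dimensional space (or, equivalently, by the normal basis theorem for $L/L_0$ combined with descent), there is an $L_0$-structure: the fixed module $(L[x]_{\le 1})^G$ contains an element $f$ whose leading coefficient (coefficient of $x$) is a unit and whose image generates appropriately. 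One then checks that $L(x) = L \cdot L_0(f)$ and that $L(x)^G = L_0(f)$: the inclusion $\supseteq$ is clear, and for $\subseteq$ one uses that $[L(x) : L_0(f)] = [L(x)^G : L_0(f)] \cdot [L(x) : L(x)^G] = [L(x)^G:L_0(f)]\cdot |G|$ while simultaneously $[L(x):L_0(f)] = [L(x):L(f)]\cdot[L(f):L_0(f)] = 1 \cdot [L:L_0]\le |G|$, forcing $[L(x)^G : L_0(f)] = 1$ once one verifies $L(f)=L(x)$, i.e. that $f$ really has degree $1$ in $x$. That last verification is where the hypothesis $a_\sigma \neq 0$ is essential: it guarantees the leading-coefficient map $\sigma \mapsto a_\sigma$ is a cocycle valued in $L^\times$, so by Hilbert 90 one can normalize a fixed $f$ of exact degree $1$ rather than being stuck with a fixed constant.

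The hard part will be the descent/cohomological step: showing that $(L[x]_{\le 1})^G$ contains an element of exact degree one in $x$. This amounts to the vanishing of $H^1(G, L^\times)$ (Hilbert 90 in the form for the possibly non-Galois-looking but genuinely Galois extension $L/L^G$, since $G$ acts faithfully — if not faithfully, pass to $G/\ker$, which does not change the fixed field) applied to the $1$-cocycle $\{a_\sigma\}$: pick $\mu \in L^\times$ with $a_\sigma = \mu / \sigma(\mu)$, then $\mu x$ has the property that $\sigma(\mu x) = \mu x + \sigma(\mu) b_\sigma$, reducing to the additive problem $H^1(G, L) = 0$ (normal basis theorem) to absorb the remaining affine term $\sigma(\mu)b_\sigma$, yielding $f = \mu x + c$ fixed with $c \in L$. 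Once $f$ is in hand, $L(f) \subseteq L(x)$ with $[L(x):L(f)] = 1$ since $f$ is $L$-linear in $x$ with unit leading coefficient, and the degree count above closes the argument.
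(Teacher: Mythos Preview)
The paper does not give its own proof of this theorem: it is quoted in the Preliminaries as a known result of Miyata and Ahmad--Hajja--Kang, so there is no proof in the paper to compare against. I will therefore only comment on the soundness of your argument.

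Your approach has a genuine gap. The Hilbert~90 step requires $G$ to act faithfully on $L$, and your parenthetical ``if not faithfully, pass to $G/\ker$, which does not change the fixed field'' does not repair this. Let $N=\ker(G\to\mathrm{Aut}(L))$. It is true that $L^G=L^{G/N}$, but the cocycle $\{a_\sigma\}$ is a $1$-cocycle for $G$, not for $G/N$; it descends to $G/N$ only if $a_\sigma=1$ for all $\sigma\in N$, which is not assumed. In general $H^1(G,L^\times)$ need not vanish when the action is not faithful. Concretely, take $L=k$ with $\mathrm{char}\,k\neq 2$, $G=\langle\sigma\rangle\simeq\cC_2$ acting trivially on $L$ and by $\sigma(x)=-x$. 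Then $a_\sigma=-1$ is not a coboundary unless $-1\in k^{\times 2}$, and indeed $L(x)^G=k(x^2)$: the minimal $G$-invariant polynomial has degree $2$, not $1$. So your plan to manufacture an $f$ of exact degree~$1$ cannot succeed in general, and the degree count $[L(x):L(f)]=1$ fails.

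The standard argument avoids cohomology entirely: one observes that $L[x]$ is $G$-stable and that the action preserves degrees, so $L[x]^G$ contains a nonconstant polynomial (for instance $\prod_{\sigma\in G}\sigma(x)$). Take $f\in L[x]^G$ nonconstant of minimal degree. Euclidean division in $L[x]$, together with its uniqueness, shows that any $h\in L[x]^G$ lies in $L^G[f]$, so $L[x]^G=L^G[f]$. Finally, for $g\in L(x)^G$ write $g=p/q$ with $p,q\in L[x]$ and multiply numerator and denominator by $\prod_{\sigma\neq 1}\sigma(q)$ to see $g\in\mathrm{Frac}(L[x]^G)=L^G(f)$. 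Your Hilbert~90 argument is a valid shortcut in the special case where $G$ is faithful on $L$ (and then it does yield $f$ of degree~$1$), but the theorem as stated needs the more robust argument above.
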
 

\begin{corollary} \label{cor:AHK} 
Let $k$ be a field with $\mathrm{char}$ $k\neq 2$ 
and $G$ be a finite subgroup of $GL_n(\bZ)$ 
acting on $K(x_1, \ldots, x_n)$ by purely quasi-monomial $k$-automorphisms. 
If $G$ is $\bZ$-reducible of $(n-1,1)$-type and 
$K(x_1, \ldots, x_{n-1})^G$ is $k$-rational, 
then $K(x_1, \ldots, x_n)^G$ is $k$-rational. 
\end{corollary}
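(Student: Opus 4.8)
The plan is to peel off the rank-one summand and reduce the problem to Theorem~\ref{thAHK}; the only genuine case distinction is whether $G$ acts on that summand trivially or through a sign character. Concretely, since $G$ is $\bZ$-reducible of $(n-1,1)$-type, the $G$-lattice $M=\bigoplus_{i=1}^{n}\bZ x_i$ splits as $M=M_1\oplus M_2$ over $\bZ[G]$ with ${\rm rank}_{\bZ}M_1=n-1$ and ${\rm rank}_{\bZ}M_2=1$; performing the corresponding monomial change of variables I may assume $M_1=\bigoplus_{i=1}^{n-1}\bZ x_i$ and $M_2=\bZ x_n$. As ${\rm Aut}_{\bZ}(M_2)=\{\pm1\}$, the action on $M_2$ is given by a character $\chi\colon G\to\{\pm1\}$, so $\sigma(x_n)=x_n^{\chi(\sigma)}$ for every $\sigma\in G$. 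Put $L=K(x_1,\dots,x_{n-1})$, a $G$-stable subfield with $K(x_1,\dots,x_n)=L(x_n)$; by hypothesis $F:=L^G=K(x_1,\dots,x_{n-1})^G$ is $k$-rational, say $F=k(u_1,\dots,u_{n-1})$. It then suffices to show that $K(x_1,\dots,x_n)^G$ is a rational function field in one variable over $F$.

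If $\chi$ is trivial, then $\sigma(x_n)=x_n$ for all $\sigma\in G$; regarding this as the affine action $\sigma(x_n)=a_\sigma x_n+b_\sigma$ over $L$ with $a_\sigma=1$ and $b_\sigma=0$, Theorem~\ref{thAHK} gives $L(x_n)^G=L^G(f)=F(f)$ for some $f\in L[x_n]^G$, whence $K(x_1,\dots,x_n)^G=k(u_1,\dots,u_{n-1},f)$ is $k$-rational.

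If $\chi$ is nontrivial, I set $N=\ker\chi$, a normal subgroup of $G$ of index $2$. Since $N$ fixes $x_n$, one has $K(x_1,\dots,x_n)^N=L(x_n)^N=L^N(x_n)$, and $G/N=\langle\tau\rangle\simeq\cC_2$ acts on $E(x_n)$, where $E:=L^N$, by $\tau(x_n)=x_n^{-1}$ together with the induced action on $E$, for which $E^{\langle\tau\rangle}=L^G=F$ and hence $[E:F]\le2$; thus $K(x_1,\dots,x_n)^G=E(x_n)^{\langle\tau\rangle}$. When $\tau$ acts trivially on $E$ we get $E=F$ and $E(x_n)^{\langle\tau\rangle}=F(x_n+x_n^{-1})$. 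When $\tau$ acts nontrivially on $E$, the extension $E/F$ is quadratic and, since ${\rm char}\,k\ne2$, I may write $E=F(\sqrt d)$ with $\tau(\sqrt d)=-\sqrt d$; putting $s=\sqrt d\,\dfrac{x_n-1}{x_n+1}$ one checks $\tau(s)=s$ and $x_n=\dfrac{\sqrt d+s}{\sqrt d-s}$, so $E(x_n)=E(s)$, $[E(s):F(s)]=2$ because $\sqrt d\notin F(s)$, and therefore $E(x_n)^{\langle\tau\rangle}=F(s)$. In all cases $K(x_1,\dots,x_n)^G$ is a rational function field in one variable over $F=k(u_1,\dots,u_{n-1})$, hence $k$-rational.

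I do not expect a real obstacle: morally this corollary is just the Miyata reduction of Theorem~\ref{thAHK}. The one point that needs care is the sign-character case, where the $\cC_2$-action $x_n\mapsto x_n^{-1}$ is twisted by the action on the quadratic extension $E/F$; the substitution $s=\sqrt d\,(x_n-1)/(x_n+1)$ turns this twisted action into an honest linear one, and it is precisely for the presentation $E=F(\sqrt d)$ that the hypothesis ${\rm char}\,k\ne2$ is used (in characteristic $2$ one would face an Artin--Schreier extension and the reduction would break down). The remaining ingredient is the routine degree identity $[L(x_n):L^N(x_n)]=[L:L^N]$ underlying $L(x_n)^N=L^N(x_n)$.
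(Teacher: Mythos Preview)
Your proof is correct and rests on the same idea as the paper's, namely linearizing the action on $x_n$ via a M\"obius substitution. The paper does it in one line: set $x_n'=(x_n-1)/(x_n+1)$, so that $\sigma(x_n')=\chi(\sigma)\,x_n'=\pm x_n'$ for every $\sigma\in G$, and apply Theorem~\ref{thAHK} directly with $L=K(x_1,\dots,x_{n-1})$; this single substitution handles both values of $\chi(\sigma)$ at once and bypasses your case distinction on $\chi$ as well as the two-step descent through $N=\ker\chi$ (your element $s=\sqrt d\,(x_n-1)/(x_n+1)$ is exactly $\sqrt d\cdot x_n'$).
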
 
\begin{proof} 
Put $x_n'=(x_n-1)/(x_n+1)$ and apply Theorem \ref{thAHK}. 
\end{proof} 

The following lemmas are a restatement of Hilbert $90$ 
(see also \cite[Lemma, page 70]{Miy71}). 
%and \cite[Proposition 1.1]{EM73}). 

\begin{theorem}[Endo, Miyata {\cite[Proposition 1.1]{EM73}}]\label{thEM}
Let $L/k$ be a finite Galois extension of fields 
with Galois group $G$ 
which acts on $L(x_1,\ldots,x_n)$ by $k$-automorphisms. 
Suppose that for any $\sigma\in G$, 
\[
\sigma(\alpha x_i)=\sigma(\alpha)\sum_{j=1}^na_{ij}(\sigma)x_j,
\quad \alpha, a_{ij}(\sigma)\in L. 
\]
Then $L(x_1,\ldots,x_n)^G$ is $k$-rational. 
\end{theorem}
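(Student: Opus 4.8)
The plan is to recognize the hypothesis as saying precisely that $G$ acts \emph{semilinearly} on the $n$-dimensional $L$-vector space $V=\bigoplus_{i=1}^n L\,x_i$, and then to trivialize this semilinear action by Galois descent. Indeed, taking $\alpha=1$ in the displayed relation gives $\sigma(x_i)=\sum_{j=1}^n a_{ij}(\sigma)x_j$ with $a_{ij}(\sigma)\in L$, and the full relation $\sigma(\alpha x_i)=\sigma(\alpha)\sum_j a_{ij}(\sigma)x_j$ is then automatic from $\sigma$ being a ring homomorphism; thus $\sigma(V)=V$ and $\sigma(\alpha v)=\sigma(\alpha)\,\sigma(v)$ for all $\alpha\in L$, $v\in V$.

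First I would invoke Speiser's theorem, the additive form of Hilbert $90$ (equivalently $H^1(G,GL_n(L))=1$): for such a semilinear action the fixed set $V^G=\{v\in V:\sigma(v)=v \text{ for all }\sigma\in G\}$ is a $k$-subspace with $\dim_k V^G=n$, and the natural map $L\otimes_k V^G\to V$ is an isomorphism. Concretely this produces an $L$-basis $y_1,\ldots,y_n$ of $V$ with $\sigma(y_i)=y_i$ for every $\sigma\in G$. I would either cite this directly as the promised restatement of Hilbert $90$, or sketch a proof: the matrices $(a_{ij}(\sigma))_\sigma$ form a $1$-cocycle in $GL_n(L)$, which is a coboundary, and the trivializing matrix records the change to an invariant basis; alternatively it follows from the normal basis theorem.

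Next I would change coordinates. Because $y_1,\ldots,y_n$ lie in $V=\bigoplus_i L x_i$ and are $L$-linearly independent, they are algebraically independent over $L$ and satisfy $L(x_1,\ldots,x_n)=L(y_1,\ldots,y_n)$. Under the $G$-action, $\sigma$ now fixes each $y_i$ and acts on the coefficient field $L$ through the Galois action with $L^G=k$. Hence $L(y_1,\ldots,y_n)/k(y_1,\ldots,y_n)$ is Galois with group $G$, and the invariant field is
\[
L(x_1,\ldots,x_n)^G=L(y_1,\ldots,y_n)^G=k(y_1,\ldots,y_n),
\]
which is $k$-rational. The last equality is the standard fact that adjoining $G$-fixed transcendentals commutes with taking invariants: any $G$-fixed element, written as a ratio of polynomials in the $y_i$ with coefficients in $L$, has all coefficients in $L^G=k$ after normalizing a common denominator.

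The main obstacle is the second step, the descent statement that the semilinear $G$-module $V$ admits an invariant $L$-basis; this is exactly Speiser's theorem / $H^1(G,GL_n(L))=1$ and carries the entire content of the lemma, the remaining steps being formal bookkeeping. The remark in the excerpt that this result is ``a restatement of Hilbert $90$'' signals that the authors intend precisely this descent as the engine of the proof.
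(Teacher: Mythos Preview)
Your argument is correct and is exactly the standard proof via Galois descent. Note that the paper itself does not supply a proof of this theorem: it is quoted from Endo--Miyata \cite[Proposition 1.1]{EM73}, with only the remark that it ``is a restatement of Hilbert $90$.'' Your write-up makes that remark precise, and nothing further is needed.
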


\begin{theorem}[Hajja, Kang {\cite[Theorem 1]{HK95}}]\label{thHK}
Let $L$ be a field and $G$ be a finite group 
acting on $L(x_1,\ldots,x_n)$. 
Suppose that\\
{\rm (i)} $\sigma(L)\subset L$ for any $\sigma\in G$;\\
{\rm (ii)} the restriction of the actions of $G$ to $L$ 
is faithful;\\
{\rm (iii)} for any $\sigma\in G$, 
\begin{align*}
\left(
\begin{array}{c}
\sigma(x_1)\\
\vdots\\
\sigma(x_n)
\end{array}
\right)
=A(\sigma)
\left(
\begin{array}{c}
x_1\\
\vdots\\
x_n
\end{array}
\right)+B(\sigma)
\end{align*}
where $A(\sigma)\in GL_n(L)$ 
and $B(\sigma)$ is an $n\times 1$ matrix over $L$. 
Then there exist $z_1,\ldots,z_n\in L(x_1,\ldots,x_n)$ 
such that $L(x_1,\ldots,x_n)=L(z_1,\ldots,z_n)$ 
with $\sigma(z_i)=z_i$ 
for any $\sigma\in G$ and $1\leq i\leq n$. 
\end{theorem}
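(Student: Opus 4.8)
The plan is to recognize this statement as the vector-space form of Hilbert's Theorem 90 (Speiser's theorem, equivalently the vanishing of $H^1(G,GL_{N}(L))$), applied after homogenizing the affine action. Indeed, the excerpt already signals this by calling the lemma ``a restatement of Hilbert $90$''.

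First I would reduce to a Galois situation. By hypothesis (ii) the finite group $G$ acts faithfully on $L$, so by Artin's theorem $L/L^G$ is a finite Galois extension with $\mathrm{Gal}(L/L^G)\simeq G$; write $k=L^G$. Since the desired conclusion is that $L(x_1,\ldots,x_n)=L(z_1,\ldots,z_n)$ with each $z_i$ fixed by $G$, it suffices to produce $n$ invariant affine-linear forms in the $x_i$ which, together with the constant $1$, span the space of affine-linear forms over $L$.

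Next I would homogenize. Introduce a symbol $x_0$ standing for the constant $1$, decree $\sigma(x_0)=x_0$, and consider the $(n+1)$-dimensional $L$-vector space $V=Lx_0\oplus Lx_1\oplus\cdots\oplus Lx_n$. The affine action of (iii) becomes an $L$-semilinear $G$-action on $V$: for $\alpha\in L$ and $v\in V$ one has $\sigma(\alpha v)=\sigma(\alpha)\,\sigma(v)$, and $\sigma$ carries the basis into $V$ through an $(n+1)\times(n+1)$ matrix $\widehat A(\sigma)\in GL_{n+1}(L)$ whose $x_0$-column records the affine parts $B(\sigma)$ and whose lower block is $A(\sigma)$. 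The associativity $(\sigma\tau)(x_i)=\sigma(\tau(x_i))$ translates into the cocycle identity $\widehat A(\sigma\tau)={}^\sigma\widehat A(\tau)\,\widehat A(\sigma)$, so $\sigma\mapsto\widehat A(\sigma)$ is a $1$-cocycle of $G$ valued in $GL_{n+1}(L)$ for the twisted $G$-action on $L$. Then I would invoke Hilbert $90$ for $GL_{n+1}$: since $H^1(G,GL_{n+1}(L))=1$ the cocycle splits, or equivalently (Speiser's theorem) the $k$-subspace of fixed vectors $V^G$ satisfies $\dim_k V^G=n+1$ and $L\otimes_k V^G\xrightarrow{\sim}V$; the engine behind this is Dedekind's linear independence of the characters $\sigma\colon L\to L$, used to show a suitable averaging operator yields $n+1$ linearly independent invariants. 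Because $1=x_0$ is itself $G$-fixed and $V^G\supset k\cdot 1$, I can select a $k$-basis $z_0=1,z_1,\ldots,z_n$ of $V^G$. As a $k$-basis of a $k$-form of $V$ it is automatically an $L$-basis of $V$, so $1,z_1,\ldots,z_n$ spans $V=L+\sum_i Lx_i$ over $L$; hence each $x_i$ is an $L$-affine combination of $z_1,\ldots,z_n$, the change of variables is invertible, and $L(x_1,\ldots,x_n)=L(z_1,\ldots,z_n)$ with $\sigma(z_i)=z_i$ for all $\sigma\in G$ and all $i$.

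The main obstacle is the descent step: showing that the fixed space $V^G$ is large enough to span $V$ over $L$ (equivalently that the cocycle is a coboundary), which is precisely the content of Hilbert $90$/Speiser's theorem and ultimately rests on linear independence of characters. A secondary point requiring care is the homogenization bookkeeping, namely incorporating the invariant constant $1$ into the chosen basis so that the resulting transformation is a genuine invertible affine change of the $x_i$ rather than one that degenerates the constant term.
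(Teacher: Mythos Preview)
Your proposal is correct. The paper does not supply its own proof of this theorem---it is quoted from \cite{HK95} as a preliminary result---but the sentence immediately preceding it (``The following lemmas are a restatement of Hilbert $90$'') signals precisely the mechanism you use: homogenize the affine action to a semilinear $G$-action on the $(n+1)$-dimensional $L$-space spanned by $1,x_1,\ldots,x_n$, and then apply Speiser's theorem (equivalently $H^1(G,GL_{n+1}(L))=1$) to obtain a $k$-basis of invariants, from which $z_1,\ldots,z_n$ are extracted after normalizing so that $1$ is one of the basis vectors. Your handling of the cocycle identity and of the bookkeeping with the constant term is accurate, so nothing is missing.
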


%%%%%%%%%%%%%%%%%%%%%%%%%%%%%%%%%%%%%%%%%%%%%%%%%%%%%%%%%%%%%%%%%%
%
\subsection{Explicit transcendental bases} 
%
%%%%%%%%%%%%%%%%%%%%%%%%%%%%%%%%%%%%%%%%%%%%%%%%%%%%%%%%%%%%%%%%%%

\begin{lemma}[{Hashimoto, Hoshi, Rikuna \cite[page 1176]{HHR08}, Hoshi, Kang, Kitayama \cite[Lemma 3.3]{HKK14}, see also \cite[Lemma 3.4]{HKY11}}]\label{lem:tau1} 
Let $k$ be a field and $\tau$ act on $k(x_1,x_2)$ 
by $k$-automorphisms 
\[
\tau : x_1\mapsto \frac{1}{x_1},\ x_2\mapsto \frac{1}{x_2}.
\] 
Then $k(x_1,x_2)^{\langle \tau \rangle}=k(t_1,t_2)$ where 
\begin{align*}
t_1=\frac{x_1x_2+1}{x_1+x_2},\ \ 
t_2=\left\{ 
\begin{array}{lc} 
\frac{x_1x_2-1}{x_1-x_2} & \text{if } \mathrm{char}\ k \neq 2,\vspace*{1mm} \\ 
\frac{x_1(x_2^2+1)}{x_2(x_1^2+1)} & \text{if } \mathrm{char}\ k=2.
\end{array} \right. 
\end{align*}
\end{lemma}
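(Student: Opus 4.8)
The final statement to prove is Lemma~\ref{lem:tau1}, which diagonalizes the $\langle\tau\rangle$-action $\tau:x_i\mapsto 1/x_i$ on $k(x_1,x_2)$.

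\medskip

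The plan is to produce explicit generators and check that $\tau$ fixes them and that they generate the whole field; since $[k(x_1,x_2):k(x_1,x_2)^{\langle\tau\rangle}]=2$ by Galois theory (assuming $\tau\neq\mathrm{id}$, which holds since $x_1\mapsto 1/x_1$ is nontrivial), it suffices to exhibit two $\tau$-invariant elements $t_1,t_2$ such that $k(t_1,t_2)$ has transcendence degree $2$ over $k$ and $k(x_1,x_2)$ is algebraic of degree $\le 2$ over $k(t_1,t_2)$. First I would verify invariance: for $t_1=(x_1x_2+1)/(x_1+x_2)$, applying $\tau$ gives $(1/(x_1x_2)+1)/(1/x_1+1/x_2)=(1+x_1x_2)/(x_2+x_1)=t_1$, a direct computation. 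In characteristic $\ne 2$, for $t_2=(x_1x_2-1)/(x_1-x_2)$ the same substitution yields $(1-x_1x_2)/(x_2-x_1)=(x_1x_2-1)/(x_1-x_2)=t_2$. In characteristic $2$, for $t_2=x_1(x_2^2+1)/(x_2(x_1^2+1))$, note $\tau$ swaps the roles so $\tau(t_2)=(1/x_1)(1/x_2^2+1)/((1/x_2)(1/x_1^2+1))=x_1(1+x_2^2)/(x_2(1+x_1^2))=t_2$.

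\medskip

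Next I would show $k(x_1,x_2)$ is generated over $k(t_1,t_2)$ in degree $\le 2$. In characteristic $\ne 2$, observe that $t_1+t_2$ and $t_1-t_2$ have simple forms: $t_1+t_2 = \frac{(x_1x_2+1)(x_1-x_2)+(x_1x_2-1)(x_1+x_2)}{x_1^2-x_2^2}$; expanding the numerator gives $2x_1^2x_2 - 2x_2 \cdot(\text{something})$, and a cleaner route is to compute $t_1 t_2$, $t_1+t_2$ directly and solve. Concretely, one checks $\frac{t_1+1}{t_1-1}\cdot\frac{t_2+1}{t_2-1}$ or similar rational combinations recover $x_1, x_2$ up to the swap $x_1\leftrightarrow x_2$ composed with inversion — but more efficiently, I would just verify that $x_1$ satisfies a quadratic over $k(t_1,t_2)$: from the two defining equations $t_1(x_1+x_2)=x_1x_2+1$ and $t_2(x_1-x_2)=x_1x_2-1$, adding and subtracting expresses $x_1x_2$ and $x_1+x_2$ (and $x_1-x_2$) as rational functions of $t_1,t_2$, so $x_1,x_2$ are the roots of $T^2-(x_1+x_2)T+x_1x_2\in k(t_1,t_2)[T]$, giving $[k(x_1,x_2):k(t_1,t_2)]\le 2$. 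Since $k(t_1,t_2)\subseteq k(x_1,x_2)^{\langle\tau\rangle}$ and the latter has index exactly $2$ in $k(x_1,x_2)$, we get equality. In characteristic $2$, the pair $(t_1,t_2)$ plays the analogous role: $t_1=(x_1x_2+1)/(x_1+x_2)$ still gives one symmetric-type relation, and $t_2$ encodes the asymmetry; I would check transcendence degree is $2$ (e.g. the Jacobian $\partial(t_1,t_2)/\partial(x_1,x_2)$ is nonzero, or that no algebraic relation holds) and that $x_1$ is quadratic over $k(t_1,t_2)$ by an explicit resolvent, then conclude by the index count as before.

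\medskip

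The main obstacle is the characteristic $2$ case, where the "trace-like" generator $(x_1x_2-1)/(x_1-x_2)$ degenerates (it equals $t_1$), so one must use the genuinely different generator $x_1(x_2^2+1)/(x_2(x_1^2+1))$, and verifying both that $k(t_1,t_2)=k(x_1,x_2)^{\langle\tau\rangle}$ and that these two are algebraically independent requires a slightly more careful computation — most cleanly via checking that the field extension $k(x_1,x_2)/k(t_1,t_2)$ has degree $2$ by producing the minimal polynomial of $x_1$ explicitly and confirming $t_1,t_2$ are algebraically independent over $k$. Everything else is routine substitution, and the whole lemma follows from the degree-$2$ index being forced by Galois theory once containment and the degree bound are established.
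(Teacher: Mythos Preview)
The paper does not give its own proof of this lemma; it is quoted from the cited references, so there is nothing to compare against directly. Your overall strategy---verify $\tau$-invariance of $t_1,t_2$, then use the Galois index $[k(x_1,x_2):k(x_1,x_2)^{\langle\tau\rangle}]=2$ to reduce to showing $[k(x_1,x_2):k(t_1,t_2)]\le 2$---is the standard one and is correct in outline.

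There is, however, a genuine error in your execution of the $\mathrm{char}\,k\ne 2$ case. You write that from $t_1(x_1+x_2)=x_1x_2+1$ and $t_2(x_1-x_2)=x_1x_2-1$, ``adding and subtracting expresses $x_1x_2$ and $x_1+x_2$ (and $x_1-x_2$) as rational functions of $t_1,t_2$.'' This is false: $x_1+x_2$ is \emph{not} $\tau$-invariant (since $\tau(x_1+x_2)=(x_1+x_2)/(x_1x_2)$), so it cannot lie in $k(t_1,t_2)$. What adding and subtracting actually gives you is two linear relations among the three quantities $x_1+x_2$, $x_1-x_2$, $x_1x_2$, which is one relation short. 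The fix is to eliminate differently: subtracting the two equations gives $(t_1-t_2)x_1+(t_1+t_2)x_2=2$, so $x_2\in k(t_1,t_2)(x_1)$; substituting this back into either original equation yields the quadratic
\[
(t_1-t_2)x_1^2 - 2(1-t_1t_2)x_1 + (t_1-t_2)=0,
\]
so $[k(x_1,x_2):k(t_1,t_2)]=[k(t_1,t_2)(x_1):k(t_1,t_2)]\le 2$ as you wanted. (An alternative, slightly cleaner route is to substitute $y_i=(x_i-1)/(x_i+1)$, turning $\tau$ into $y_i\mapsto -y_i$; then $t_1=(1+y_1y_2)/(1-y_1y_2)$ and $t_2=(y_1+y_2)/(y_1-y_2)$, so $k(t_1,t_2)=k(y_1y_2,\,y_1/y_2)=k(y_1^2,\,y_1y_2)$, which is visibly the fixed field.)

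For $\mathrm{char}\,k=2$ your sketch is only programmatic; the same elimination idea works but you should actually carry it out, since the whole point of the separate formula for $t_2$ is that the symmetric-function argument breaks down there.
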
 

\begin{lemma}[Hajja, Kang {\cite[Lemma 2.7]{HK94}}, {\cite[Theorem 2.4]{Kan04}}]
\label{lemab}
Let $k$ be a field and $-I_2$ act on $k(x_1,x_2)$ by 
\begin{align*}
-I_2\,:\, x_1\ \mapsto\ \frac{a}{x_1},\ x_2\ \mapsto\ \frac{b}{x_2},\quad a\in k^\times
\end{align*}
where $b=c(x_1+(a/x_1))+d$ such that $c,d\in k$ and at least one of $c$ and $d$ 
is non-zero. 
Then $k(x_1,x_2)^{\langle -I_2\rangle}=k(u_1,u_2)$ where
\begin{align*} 
u_1=\frac{x_1-\frac{a}{x_1}}{x_1x_2-\frac{ab}{x_1x_2}}=\frac{x_2(x_1^2-a)}{x_1^2x_2^2-ab},\quad 
u_2=\frac{x_2-\frac{b}{x_2}}{x_1x_2-\frac{ab}{x_1x_2}}=\frac{x_1(x_2^2-b)}{x_1^2x_2^2-ab}.
\end{align*}
\end{lemma}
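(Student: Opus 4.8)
The plan is to first record the crucial—though initially hidden—fact that although $b=c(x_1+a/x_1)+d$ involves $x_1$, it is itself \emph{fixed} by $-I_2$: since $x_1+a/x_1$ is sent to $a/x_1+x_1$, the element $b$ is an invariant. Consequently $-I_2$ is a genuine involution, because $x_1\mapsto a/x_1\mapsto x_1$ and $x_2\mapsto b/x_2\mapsto b/(b/x_2)=x_2$. The hypothesis that at least one of $c,d$ is nonzero makes $b\neq 0$ (as $x_1+a/x_1$ is transcendental over $k$), so the prescription is a well-defined $k$-automorphism; since it moves $x_1$, the group $\langle -I_2\rangle$ has order $2$ and acts faithfully, and Artin's theorem gives $[k(x_1,x_2):k(x_1,x_2)^{\langle -I_2\rangle}]=2$. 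A one-line substitution confirms that $u_1$ and $u_2$ are invariant, so $k(u_1,u_2)\subseteq k(x_1,x_2)^{\langle -I_2\rangle}$; it therefore suffices to prove the reverse bound $[k(x_1,x_2):k(u_1,u_2)]\le 2$, after which the sandwich forces $k(u_1,u_2)=k(x_1,x_2)^{\langle -I_2\rangle}$.

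Next I would pass to a basis of invariants and anti-invariants. Setting
\[
p=x_1+\tfrac{a}{x_1},\quad r=x_2+\tfrac{b}{x_2},\quad q=x_1-\tfrac{a}{x_1},\quad s=x_2-\tfrac{b}{x_2},\quad W=x_1x_2-\tfrac{ab}{x_1x_2},
\]
one checks that $p,r$ are invariant while $q,s,W$ are anti-invariant, and a short computation rewrites the given generators as $u_1=q/W$ and $u_2=s/W$. Because the product of two anti-invariants is invariant, the anti-invariants form a rank-$1$ module over the fixed field, so $\{1,W\}$ is a basis of $k(x_1,x_2)$ over $k(x_1,x_2)^{\langle -I_2\rangle}$; in particular $W^2$ is invariant, and $x_1=(p+q)/2=(p+u_1W)/2$, $x_2=(r+s)/2=(r+u_2W)/2$.

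The heart of the argument is then to recover $p$ and $r$ as explicit rational functions of $u_1,u_2$. The key syzygy is the identity $2W=ps+qr$, obtained by expanding $ps+qr$ in $x_1,x_2$; dividing by $W$ and using $q=u_1W$, $s=u_2W$ turns it into the linear relation $ru_1+pu_2=2$. Combining this with the two quadratic identities $q^2=p^2-4a$ and $s^2=r^2-4b$ (each equal to $u_i^2W^2$) and with $b=cp+d$, I would eliminate $W^2$ and $r$ to arrive at
\[
p\,(u_2+cu_1^2)=1+au_2^2-du_1^2,
\]
which solves for $p\in k(u_1,u_2)$; then $r=(2-pu_2)/u_1\in k(u_1,u_2)$, $b=cp+d\in k(u_1,u_2)$ and $W^2=(p^2-4a)/u_1^2\in k(u_1,u_2)$. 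Hence $x_1,x_2\in k(u_1,u_2)(W)$ with $W^2\in k(u_1,u_2)$, giving $[k(x_1,x_2):k(u_1,u_2)]\le 2$ and completing the proof.

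The hard part will be the elimination in the last step together with the attendant nondegeneracy: one must know that $u_1\neq 0$ and $u_2+cu_1^2\neq 0$ as elements of $k(x_1,x_2)$ so that the divisions are legitimate. The cleanest way to secure this is to verify at the outset, by a routine Jacobian computation, that $u_1,u_2$ are algebraically independent over $k$; then any nonzero polynomial expression in $u_1,u_2$—such as $u_1$ itself and $u_2+cu_1^2$—is automatically nonzero in $k(x_1,x_2)$. This is precisely where the hypothesis ``at least one of $c,d$ is nonzero'' is genuinely used, both to guarantee $b\neq 0$ and to keep the leading data of these expressions from degenerating. Everything else reduces to verifying a handful of polynomial identities in $x_1,x_2$.
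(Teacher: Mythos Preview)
The paper does not supply a proof of this lemma; it is quoted from \cite{HK94} and \cite{Kan04} as a known result and used later (in Case~6 of Section~\ref{seP2}) without further justification. So there is no ``paper's own proof'' to compare against.

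Your argument is a correct, self-contained verification. The key identities you use are exactly right: $ps+qr=2W$ gives the linear relation $ru_1+pu_2=2$, and eliminating $W^2$ and $r$ from $u_1^2W^2=p^2-4a$, $u_2^2W^2=r^2-4b$, $b=cp+d$ does indeed yield $p(u_2+cu_1^2)=1+au_2^2-du_1^2$. The nondegeneracy step is handled cleanly by your observation that once $u_1,u_2$ are shown to be algebraically independent over $k$, any nonzero $k$-polynomial in them---in particular $u_1$ and $u_2+cu_1^2$---is nonzero in $k(x_1,x_2)$.

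One small point to make explicit: your recovery $x_1=(p+q)/2$, $x_2=(r+s)/2$ tacitly assumes $\mathrm{char}\,k\neq 2$. The lemma as stated says only ``let $k$ be a field,'' but in this paper it is invoked solely under the standing hypothesis $\mathrm{char}\,k\neq 2$, so this is harmless here; still, it is worth flagging since the statement itself carries no characteristic restriction.
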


\begin{lemma}[{Hoshi, Kitayama, Yamasaki \cite[Theorem 3.13, Remark 3.14]{HKY11}}]\label{lem:mI3}
Let $k$ be a field and $-I_3$ act on $k(x_1,x_2,x_3)$ 
by $k$-automorphism 
\[ -I_3 : x_1\mapsto \frac{1}{x_1},\ x_2\mapsto \frac{1}{x_2},\ x_3\mapsto \frac{1}{x_3}. \] 
Then $k(x_1,x_2,x_3)^{\langle -I_3 \rangle}=k(t_1,t_2,t_3)$ where 
\[ t_1=\frac{x_1x_2+1}{x_1+x_2},\ \ t_2=\frac{x_2x_3+1}{x_2+x_3},\ \ t_3=\frac{x_3x_1+1}{x_3+x_1}. \]   
\end{lemma}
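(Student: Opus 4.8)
The plan is to sandwich the fixed field. Write $\sigma=-I_3$, set $E=k(x_1,x_2,x_3)^{\langle\sigma\rangle}$ and $F=k(t_1,t_2,t_3)$. First I would check the inclusion $F\subseteq E$: applying $\sigma\colon x_i\mapsto 1/x_i$ to, say, $t_1=(x_1x_2+1)/(x_1+x_2)$ and cancelling the common factor $1/(x_1x_2)$ from numerator and denominator returns $t_1$, and likewise for $t_2,t_3$; thus each $t_i$ is $\sigma$-invariant and $F\subseteq E$. Since $\sigma$ is an involution acting nontrivially (as $\sigma(x_1)=1/x_1\ne x_1$), Artin's theorem gives $[k(x_1,x_2,x_3):E]=2$. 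Hence it suffices to prove $[k(x_1,x_2,x_3):F]\le 2$, for then the tower $F\subseteq E\subseteq k(x_1,x_2,x_3)$ forces $E=F$, which is the assertion.

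The degree bound is obtained by recovering the $x_i$ from the $t_i$. Clearing denominators in the defining relations of $t_1$ and $t_3$ gives
\[
x_2=\frac{t_1x_1-1}{x_1-t_1},\qquad x_3=\frac{t_3x_1-1}{x_1-t_3},
\]
where the denominators are nonzero in $k(x_1,x_2,x_3)$ (e.g.\ $x_1=t_1$ would force $x_1^2=1$). Thus $k(x_1,x_2,x_3)=F(x_1)$, and only the degree of $x_1$ over $F$ remains. Substituting the two expressions above into the relation $t_2(x_2+x_3)=x_2x_3+1$ coming from $t_2$, and clearing $(x_1-t_1)(x_1-t_3)$, produces the palindromic relation
\[
Ax_1^2+Bx_1+A=0,\qquad A=t_2(t_1+t_3)-(t_1t_3+1),\quad B=2\bigl[(t_1+t_3)-t_2(t_1t_3+1)\bigr];
\]
its palindromic shape is forced, since the conjugate root must be $\sigma(x_1)=1/x_1$. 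A direct computation re-expresses the leading coefficient in the $x_i$ as
\[
A=\frac{2\,x_1(x_2^2-1)(x_3^2-1)}{(x_2+x_3)(x_1+x_2)(x_1+x_3)},
\]
which is nonzero as long as ${\rm char}\,k\ne 2$. Then $x_1+1/x_1=-B/A\in F$, so $x_1$ has degree at most $2$ over $F$; hence $[k(x_1,x_2,x_3):F]\le 2$ and the squeeze above yields $E=F$.

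The one delicate point---and the reason Lemma \ref{lem:tau1} already splits into two cases---is characteristic $2$. The leading coefficient $A$ carries an intrinsic factor $2$, so the relation $Ax_1^2+Bx_1+A=0$ degenerates to $0=0$ when ${\rm char}\,k=2$; in fact there $t_1t_2+t_2t_3+t_3t_1=1$, so $t_1,t_2,t_3$ become algebraically dependent and the three generators above no longer suffice. In this case one replaces $t_3$ by a suitable variant (in the spirit of the characteristic-$2$ formula of Lemma \ref{lem:tau1}, cf.\ \cite[Remark 3.14]{HKY11}) and reruns the same two-step argument. Away from characteristic $2$ the only things requiring care are the non-vanishing of the denominators used to solve for $x_2,x_3$ and the non-vanishing of $A$, both of which are immediate from the explicit formula for $A$ displayed above.
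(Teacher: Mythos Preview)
The paper does not prove this lemma; it is quoted from \cite[Theorem 3.13, Remark 3.14]{HKY11} and used as a black box. Your sandwich argument is correct and self-contained for ${\rm char}\,k\ne 2$: the invariance of the $t_i$ is immediate, Artin gives $[k(x_1,x_2,x_3):E]=2$, and your expression of $x_2,x_3$ as M\"obius transforms of $x_1$ together with the palindromic quadratic $Ax_1^2+Bx_1+A=0$ cleanly yields $[k(x_1,x_2,x_3):F]\le 2$. Your closed form $A=\dfrac{2x_1(x_2^2-1)(x_3^2-1)}{(x_2+x_3)(x_1+x_2)(x_1+x_3)}$ checks out and makes both the non-degeneracy in odd characteristic and the collapse in characteristic $2$ transparent.

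One point to flag for a reader: as stated, the lemma claims the result for an arbitrary field $k$, but your own computation shows that in characteristic $2$ the elements $t_1,t_2,t_3$ satisfy $t_2(t_1+t_3)=t_1t_3+1$ and hence are algebraically dependent, so the displayed generators cannot work there. You handle this correctly by pointing to the modified generator in \cite[Remark~3.14]{HKY11} (analogous to the characteristic-$2$ branch of Lemma~\ref{lem:tau1}); it would strengthen the write-up to state that replacement explicitly, e.g.\ $t_3'=\dfrac{x_3(x_1^2+1)(x_2^2+1)}{x_1x_2(x_3^2+1)}$ or the analogous choice from the reference, and verify the same degree bound. Since every use of this lemma in the present paper occurs under the standing hypothesis ${\rm char}\,k\ne 2$, the argument you give already suffices for the paper's purposes.
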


\begin{lemma}[Masuda {\cite{Mas55}, Hoshi, Kang \cite[Theorem 2.2]{HK10}}] 
\label{lem:sigma3B} 
Let $k$ be a field and $\cb$ act on $k(x_1,x_2,x_3)$ 
by $k$-automorphism 
\[
\cb : x_1\mapsto x_2\mapsto x_3\mapsto x_1.
\] 
Then $k(x_1,x_2,x_3)^{\langle \cb \rangle}=k(s_1,u,v)$ where 
\begin{align*} 
s_1 &=s_1(x_1,x_2,x_3)= x_1+x_2+x_3, \\ 
u &=u(x_1,x_2,x_3)= \frac{x_1x_2^2+x_2x_3^2+x_3x_1^2-3x_1x_2x_3}{x_1^2+x_2^2+x_3^2-x_1x_2-x_2x_3-x_3x_1}, \\ 
v &=v(x_1,x_2,x_3)= \frac{x_1^2x_2+x_2^2x_3+x_3^2x_1-3x_1x_2x_3}{x_1^2+x_2^2+x_3^2-x_1x_2-x_2x_3-x_3x_1}. 
\end{align*} 
\end{lemma}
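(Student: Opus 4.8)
The plan is to bracket the fixed field between two explicit subfields and then match degrees. First I would check that $s_1,u,v$ really are $\langle\cb\rangle$-invariant. The common denominator $D=x_1^2+x_2^2+x_3^2-x_1x_2-x_2x_3-x_3x_1$ is symmetric, hence cyclic-invariant, and each numerator is visibly unchanged by the shift $x_1\mapsto x_2\mapsto x_3\mapsto x_1$; for instance the denominator and $x_1x_2x_3$ are symmetric while $x_1x_2^2+x_2x_3^2+x_3x_1^2$ and $x_1^2x_2+x_2^2x_3+x_3^2x_1$ are each permuted to themselves by $\cb$. This gives the inclusion $k(s_1,u,v)\subseteq k(x_1,x_2,x_3)^{\langle\cb\rangle}=:F$. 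Since $\cb$ has order $3$ and acts faithfully, Artin's theorem gives $[k(x_1,x_2,x_3):F]=3$, so it remains only to prove the reverse inclusion $F\subseteq k(s_1,u,v)$.

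For the reverse inclusion I would pass through the elementary symmetric polynomials $e_1,e_2,e_3$ of $x_1,x_2,x_3$. Because $\langle\cb\rangle=\cA_3$ has index $2$ in $\cS_3$, applying Artin's theorem to both groups yields $[F:k(e_1,e_2,e_3)]=2$. Moreover $u$ is $\cb$-invariant, while the transposition $x_1\leftrightarrow x_2$ carries $u$ to $v$, and $u\neq v$ in the function field since $u-v=(x_1-x_2)(x_2-x_3)(x_3-x_1)/D\neq 0$; hence $u\notin k(e_1,e_2,e_3)$ and therefore $F=k(e_1,e_2,e_3)(u)$. As $u\in k(s_1,u,v)$ and $e_1=s_1$, the whole problem collapses to the single claim that $e_2$ and $e_3$ lie in $k(s_1,u,v)$.

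To prove that claim I would use three identities, valid in all characteristics: $e_1=s_1$; the symmetric relation $(u+v)D=e_1e_2-9e_3$ with $D=e_1^2-3e_2$ (obtained by adding the two numerators); and the antisymmetric relation $(u-v)^2D^2=\Delta$, where $\Delta=\prod_{i<j}(x_i-x_j)^2$ is the discriminant of $T^3-e_1T^2+e_2T-e_3$, expressed through $e_1,e_2,e_3$. The second identity is linear in $e_3$, so I solve it for $e_3$ in terms of $e_2$ and substitute into the third, obtaining a single polynomial equation for $e_2$ with coefficients in $k(s_1,u,v)$.

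The one delicate point, and the step I expect to be the real obstacle, is that this equation for $e_2$ is a priori \emph{cubic} (its leading term comes from the $-4e_2^3$ of $\Delta$), so naively $e_2$ would be only algebraic of degree $3$ over $k(s_1,u,v)$ rather than rational. The resolution is that $e_2=\tfrac13 e_1^2$, i.e.\ $D=0$, is a spurious double root: at that value the second identity forces $e_3=\tfrac1{27}e_1^3$, making $T^3-e_1T^2+e_2T-e_3=(T-\tfrac13 e_1)^3$ a perfect cube, so that $\Delta$ vanishes together with $D$ and does so to second order. Thus the cubic factors as a constant times $(e_2-\tfrac13 e_1^2)^2$ times a factor linear in $e_2$, and dividing out the extraneous $D^2\propto(e_2-\tfrac13 e_1^2)^2$, which is legitimate since $D\neq 0$ in the function field, leaves a linear equation that determines $e_2$, and then $e_3$, as explicit rational functions of $s_1,u,v$. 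This gives $F\subseteq k(s_1,u,v)$ and hence equality. Finally I would note that in characteristic $3$ one has $D=s_1^2$ identically, so $e_2=s_1(u+v)$ outright and the discriminant relation becomes linear in $e_3$; no spurious factor appears and the computation is immediate, so the argument is uniform across all characteristics.
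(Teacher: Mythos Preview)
The paper does not supply its own proof of this lemma; it is quoted from Masuda and Hoshi--Kang, so there is no in-paper argument to compare against. Your strategy---sandwich $k(s_1,u,v)\subseteq F\subseteq k(x_1,x_2,x_3)$, identify $F=k(e_1,e_2,e_3)(u)$ via Artin, and then recover $e_2,e_3$ from the two identities $(u+v)D=e_1e_2-9e_3$ and $(u-v)^2D^2=\Delta$---is sound and works as written in characteristic $\neq 2$. Your double-root claim is correct: at $e_2=e_1^2/3$ one has $e_3=e_1^3/27$, and there the \emph{gradient} of $\Delta$ in $(e_2,e_3)$ vanishes (both partials are zero), so along your one-parameter family $\Delta$ vanishes to order $\geq 2$; this is the precise reason behind your ``perfect cube'' heuristic.

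The gap is characteristic $2$. There $u-v=u+v$, so your third identity $(u-v)^2D^2=\Delta$ is literally the square of the second one: in characteristic $2$, $\Delta=e_1^2e_2^2+e_3^2=(e_1e_2+e_3)^2=((u+v)D)^2$, hence it contributes no new relation and your cubic in $e_2$ degenerates to $0=0$. Your final sentence (``the argument is uniform across all characteristics'') is therefore not justified. The fix is to replace $(u-v)^2$ by the genuinely independent invariant $uv$: from $uv\,D^2=e_2^3+e_1^3e_3-9e_1e_2e_3+27e_3^2$ (valid in every characteristic) together with $e_3=(u+v)D+e_1e_2$ and $D=e_1^2+e_2$ one finds, after cancelling $D^2$, the closed formula $e_2=uv+s_1(u+v)+(u+v)^2$ in characteristic $2$, and then $e_3$ follows. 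With this amendment your proof is complete.
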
 

\begin{lemma}[Hoshi, Kitayama, Yamasaki {\cite[Theorem 3.16]{HKY11}}]  
\label{lem:tau1lambda1} 
Let $k$ be a field with {\rm char} $k \neq 2$ 
and $\tau_1$, $\lambda_1$ 
act on $k(x_1,x_2,x_3)$ by $k$-automorphisms defined by  
\begin{align*} 
\tau_1 &: x_1\mapsto \frac{1}{x_1}, \ x_2\mapsto \frac{1}{x_2}, \ x_3\mapsto x_3, & 
\lambda_1 &: x_1\mapsto \frac{1}{x_1}, \ x_2\mapsto x_2, \ x_3\mapsto \frac{1}{x_3}. 
\end{align*} 
Then $k(x_1,x_2,x_3)^{\langle \tau_1, \lambda_1 \rangle}=k(t_1,t_2,t_3)$ where 
\begin{align*} 
t_1 &= \frac{-x_1+x_2+x_3-x_1x_2x_3}{1-x_1x_2+x_2x_3-x_3x_1}, & 
t_2 &= \frac{x_1-x_2+x_3-x_1x_2x_3}{1-x_1x_2-x_2x_3+x_3x_1}, & 
t_3 &= \frac{x_1+x_2-x_3-x_1x_2x_3}{1+x_1x_2-x_2x_3-x_3x_1}.  
\end{align*} 
\end{lemma}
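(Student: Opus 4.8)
The plan is to linearize the two commuting involutions by a M\"obius change of variables, after which the invariant theory reduces to that of an elementary abelian sign-change action and becomes completely transparent. Since $\ta_1$ and $\la_1$ commute and each is an involution, $\langle\ta_1,\la_1\rangle\simeq\cC_2\times\cC_2$ acts faithfully, so by Artin's theorem $[k(x_1,x_2,x_3):k(x_1,x_2,x_3)^{\langle\ta_1,\la_1\rangle}]=4$. First I would introduce new coordinates
\[
y_i=\frac{x_i-1}{x_i+1}\qquad(i=1,2,3),
\]
which is legitimate precisely because ${\rm char}\,k\neq 2$: the inverse is $x_i=(1+y_i)/(1-y_i)$, so $k(x_1,x_2,x_3)=k(y_1,y_2,y_3)$. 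A direct substitution shows that $x_i\mapsto 1/x_i$ becomes $y_i\mapsto-y_i$, while $x_i\mapsto x_i$ stays $y_i\mapsto y_i$. Hence in the $y$-coordinates
\[
\ta_1:(y_1,y_2,y_3)\mapsto(-y_1,-y_2,y_3),\qquad
\la_1:(y_1,y_2,y_3)\mapsto(-y_1,y_2,-y_3),
\]
so $\langle\ta_1,\la_1\rangle$ acts by the even sign changes (those flipping an even number of the $y_i$).

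For this sign action the invariant field is immediate: a monomial $y_1^ay_2^by_3^c$ is fixed if and only if $a,b,c$ have the same parity, so the fixed field is $k(y_1^2,y_2^2,y_3^2,\,y_1y_2y_3)$. The relation $(y_1y_2y_3)^2=y_1^2y_2^2y_3^2$ lets one drop $y_3^2$, exhibiting it as the rational field $k(y_1^2,y_2^2,y_1y_2y_3)$. A quick degree count confirms this is the full fixed field: $[k(y_1,y_2,y_3):k(y_1^2,y_2^2,y_3^2)]=8$, and adjoining $y_1y_2y_3$ (whose square already lies in $k(y_1^2,y_2^2,y_3^2)$, while $y_1y_2y_3$ itself does not) contributes a factor $2$, giving index $8/2=4=|\langle\ta_1,\la_1\rangle|$.

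It then remains to match the stated generators against these invariants, which is the one computational step. Writing the numerator and denominator of $t_1$ as $(x_2+x_3)-x_1(1+x_2x_3)$ and $(1+x_2x_3)-x_1(x_2+x_3)$ and substituting $x_i=(1+y_i)/(1-y_i)$, the common factor $(1-y_1)(1-y_2)(1-y_3)$ cancels and one finds the clean form
\[
t_1=\frac{y_1+y_2y_3}{y_1-y_2y_3},\qquad
t_2=\frac{y_2+y_1y_3}{y_2-y_1y_3},\qquad
t_3=\frac{y_3+y_1y_2}{y_3-y_1y_2},
\]
the latter two by the same cancellation with the indices permuted. Setting $a_i=(t_i-1)/(t_i+1)$ inverts the M\"obius relation to $a_1=y_2y_3/y_1$, $a_2=y_1y_3/y_2$, $a_3=y_1y_2/y_3$, whence $k(t_1,t_2,t_3)=k(a_1,a_2,a_3)$. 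Now $a_1a_2=y_3^2$, $a_2a_3=y_1^2$, $a_1a_3=y_2^2$ and $a_1a_2a_3=y_1y_2y_3$, giving $k(a_1,a_2,a_3)\supseteq k(y_1^2,y_2^2,y_3^2,y_1y_2y_3)$; conversely $a_i=(y_1y_2y_3)/y_i^2$ yields the reverse inclusion. Therefore $k(t_1,t_2,t_3)$ equals the fixed field computed above, which is the assertion.

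I do not expect a genuine obstacle here; the crux is simply verifying the passage $t_i\mapsto(y_i+\text{(product of the other two)})/(y_i-\text{(product)})$, which is the routine factor cancellation indicated. The $G$-invariance of each $t_i$ is then transparent in the $y$-coordinates, since each $a_i=y_jy_l/y_i$ is visibly unchanged when two of the three signs are flipped (either the two flipped variables are $y_j,y_l$, leaving $a_i$ fixed, or they are $y_i$ and one of $y_j,y_l$, flipping numerator and denominator once each). Everything else is the elementary field-degree bookkeeping above.
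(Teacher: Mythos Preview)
Your argument is correct. The paper itself does not prove this lemma but merely quotes it from \cite[Theorem 3.16]{HKY11}, so there is no in-paper proof to compare against; your linearization $y_i=(x_i-1)/(x_i+1)$ reducing the action to even sign changes, the degree-$4$ index count identifying the fixed field with $k(y_1^2,y_2^2,y_3^2,y_1y_2y_3)$, and the verification that $t_i=(y_i+y_jy_l)/(y_i-y_jy_l)$ (hence $k(t_1,t_2,t_3)=k(y_2y_3/y_1,\,y_1y_3/y_2,\,y_1y_2/y_3)$ equals that field) are all sound and give a clean self-contained proof.
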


\begin{lemma}[Hoshi, Kitayama, Yamasaki {\cite[Lemma 3.8]{HKY11}}]\label{lemV42}
Let $k$ be a field and $G_{3,1,4}=\langle\ta_3,\la_3\rangle$ 
act on $k(x_1,x_2,x_3)$ by 
\begin{align*}
\ta_3\,&:\, x_1\mapsto\ x_2\ \mapsto x_1,\ x_3\mapsto\ \frac{c}{x_1x_2x_3}\ \mapsto x_3,\\
\la_3\,&:\, x_1\mapsto\ x_3\ \mapsto x_1,\ x_2\mapsto\ \frac{c}{x_1x_2x_3}\ \mapsto x_2,\quad c\in k^\times. 
\end{align*}
Define $w=c/(x_1x_2x_3)$. 
Then $k(x_1,x_2,x_3)^{G_{3,1,4}}=k(x_1,x_2,x_3)^{\langle \ta_3,\la_3\rangle}=k(v_1,v_2,v_3)$ where 
\begin{align*}
v_1\ &=\ \frac{x_1+x_2-x_3-w}{x_1x_2-x_3w}\ =\ \frac{c-x_1x_2x_3(x_1+x_2-x_3)}{x_3(c-x_1^2x_2^2)},\\
v_2\ &=\ \frac{x_1-x_2-x_3+w}{x_1w-x_2x_3}\ =\ \frac{c-x_1x_2x_3(-x_1+x_2+x_3)}{x_1(c-x_2^2x_3^2)},\\
v_3\ &=\ \frac{x_1-x_2+x_3-w}{x_1x_3-x_2w}\ =\ \frac{c-x_1x_2x_3(x_1-x_2+x_3)}{x_2(c-x_1^2x_3^2)}.
\end{align*}
\end{lemma}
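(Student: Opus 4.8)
The plan is to realise $G_{3,1,4}$ as the Klein four-subgroup of a natural $\cS_4$-action and then deduce the statement from Galois theory. First I would set $w=c/(x_1x_2x_3)$, so that $x_1x_2x_3w=c$. Since this relation is symmetric in the four elements $x_1,x_2,x_3,w$, the symmetric group $\cS_4$ acts on $k(x_1,x_2,x_3)$ by $k$-automorphisms permuting $\{x_1,x_2,x_3,w\}$ (any three of the four freely generate the field and the fourth is a consistent rational function of them), and with the labelling $x_1=1$, $x_2=2$, $x_3=3$, $w=4$ one has $\ta_3=(12)(34)$, $\la_3=(13)(24)$, so that $G_{3,1,4}=\langle\ta_3,\la_3\rangle$ is exactly the normal Klein four-group $V=\{e,(12)(34),(13)(24),(14)(23)\}\trianglelefteq\cS_4$. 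Next, naming $\phi_1=x_1+x_2-x_3-w$, $\phi_2=x_1-x_2-x_3+w$, $\phi_3=x_1-x_2+x_3-w$ and $D_1=x_1x_2-x_3w$, $D_2=x_1w-x_2x_3$, $D_3=x_1x_3-x_2w$, the first displayed expression for each $v_i$ in the statement is simply $v_i=\phi_i/D_i$, and one checks it agrees with the second displayed expression using $x_1x_2x_3w=c$ (e.g. $v_1=x_1x_2(x_1+x_2-x_3-w)/(x_1^2x_2^2-c)$ rearranges into the second form). For each $i$ the factors $\phi_i$ and $D_i$ are semi-invariants of $V$ affording one and the same nontrivial character of $V$ — the three pairs afford the three distinct nontrivial characters — so each $v_i$ lies in $k(x_1,x_2,x_3)^{G_{3,1,4}}$.

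The decisive point I would then establish is that $\cS_4$ does not merely fix the $v_i$ up to $V$, but genuinely \emph{permutes} $v_1,v_2,v_3$: a direct calculation shows that the transposition $(12)$ (the automorphism $x_1\leftrightarrow x_2$) sends $(v_1,v_2,v_3)$ to $(v_1,v_3,v_2)$, while $(13)$ sends $(v_1,v_2,v_3)$ to $(v_2,v_1,v_3)$. Hence the induced homomorphism $\cS_4\to\mathrm{Sym}\{v_1,v_2,v_3\}$ is onto (its image contains the transpositions $(v_2v_3)$ and $(v_1v_2)$), so its kernel has order $4$; since it contains $V$, the kernel equals $V$. Now put $F=k(x_1,x_2,x_3)^{\cS_4}$, which is $k(e_1,e_2,e_3)$ for the first three elementary symmetric functions of $x_1,x_2,x_3,w$ (here $e_4=c$ is a constant, and $e_1,e_2,e_3$ are algebraically independent over $k$ since $F$ has transcendence degree $3$). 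By Artin's theorem $k(x_1,x_2,x_3)/F$ is Galois with group $\cS_4$, so, $V$ being normal, $E:=k(x_1,x_2,x_3)^{G_{3,1,4}}=k(x_1,x_2,x_3)^{V}$ is Galois over $F$ with group $\cS_4/V\cong\cS_3$, and this $\cS_3$ acts on the subfield $L:=k(v_1,v_2,v_3)\subseteq E$ through the permutation action on $v_1,v_2,v_3$. An element of $\mathrm{Gal}(E/F)=\cS_3$ fixes $L$ pointwise if and only if it fixes each $v_i$, i.e.\ only if it is the identity; therefore $\mathrm{Gal}(E/L)=1$ and $L=E$. This gives $k(x_1,x_2,x_3)^{G_{3,1,4}}=k(v_1,v_2,v_3)$, and, as a by-product, $v_1,v_2,v_3$ are algebraically independent over $k$.

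I do not anticipate a serious obstacle; everything reduces to three elementary verifications — matching $v_i=\phi_i/D_i$ with the two displayed forms, checking $G_{3,1,4}$-invariance of the $v_i$, and checking that $(12)$ and $(13)$ permute $v_1,v_2,v_3$ as claimed — together with the standard facts that $V\trianglelefteq\cS_4$ with $\cS_4/V\cong\cS_3$ and that $\cS_4$ acts faithfully on $k(x_1,x_2,x_3)$. The argument is uniform in the characteristic: when $\mathrm{char}\,k=2$ one has $\phi_1=\phi_2=\phi_3=x_1+x_2+x_3+w$ and each $D_i$ becomes a sum of two products of disjoint pairs, but the semi-invariance of $(\phi_i,D_i)$ and the permutation behaviour of the $v_i$ are unaffected, so the reasoning applies verbatim. (A more computational alternative — bounding $[k(x_1,x_2,x_3):k(v_1,v_2,v_3)]$ by $4$ by inverting the $v_i$ with the help of the resolvent cubic of $T^4-e_1T^3+e_2T^2-e_3T+c$, then invoking $[k(x_1,x_2,x_3):k(x_1,x_2,x_3)^{G_{3,1,4}}]=4$ — is also available, but the Galois-theoretic route avoids the explicit inversion.)
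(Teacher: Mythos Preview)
The paper does not prove this lemma at all --- it is quoted from \cite[Lemma~3.8]{HKY11} in the Preliminaries section --- so there is no ``paper's own proof'' to compare against. I will therefore evaluate your argument on its own merits.

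Your framework is elegant: realising $G_{3,1,4}$ as the normal Klein four-group $V\trianglelefteq\cS_4$ acting on $\{x_1,x_2,x_3,w\}$, checking that each $v_i$ is $V$-invariant, and observing that $\cS_4/V\cong\cS_3$ permutes $v_1,v_2,v_3$ are all correct and nicely put. The problem is the final Galois step. You write that an element of $\mathrm{Gal}(E/F)\cong\cS_3$ fixing $L=k(v_1,v_2,v_3)$ must be trivial, ``therefore $\mathrm{Gal}(E/L)=1$ and $L=E$''. But the conclusion $L=E$ from the Galois correspondence requires $L$ to be an \emph{intermediate} field of the Galois extension $E/F$, i.e.\ $F\subseteq L$. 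You never verify this, and it is not automatic. What your argument actually establishes is only that the compositum satisfies $FL=E$; without $F\subseteq L$ one cannot pass from $FL=E$ to $L=E$. Concretely, you would need to show that the elementary symmetric functions $e_1,e_2,e_3$ of $x_1,x_2,x_3,w$ lie in $k(v_1,v_2,v_3)$. Even granting that $v_1,v_2,v_3$ are algebraically independent (so $L^{\cS_3}=k(s_1(v),s_2(v),s_3(v))\subseteq F$ are both rational of transcendence degree $3$), the inclusion could a priori be strict --- one then gets $[E:L]=[F:L^{\cS_3}]$, and nothing in your argument bounds this from above.

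The ``computational alternative'' you mention at the end --- bounding $[k(x_1,x_2,x_3):k(v_1,v_2,v_3)]\le 4$ by explicit inversion, then matching it against $[k(x_1,x_2,x_3):E]=4$ --- is not merely an alternative: some explicit input of this kind (or a direct verification that $e_1,e_2,e_3\in k(v_1,v_2,v_3)$) is \emph{needed} to close the gap. Once that is supplied, your Galois-theoretic packaging is a clean way to organise the proof and to get the algebraic independence of $v_1,v_2,v_3$ for free; the presentation in \cite{HKY11} proceeds by the more direct computational route.
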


%%%%%%%%%%%%%%%%%%%%%%%%%%%%%%%%%%%%%%%%%%%%%%%%%%%%%%%%%%%%%%%%%%%
%
\subsection{Rationality of quadrics and conic bundles}\label{ssConic} 
%
%%%%%%%%%%%%%%%%%%%%%%%%%%%%%%%%%%%%%%%%%%%%%%%%%%%%%%%%%%%%%%%%%%%

\begin{theorem}[{Ohm \cite[Lemma 5.7]{Ohm94}, see also 
Hajja, Kang, Ohm \cite[Proposition 2.1]{HKO94}}]
\label{thOhm}
Let $k$ be a field with {\rm char} $k\neq 2$ 
and $K$ be a function field of a quadric 
$q=a_0+a_1x_1^2+\cdots+ a_nx_n^2$ over $k$, 
i.e. $K$ is $k$-isomorphic to $k[x_1,\ldots,x_n]/(q)$. 
Then the following conditions are equivalent:\\
{\rm (i)} $a_0x_0^2+a_1x_1^2+\cdots+a_nx_n^2$ has a non-trivial $k$-zero;\\
{\rm (ii)} $a_0+a_1x_1^2+\cdots+a_nx_n^2$ has a $k$-zero;\\
{\rm (iii)} $K$ is $k$-rational;\\
{\rm (iv)} $K$ is $k$-unirational.
\end{theorem}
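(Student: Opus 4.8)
The plan is to prove the cyclic chain $(iii)\Rightarrow(iv)\Rightarrow(i)\Rightarrow(iii)$ together with the self-contained equivalence $(i)\Leftrightarrow(ii)$; here $(iii)\Rightarrow(iv)$ is immediate from the chain of implications among rationality notions recalled in the introduction, so the content lies in the other three arrows. Before starting I would reduce to the case in which every $a_i$ lies in $k^\times$: if $a_i=0$ for some $i\ge 1$, then $x_i$ is transcendental over the field generated by the remaining variables, so $K=L(x_i)$ with $L$ the function field of the quadric obtained by deleting the term $a_ix_i^2$; then $K$ is $k$-rational iff $L$ is, and one iterates. The case $n=1$ is degenerate: since $K$ is a field, $a_0+a_1x_1^2$ must be irreducible, so $K/k$ is a nontrivial algebraic extension and all four conditions fail. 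Hence from now on $n\ge 2$ and the form $\langle a_0,\dots,a_n\rangle$ is non-degenerate, so $Q:a_0x_0^2+\cdots+a_nx_n^2=0$ is a smooth (hence irreducible) quadric in $\mathbb{P}^n$ of dimension $\ge 1$, the affine quadric $\{q=0\}\subset\mathbb{A}^n$ is the dense open locus $\{x_0\ne 0\}$ of $Q$, and therefore $K\cong k(Q)$; in particular $(iii)$ is equivalent to the $k$-rationality of $Q$.

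For $(i)\Leftrightarrow(ii)$: a $k$-zero $(b_1,\dots,b_n)$ of $(ii)$ produces the $k$-point $(1:b_1:\dots:b_n)$ of $Q$, giving $(ii)\Rightarrow(i)$. Conversely, take a nonzero $k$-zero $(c_0:\dots:c_n)$ of $(i)$: if $c_0\ne 0$, rescaling yields a $k$-zero of $(ii)$; if $c_0=0$, then the non-degenerate form $\langle a_1,\dots,a_n\rangle$ is isotropic, hence universal (the standard fact that a non-degenerate isotropic quadratic form over a field of characteristic $\ne 2$ represents every scalar), so it represents $-a_0$, again giving $(ii)$. For $(i)\Rightarrow(iii)$: given a point $P\in Q(k)$, stereographic projection from $P$ --- sending $X\in Q\setminus\{P\}$ to the line $\overline{PX}$, with inverse assigning to a line through $P$ its residual point of intersection with $Q$ --- is a $k$-birational map $Q\dashrightarrow\mathbb{P}^{n-1}$, because $Q$ is smooth (so $P$ is a smooth point) and $\dim Q\ge 1$; hence $K\cong k(Q)$ is $k$-rational.

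For $(iv)\Rightarrow(i)$: if $K$ is $k$-unirational, there is a dominant rational $k$-map $f\colon\mathbb{A}^m\dashrightarrow Q$. When $k$ is infinite, the $k$-points of $\mathbb{A}^m$ are Zariski dense, so some of them lies in the domain of definition of $f$ and is carried to a $k$-point of $Q$, which is a nontrivial $k$-zero of the form in $(i)$. When $k$ is finite, $a_0x_0^2+\cdots+a_nx_n^2$ is a degree-$2$ form in $n+1\ge 3$ variables, hence has a nontrivial zero by the Chevalley--Warning theorem, so $(i)$ holds outright. (Alternatively, $(iv)\Rightarrow(i)$ follows in one stroke from the Lang--Nishimura theorem, since $Q$ is proper and $\mathbb{A}^m$ has a smooth $k$-point.) This closes the cycle and, with $(i)\Leftrightarrow(ii)$, finishes the proof.

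The step I expect to require the most care is $(iv)\Rightarrow(i)$, the passage from mere unirationality of the function field to an actual $k$-rational point on $Q$: it is exactly here that one must split into the infinite-field case (density of rational points, or Lang--Nishimura) and the finite-field case (Chevalley--Warning). The remaining ingredients --- the birationality over $k$ of stereographic projection from a smooth point, and the universality of isotropic quadratic forms in characteristic $\ne 2$ --- are classical, and the only genuinely fiddly point is the bookkeeping that reduces the statement to a non-degenerate quadratic form in at least three variables.
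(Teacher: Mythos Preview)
The paper does not give its own proof of this statement: Theorem~\ref{thOhm} is stated in the preliminaries with a citation to Ohm \cite{Ohm94} and Hajja--Kang--Ohm \cite{HKO94}, and is used later as a black box. So there is no ``paper's approach'' to compare against; your write-up is a self-contained proof of a cited result.

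Your argument is the standard one and is correct. The cycle $(iii)\Rightarrow(iv)\Rightarrow(i)\Rightarrow(iii)$ via stereographic projection from a smooth $k$-point and Lang--Nishimura/Chevalley--Warning, together with the isotropic-implies-universal step for $(i)\Leftrightarrow(ii)$, is exactly how the references prove it. One small remark: your reduction handles $a_i=0$ for $i\ge 1$ but you silently assume $a_0\ne 0$ thereafter. That is the intended hypothesis (otherwise the affine ``quadric'' is a cone and the equivalence can fail, e.g.\ $a_1x_1^2+a_2x_2^2$ with $-a_2/a_1\notin k^{\times 2}$ has $(i)$ trivially true via $(1,0,0)$ while $K\cong k(\sqrt{-a_2/a_1})(t)$ is not $k$-rational), but it would be cleaner to say so explicitly at the reduction step.
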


\begin{lemma}\label{lemXY}
Let $k$ be a field with {\rm char} $k\neq 2$, 
$K=k(\sqrt{d})$ be a quadratic extension of $k$ 
and $\sigma$ act on $K(x_1,\ldots,x_n)$ by $k$-automorphisms 
\begin{align*}
\sigma : \sqrt{d}\mapsto -\sqrt{d},\ 
x_i\mapsto x_i\ (1\leq i\leq n-1),\ 
x_n\mapsto \frac{f(x_1,\ldots,x_{n-1})}{x_n}
\end{align*}
where $f(x_1,\ldots,x_{n-1})$ is a polynomial in $k[x_1,\ldots,x_{n-1}]$. 
Then we have $K(x_1,\ldots,x_n)^{\langle\sigma\rangle}=k(X,Y,x_1,\ldots,x_{n-1})$ 
where $X^2-dY^2=f(x_1,\ldots,x_{n-1})$. 
\end{lemma}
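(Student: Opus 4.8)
The plan is to produce the generators $X,Y$ explicitly and then finish with a short degree count. Write $L=k(x_1,\ldots,x_{n-1})$; it is fixed pointwise by $\sigma$, so $K(x_1,\ldots,x_n)=L(\sqrt d,x_n)$ with $\sigma$ acting by $\sqrt d\mapsto-\sqrt d$ and $x_n\mapsto f/x_n$, where now $f\in L^\times$. (Here $f\neq 0$ because $\sigma$ is an automorphism, and $\sigma^2={\rm id}$ because $\sigma(f)=f$; also $\sqrt d\notin L$, since $k$ is algebraically closed in the purely transcendental extension $L$ while $\sqrt d\notin k$.) Set
\[
X=\tfrac12\Bigl(x_n+\tfrac f{x_n}\Bigr),\qquad Y=\tfrac1{2\sqrt d}\Bigl(x_n-\tfrac f{x_n}\Bigr),
\]
so that a direct check gives $\sigma(X)=X$, $\sigma(Y)=Y$, $X^2-dY^2=f$, and $x_n=X+\sqrt d\,Y$ (equivalently, $x_n$ has trace $2X$ and norm $f$ over the would-be fixed field). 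Hence $L(X,Y)\subseteq K(x_1,\ldots,x_n)^{\langle\sigma\rangle}$ and $K(x_1,\ldots,x_n)=L(\sqrt d,X,Y)=L(X,Y)(\sqrt d)$, so $[K(x_1,\ldots,x_n):L(X,Y)]\le 2$.

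Since $\sigma$ has order $2$ (as ${\rm char}\,k\neq 2$), Artin's theorem gives $[K(x_1,\ldots,x_n):K(x_1,\ldots,x_n)^{\langle\sigma\rangle}]=2$; in the tower $L(X,Y)\subseteq K(x_1,\ldots,x_n)^{\langle\sigma\rangle}\subseteq K(x_1,\ldots,x_n)$ it therefore suffices to prove $[K(x_1,\ldots,x_n):L(X,Y)]=2$, i.e.\ $\sqrt d\notin L(X,Y)$, to force $L(X,Y)=K(x_1,\ldots,x_n)^{\langle\sigma\rangle}=k(X,Y,x_1,\ldots,x_{n-1})$. This irrationality statement is the one nontrivial point, and I expect it to be the main obstacle. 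Note that $X$ is transcendental over $L$ (it is a nonconstant rational function of $x_n$) and $L(X,Y)=L(X)\bigl[\sqrt{(X^2-f)/d}\,\bigr]$ with $Y^2=(X^2-f)/d\in L(X)$. If $(X^2-f)/d$ is a square in $L(X)$, then $L(X,Y)=L(X)$, and $\sqrt d\notin L(X)$ since $L$ is algebraically closed in $L(X)$ and $\sqrt d\notin L$. Otherwise $L(X,Y)/L(X)$ is quadratic; were $\sqrt d$ to lie in $L(X,Y)$, then $L(X)(\sqrt d)=L(X,Y)$, so $d$ and $(X^2-f)/d$ would determine the same square class in $L(X)^\times/(L(X)^\times)^2$, whence $X^2-f$ would be a square in $L(X)$. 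But $X^2-f$ is a squarefree polynomial of degree $2$ in $L[X]$ (its discriminant $4f$ is nonzero), hence not a square in $L(X)$ — a contradiction. In either case $\sqrt d\notin L(X,Y)$, which completes the argument.

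No heavier input is required: Theorem \ref{thEM} and Theorem \ref{thOhm} apply to linear actions and to quadrics respectively, whereas here the essential feature is simply that $x_n$ satisfies $T^2-2XT+f=0$ over the fixed field, so that $x_n=X\pm\sqrt d\,Y$ and $\sqrt d$ remains inert over the ``conic'' field $L(X,Y)$.
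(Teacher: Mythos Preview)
Your proof is correct and follows the same approach as the paper: define $X=\tfrac12(x_n+f/x_n)$ and $Y=\tfrac1{2\sqrt d}(x_n-f/x_n)$, observe $L(X,Y)\subseteq K(x_1,\ldots,x_n)^{\langle\sigma\rangle}$, and use that $[K(x_1,\ldots,x_n):L(X,Y)]\le 2$.

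One remark: your second paragraph, where you carefully prove $\sqrt d\notin L(X,Y)$ via square classes in $L(X)$, is unnecessary. You already have the chain $L(X,Y)\subseteq K(x_1,\ldots,x_n)^{\langle\sigma\rangle}\subsetneq K(x_1,\ldots,x_n)$ with $[K(x_1,\ldots,x_n):K(x_1,\ldots,x_n)^{\langle\sigma\rangle}]=2$ by Artin and $[K(x_1,\ldots,x_n):L(X,Y)]\le 2$; these alone force $L(X,Y)=K(x_1,\ldots,x_n)^{\langle\sigma\rangle}$. The paper's proof is exactly this two-line degree squeeze and stops there.
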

\begin{proof}
Define 
\begin{align*}
X=\frac{1}{2}\left(x_n+\frac{f(x_1,\ldots,x_{n-1})}{x_n}\right),
Y=\frac{1}{2\sqrt{d}}\left(x_n-\frac{f(x_1,\ldots,x_{n-1})}{x_n}\right).
\end{align*}
Then the assertion follows from 
$K(x_1,\ldots,x_n)^{\langle\sigma\rangle}\supset k(X,Y,x_1,\ldots,x_{n-1})$ 
and $[K(x_1,\ldots,x_n) : k(X,Y,x_1,\ldots,x_{n-1})]\leq 2$.
\end{proof}

For the rationality of quadrics and conic bundles, see 
\cite[Section 4]{Kan07}, \cite{Yam}. 

%%%%%%%%%%%%%%%%%%%%%%%%%%%%%%%%%%%%%%%%%%%%%%%%%%%%%%%%%%%%%%%%%%%
%
\subsection{Conjugacy classes move}\label{subsec:conversion} 
%
%%%%%%%%%%%%%%%%%%%%%%%%%%%%%%%%%%%%%%%%%%%%%%%%%%%%%%%%%%%%%%%%%%%
~\\

For the rationality problem of $K(x_1,\ldots,x_n)^G$ under 
purely quasi-monomial actions, 
we may convert some cases into their $GL_n(\bQ)$-conjugation 
(see also \cite[Section 13.2]{HKY11}) 
although the problem is determined up to $GL_n(\bZ)$-conjugation 
(a conjugate of $G$ corresponds to just some base change). 
When the action of $G$ on $K$ is faithful, 
i.e. algebraic $k$-tori case, 
$GL_n(\bQ)$-conjugation corresponds to $k$-isogeny (see Ono \cite[Section 1.3]{Ono61}). 

\begin{theorem}[Kitayama {\cite[Theorem 2.4]{Kit11}}]\label{lem:conversion0} 
Let $k$ be a field with {\rm char} $k \neq 2$. 
Let $G$ and $G'$ be $GL_n(\bQ)$-conjugate subgroups of $GL_n(\bZ)$ 
which act on $K(x_1, \ldots, x_n)$ by purely quasi-monomial 
$k$-automorphisms, 
$P=[p_{i,j}]\in GL_n(\bQ)$ be an integer matrix with 
$G'=P^{-1}GP$ and 
$H'=\{\sigma\in G'\mid \sigma(\alpha)=\alpha\ {\rm for\ any}\ \alpha\in K\}$. 
For $y_j=\prod _{i=1}^n x_i^{p_{i,j}}$ $(j=1, \ldots, n)$, 
the induced purely quasi-monomial action of $G$ 
on $K(y_1, \ldots, y_n)$ is the same action of $G'$
on $K(x_1, \ldots, x_n)$ and $H=\{\sigma\in G\mid \sigma(\alpha)=\alpha\ {\rm for\ any}\ \alpha\in K\}$ where $H=PH'P^{-1}$.
\end{theorem}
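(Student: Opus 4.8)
The plan is a direct computation. I would (a) exhibit the induced $G$-action on the Laurent-monomial subfield $K(y_1,\ldots,y_n)$ of $K(x_1,\ldots,x_n)$, (b) recognize its representation matrix as the $P$-conjugate of $\rho_{\underline{x}}$, and (c) identify $K(y_1,\ldots,y_n)$ with $K(x_1,\ldots,x_n)$ as a $G$-field. Throughout I use the convention implicit in the statement (and in Subsection \ref{subsec:conversion}) that the conjugate group $G'=P^{-1}GP$ acts on $K$ via the conjugation isomorphism, i.e.\ $P^{-1}\sigma P$ acts on $K$ exactly as $\sigma$ does.

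Since $\det P\neq 0$, the sublattice $P\bZ^n$ of $\bZ^n$ has finite index, so $y_1,\ldots,y_n$ are algebraically independent over $K$ and $K(y_1,\ldots,y_n)$ is a rational function field of $n$ variables over $K$; the condition $K^G=k$ carries over verbatim, as it concerns only the action on $K$. For $\sigma\in G$ with $A=\rho_{\underline{x}}(\sigma)=[a_{il}]\in GL_n(\bZ)$, i.e.\ $\sigma(x_l)=\prod_i x_i^{a_{il}}$, I would then compute
\[
\sigma(y_j)=\prod_l\sigma(x_l)^{p_{lj}}=\prod_i x_i^{(AP)_{ij}}=\prod_m y_m^{(P^{-1}AP)_{mj}},
\]
the last step using $\prod_m y_m^{b_{mj}}=\prod_i x_i^{(PB)_{ij}}$ with $PB=AP$, i.e.\ $B=P^{-1}AP$. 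Hence $K(y_1,\ldots,y_n)$ is $G$-stable, $\sigma$ acts on it purely quasi-monomially with $\rho_{\underline{y}}(\sigma)=P^{-1}\rho_{\underline{x}}(\sigma)P$, and $\sigma\mapsto\rho_{\underline{y}}(\sigma)$ is precisely the conjugation isomorphism $G\xrightarrow{\sim}G'$.

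Finally, as $y_1,\ldots,y_n$ are algebraically independent over $K$ there is a unique $K$-algebra isomorphism $\Phi\colon K(y_1,\ldots,y_n)\xrightarrow{\sim}K(x_1,\ldots,x_n)$ with $\Phi(y_j)=x_j$; identifying $G$ with $G'$ through $\sigma\leftrightarrow P^{-1}\sigma P$, the displayed formula gives $\Phi(\sigma(y_j))=(P^{-1}\sigma P)(x_j)$, while $\Phi$ is the identity on $K$, so $\Phi$ is $G$-equivariant and $K(y_1,\ldots,y_n)^G\cong K(x_1,\ldots,x_n)^{G'}$ over $k$. Since $P^{-1}\sigma P$ fixes $K$ pointwise iff $\sigma$ does, $H'=P^{-1}HP$. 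The only point needing care is integrality: $P$ is merely an integer matrix with $\det P\neq 0$, not unimodular, so a priori $P^{-1}AP$ need not have integer entries — it is exactly the hypothesis $G'=P^{-1}GP\subset GL_n(\bZ)$ that forces it to, and hence makes the induced action genuinely purely quasi-monomial. (When $P\in GL_n(\bZ)$ one has $K(y_1,\ldots,y_n)=K(x_1,\ldots,x_n)$ and this is a literal change of variables; in general $[K(x_1,\ldots,x_n):K(y_1,\ldots,y_n)]=|\det P|$.)
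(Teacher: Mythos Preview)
The paper does not supply its own proof of this statement; it is quoted from \cite[Theorem 2.4]{Kit11} and stated without argument. Your direct computation is correct and is essentially the standard one: the key identity $\rho_{\underline{y}}(\sigma)=P^{-1}\rho_{\underline{x}}(\sigma)P$ follows immediately from the monomial calculation you wrote, the $K$-isomorphism $\Phi$ with $\Phi(y_j)=x_j$ then transports the $G$-action on $K(y_1,\ldots,y_n)$ to the $G'$-action on $K(x_1,\ldots,x_n)$, and your observation that integrality of $P^{-1}AP$ is exactly the hypothesis $G'=P^{-1}GP\subset GL_n(\bZ)$ pinpoints the one genuinely needed assumption.
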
  
We will apply Theorem \ref{lem:conversion0} as follows. 

\begin{lemma}\label{lem:conversion} 
Let $P_1$, $P_2\in GL_3(\bZ)$ be two matrices defined by 
\begin{align}
P_1=\begin{bmatrix} 1&0&0\\0&1&0\\1&1&2 \end{bmatrix},\ 
P_2=\begin{bmatrix} 1&0&0\\1&2&0\\1&0&2 \end{bmatrix}.\label{matP1P2}
\end{align}
Let $\rho_1$, $\rho_2$ and $\rho_3$ be $K$-automorphisms on $K(x_1,x_2,x_3)$ 
defined by 
\begin{align*} 
\rho_1 &: x_1\mapsto -x_1,\ x_2\mapsto -x_2,\ x_3\mapsto -x_3,\  
\rho_2 : x_1\mapsto -x_1,\ x_2\mapsto x_2,\ x_3\mapsto -x_3,\\ 
\rho_3 &: x_1\mapsto x_1,\ x_2\mapsto -x_2,\ x_3\mapsto -x_3.  
\end{align*} 
Then we have:\\
{\rm (i)} $K(x_1,x_2,x_3)^{P_1^{-1}GP_1}$ is $k$-rational 
if and only if $\big(K(x_1,x_2,x_3)^G\big)^{\langle \rho_1\rangle}$ is $k$-rational;\\
{\rm (ii)} $K(x_1,x_2,x_3)^{P_2^{-1}GP_2}$ is $k$-rational 
if and only if $\big(K(x_1,x_2,x_3)^G\big)^{\langle \rho_2, \rho_3\rangle}$ is $k$-rational. 
\end{lemma}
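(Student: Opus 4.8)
The plan is to deduce both parts from Kitayama's conjugacy-move result, Theorem~\ref{lem:conversion0}, applied to $P=P_1$ and $P=P_2$, by reading the columns of $P_i$ as exponent vectors of auxiliary monomials. Put $y_j=\prod_{i=1}^3 x_i^{p_{i,j}}$ for $P_i=[p_{i,j}]$, so that $y_1=x_1x_3$, $y_2=x_2x_3$, $y_3=x_3^2$ when $P=P_1$, and $y_1=x_1x_2x_3$, $y_2=x_2^2$, $y_3=x_3^2$ when $P=P_2$. Since $P_i^{-1}GP_i\subseteq GL_3(\bZ)$, the monomial sublattice spanned by the columns of $P_i$ is $G$-invariant, so $K(y_1,y_2,y_3)$ is a $G$-stable subfield of $K(x_1,x_2,x_3)$; and Theorem~\ref{lem:conversion0} says that the induced $G$-action on $K(y_1,y_2,y_3)$ is the same as the $(P_i^{-1}GP_i)$-action on $K(x_1,x_2,x_3)$, whence $K(x_1,x_2,x_3)^{P_i^{-1}GP_i}=K(y_1,y_2,y_3)^G$.

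Next I would identify $K(y_1,y_2,y_3)$ inside $K(x_1,x_2,x_3)$ as a fixed field under the sign automorphisms. For $P_1$: from $x_3^2=y_3$, $x_1=y_1/x_3$, $x_2=y_2/x_3$ one sees that $K(x_1,x_2,x_3)=K(y_1,y_2,y_3)(x_3)$ is an extension of degree $\det P_1=2$, which is Galois since ${\rm char}\ k\neq 2$; its nontrivial automorphism sends $x_3\mapsto -x_3$, hence $x_i\mapsto -x_i$ for all $i$, i.e.\ it is $\rho_1$, so $K(y_1,y_2,y_3)=K(x_1,x_2,x_3)^{\langle\rho_1\rangle}$. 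For $P_2$: similarly $K(x_1,x_2,x_3)=K(y_1,y_2,y_3)(x_2,x_3)$ with $x_2^2=y_2$, $x_3^2=y_3$, $x_1=y_1/(x_2x_3)$, an extension of degree $\det P_2=4$ which is Galois with group $\cC_2\times\cC_2$; running through the four automorphisms $x_2\mapsto\pm x_2$, $x_3\mapsto\pm x_3$ (the value on $x_1$ being forced by $x_1x_2x_3=y_1$) shows that the Galois group is precisely $\langle\rho_2,\rho_3\rangle$, so $K(y_1,y_2,y_3)=K(x_1,x_2,x_3)^{\langle\rho_2,\rho_3\rangle}$.

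Finally I would interchange the two groups. Since $G$ stabilises $K(y_1,y_2,y_3)$ it normalises ${\rm Gal}(K(x_1,x_2,x_3)/K(y_1,y_2,y_3))$, which is $\langle\rho_1\rangle$ for $P_1$ and $\langle\rho_2,\rho_3\rangle$ for $P_2$; in the $P_1$ case $\langle\rho_1\rangle$ has order $2$, so $G$ even centralises $\rho_1$, and $\rho_1\notin G$ because a purely quasi-monomial automorphism has all its constants equal to $1$ whereas $\rho_1$ has constant $-1\neq 1$. Thus the group $\widetilde{G}$ generated by $G$ together with $\rho_1$ (resp.\ $\rho_2,\rho_3$) has ${\rm Gal}(K(x_1,x_2,x_3)/K(y_1,y_2,y_3))$ as a normal subgroup, and $K(x_1,x_2,x_3)^{\widetilde{G}}$ can be computed in either order, giving $K(y_1,y_2,y_3)^G=\bigl(K(x_1,x_2,x_3)^{\langle\rho_1\rangle}\bigr)^G=\bigl(K(x_1,x_2,x_3)^G\bigr)^{\langle\rho_1\rangle}$, and similarly with $\langle\rho_2,\rho_3\rangle$. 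Together with the first paragraph this gives the two claimed equalities of fields, and in particular the equivalences of $k$-rationality.

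The only step that needs genuine care is the second paragraph: computing the degree $[K(x_1,x_2,x_3):K(y_1,y_2,y_3)]$ correctly and, above all, checking that the Galois group of this extension is generated by \emph{exactly} the prescribed sign automorphisms $\rho_i$, and not by some other order-$2$ or Klein-four subgroup of the group of all sign changes on $x_1,x_2,x_3$. The rest is routine, the one delicate point in the last paragraph being that one may swap ``take $G$-invariants'' with ``take $\langle\rho_i\rangle$-invariants''; this is legitimate here precisely because $G$ normalises the Galois group $\langle\rho_i\rangle$, which is also what makes the right-hand sides of the lemma meaningful.
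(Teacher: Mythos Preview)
Your proof is correct and follows the same approach as the paper: apply Theorem~\ref{lem:conversion0} with $P=P_1,P_2$ to identify the $P_i^{-1}GP_i$-action with the induced $G$-action on $K(y_1,y_2,y_3)$, where the $y_j$ are the monomials read off the columns of $P_i$, and then recognise $K(y_1,y_2,y_3)$ as $K(x_1,x_2,x_3)^{\langle\rho_1\rangle}$ (resp.\ $K(x_1,x_2,x_3)^{\langle\rho_2,\rho_3\rangle}$). The paper's proof records exactly these two identifications and stops; you have simply supplied the supporting details (the degree and Galois-group computations, and the justification for swapping the order of taking invariants) that the paper leaves implicit.
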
 

\begin{proof}
By Theorem \ref{lem:conversion0}, we have 
\begin{align*} 
&K(x_1,x_2,x_3)^{\langle \rho_1 \rangle}=K(y_1,y_2,y_3), & 
&y_1=x_1x_3, \ y_2=x_2x_3, \ y_3=x_3^2, \\ 
&K(x_1,x_2,x_3)^{\langle \rho_2, \rho_3 \rangle}=K(z_1,z_2,z_3), & 
&z_1=x_1x_2x_3, \ z_2=x_2^2, \ z_3=x_3^2  
\end{align*} 
and 
the action of 
$G$ on $K(y_1,y_2,y_3)$ (resp. $K(z_1,z_2,z_3)$)
is the same as 
the action of $P_1^{-1}GP_1$ (resp. $P_2^{-1}GP_2$) 
on $K(x_1,x_2,x_3)$.  
\end{proof} 

Indeed, we will take a finite subgroup $G$ of $GL_3(\bZ)$ 
such that $G$ is $\bZ$-reducible and 
$P_1^{-1}GP_1$ $($resp. $P_2^{-1}GP_2) \leq GL_3(\bZ)$ is $\bZ$-irreducible. 
Namely, the $k$-rationality of 
$K(x_1,x_2,x_3)^{P_1^{-1}GP_1}$ 
(resp. $K(x_1,x_2,x_3)^{P_2^{-1}GP_2}$) 
for $\bZ$-irreducible $P_1^{-1}GP_1$ (resp. $P_2^{-1}GP_2$) 
can be reduced to the $k$-rationality of 
$\big(K(x_1,x_2,x_3)^G\big)^{\langle \rho_1\rangle}$ 
(resp. $\big(K(x_1,x_2,x_3)^G\big)^{\langle \rho_2, \rho_3\rangle}$) 
for $\bZ$-reducible $G$ 
(see Subsections \ref{subsec:3} and \ref{subsec:4}). 
We will also use this technique for the $7$th crystal system (I), (II) 
although the both of $G$ and $P_1^{-1}GP_1$ are $\bZ$-irreducible (see Subsection \ref{subsec:71}). 

%%%%%%%%%%%%%%%%%%%%%%%%%%%%%%%%%%%%%%%%%%%%%%%%%%%%%%%%%%%%%%%%%%%
%%%%%%%%%%%%%%%%%%%%%%%%%%%%%%%%%%%%%%%%%%%%%%%%%%%%%%%%%%%%%%%%%%%
%
\section{Proof of Theorem \ref{thmain1}}\label{seP1}
%
%%%%%%%%%%%%%%%%%%%%%%%%%%%%%%%%%%%%%%%%%%%%%%%%%%%%%%%%%%%%%%%%%%%
%%%%%%%%%%%%%%%%%%%%%%%%%%%%%%%%%%%%%%%%%%%%%%%%%%%%%%%%%%%%%%%%%%%

In this section, we will give a proof of Theorem \ref{thmain1}.
First, we obtain the following theorem 
by Theorem \ref{thm:HKK2} and Corollary \ref{cor:AHK}. 
\begin{theorem}\label{th41}
If $G$ is a group which belongs to {\rm the 1st crystal system}, 
{\rm the 2nd crystal system}, {\rm the 3rd crystal system (I)}, 
{\rm the 5th crystal system (II)}, 
or {\rm the 6th crystal system}, then $K(x_1,x_2,x_3)^G$ 
under purely quasi-monomial actions of $G$ is $k$-rational. 
\end{theorem}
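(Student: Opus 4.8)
The plan is to peel off the variable $x_3$ and reduce everything to the two–dimensional rationality theorem, Theorem \ref{thm:HKK2}, via the reduction in Corollary \ref{cor:AHK}.

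First I would observe that each group $G$ occurring in the five families in question is $\bZ$-reducible of $(2,1)$-type in one and the same way. Inspecting the generators listed in Section \ref{sec:notation}, the matrices $\ca$, $\alpha$, $-\alpha$, $\be_1$, $-\be_1$ permute $x_1,x_2$ (modulo the relation built into $\ca$) and send $x_3$ to $x_3^{\pm1}$, while $\ta_1$, $\la_1$ and $-I_3$, and hence all of their products, are diagonal; therefore every such $G$ is block-diagonal for the splitting $M=M_1\oplus M_2$ with $M_1=\bZ x_1\oplus\bZ x_2$, $M_2=\bZ x_3$, the action on $M_2$ being trivial or by $-1$. Consequently, by Corollary \ref{cor:AHK} it suffices to prove that $K(x_1,x_2)^G$ is $k$-rational, where $G$ acts on $K(x_1,x_2)$ by the induced purely quasi-monomial $k$-automorphisms (the restriction to the $M_1$-block, with the same $G$-action on $K$, so that conditions (i)--(iii) of Definition \ref{def} persist and $K^G=k$ is unchanged). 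This reduction already absorbs the extreme cases $H=1$ (tori) and $H=G$ (monomial) into the same argument.

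Next I would apply Theorem \ref{thm:HKK2} to $K(x_1,x_2)^G$. Writing $N=\{\sigma\in G:\sigma(x_1)=x_1,\ \sigma(x_2)=x_2\}$, that theorem gives unconditional $k$-rationality unless ${\rm char}\ k\neq 2$ and $(G/N,HN/N)\simeq(\cC_4,\cC_2)$ or $(\cD_4,\cC_2)$. To exclude this, I would note that $G/N$ is a quotient of $G$, and that every group appearing in the 1st, 2nd, 3rd (I), 5th (II) and 6th crystal systems is one of $\cC_1$, $\cC_2$, $\cC_2\times\cC_2$, $\cC_2\times\cC_2\times\cC_2$, $\cC_3$, $\cC_6$, $\cS_3$, $\cD_6$, $\cC_6\times\cC_2$, $\cD_6\times\cC_2$, all of exponent dividing $6$; in particular $G$ has no element of order $4$. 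Since $GL_3(\bZ)$ has no element of order $8$ or $12$, any preimage in $G$ of an order-$4$ element of $G/N$ would itself have order $4$, a contradiction; hence $G/N$ has no element of order $4$ and is in particular neither $\cC_4$ nor $\cD_4$. Therefore $K(x_1,x_2)^G$ is $k$-rational, and then so is $K(x_1,x_2,x_3)^G$ by the preceding reduction.

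I do not expect a genuine obstacle here: once the common $(2,1)$-block structure is spotted, the proof is a short verification against the classification of Section \ref{sec:notation}. The only subtlety is that the exceptional cases of Theorem \ref{thm:HKK2} are phrased in terms of the quotient $G/N$ rather than $G$ itself, which is why the ``no element of order $4$'' remark — rather than the weaker ``$G$ is not $\cC_4$ or $\cD_4$'' — is the statement to invoke. The families that genuinely require work, namely $\cC_4$ and $\cD_4$ (the 4th crystal system) and the $\cA_4$, $\cS_4$ families of the 7th crystal system, are left to the later sections.
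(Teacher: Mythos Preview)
Your proof is correct and follows exactly the approach the paper indicates: reduce via Corollary \ref{cor:AHK} using the common $(2,1)$-block structure, then apply Theorem \ref{thm:HKK2} and rule out its exceptional cases by the absence of order-$4$ elements. The paper states this in one line (``by Theorem \ref{thm:HKK2} and Corollary \ref{cor:AHK}''); you have simply written out the verification that the exceptional $(\cC_4,\cC_2)$ and $(\cD_4,\cC_2)$ configurations cannot arise, which the paper leaves implicit.
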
 

Let $H=\{\sigma\in G \mid \sigma(\alpha)=\alpha$ for any $\alpha\in K\}$. 
The remaining cases will be treated below. 

%%%%%%%%%%%%%%%%%%%%%%%%%%%%%%%%%%%%%%%%%%%%%%%%%%%%%%%%%%%
%
\subsection{The 3rd crystal system (II)} \label{subsec:3}
%
%%%%%%%%%%%%%%%%%%%%%%%%%%%%%%%%%%%%%%%%%%%%%%%%%%%%%%%%%%%%

We treat the following six groups: 
\begin{align*}
G_{3,1,3}&=\langle \tau_2,\lambda_2\rangle\simeq\cC_2\times \cC_2,& 
G_{3,1,4}&=\langle \tau_3,\lambda_3\rangle\simeq\cC_2\times \cC_2,\\
G_{3,2,4}&=\langle \tau_2,-\lambda_2\rangle\simeq\cC_2\times \cC_2,& 
G_{3,2,5}&=\langle \tau_3,-\lambda_3\rangle\simeq\cC_2\times \cC_2,\\ 
G_{3,3,3}&=\langle \tau_2,\lambda_2,-I_3\rangle\simeq\cC_2\times 
\cC_2\times \cC_2,& 
G_{3,3,4}&=\langle \tau_3,\lambda_3,-I_3\rangle\simeq\cC_2\times 
\cC_2\times \cC_2.
\end{align*} 
Let $P_1, P_2\in GL_3(\bZ)$ be two matrices defined as in Equation (\ref{matP1P2}). 
Then we may confirm that 
\begin{align*} 
G_{3,1,3} &\sim P_1^{-1}G_{3,1,1}P_1, & 
G_{3,1,4} &\sim P_2^{-1}G_{3,1,1}P_2, \\ 
G_{3,2,4} &\sim P_1^{-1}G_{3,2,1}P_1, & 
G_{3,2,5} &\sim P_2^{-1}G_{3,2,1}P_2, \\ 
G_{3,3,3} &\sim P_1^{-1}G_{3,3,1}P_1, & 
G_{3,3,4} &\sim P_2^{-1}G_{3,3,1}P_2
\end{align*} 
where $\sim$ means $GL_3(\bZ)$-conjugation. 
Let $\rho_1$, $\rho_2$, $\rho_3$ be $K$-automorphisms on 
$K(x_1,x_2,x_3)$ defined by 
\begin{align*} 
\rho_1 &: x_1\mapsto -x_1,\ x_2\mapsto -x_2,\ x_3\mapsto -x_3,\  
\rho_2 : x_1\mapsto -x_1,\ x_2\mapsto x_2,\ x_3\mapsto -x_3,\\ 
\rho_3 &: x_1\mapsto x_1,\ x_2\mapsto -x_2,\ x_3\mapsto -x_3
\end{align*} 
as in Lemma \ref{lem:conversion}. 
By Lemma \ref{lem:conversion}, 
it suffices to consider the $k$-rationality of 
$\big(K(x_1,x_2,x_3)^G\big)^{\langle \rho_1 \rangle}$ and 
$\big(K(x_1,x_2,x_3)^G\big)^{\langle \rho_2, \rho_3 \rangle}$
for $G=G_{3,j,1}$ $(j=1,2,3)$. 
Note that we already know that $K(x_1,x_2,x_3)^G$ is $k$-rational 
by Theorem \ref{th41}. 

The actions of $\tau_1$, $\lambda_1$ and $-I_3$ on $K(x_1,x_2,x_3)$
are given by 
\begin{align*} 
\tau_1 &: x_1\mapsto \frac{1}{x_1},\ x_2\mapsto \frac{1}{x_2},\ x_3\mapsto x_3, & 
\lambda_1 &: x_1\mapsto \frac{1}{x_1},\ x_2\mapsto x_2,\ x_3\mapsto \frac{1}{x_3}, \\ 
-I_3 &: x_1\mapsto \frac{1}{x_1},\ x_2\mapsto \frac{1}{x_2},\ x_3\mapsto \frac{1}{x_3}. 
\end{align*}\\

Case 1:  $G=G_{3,1,1}=\langle \tau_1, \lambda_1 \rangle\simeq \cC_2\times \cC_2$.\\

Let $H$ be a non-trivial proper normal subgroup of $G$. 
By symmetry, we have only to consider $H=\langle \tau_1 \rangle$. 
We can take $\sqrt{a}\in K$ such that 
$K=k(\sqrt{a})$ and $\lambda_1(\sqrt{a})=-\sqrt{a}$. 
Then we have $K(x_1,x_2,x_3)^G=k(y_1,y_2,y_3)$ where 
\[
y_1=\left(\frac{x_1-1}{x_1+1}\right)^2,\
y_2=\frac{\sqrt{a}(x_1-1)(x_2-1)}{(x_1+1)(x_2+1)},\ 
y_3=\frac{\sqrt{a}(x_3-1)}{x_3+1}
\]  
and $\rho_1$, $\rho_2$ and $\rho_3$ act on $k(y_1,y_2,y_3)$ by 
\begin{align*} 
\rho_1 &: y_1\mapsto \frac{1}{y_1},\ y_2\mapsto \frac{a}{y_2},\ y_3\mapsto \frac{a}{y_3},\\ 
\rho_2 &: y_1\mapsto \frac{1}{y_1},\ y_2\mapsto \frac{y_2}{y_1},\ y_3\mapsto \frac{a}{y_3},\ 
\rho_3 : y_1\mapsto y_1,\ y_2\mapsto \frac{ay_1}{y_2},\ y_3\mapsto \frac{a}{y_3}. 
\end{align*} 
Hence the $k$-rationality of $\big(K(x_1,x_2,x_3)^G\big)^{\langle \rho_1 \rangle}$  
and $\big(K(x_1,x_2,x_3)^G\big)^{\langle \rho_2, \rho_3 \rangle}$ 
can be reduced to the $k$-rationality 
for three-dimensional monomial actions on 
$k(y_1,y_2,y_3)$ of $G_{1,2,1}\simeq \cC_2$ and $G_{3,1,2}\simeq \cC_2\times \cC_2$ 
respectively, which has already been settled in \cite{Sal00} and \cite{HKY11}. 
They are both $k$-rational.\\

Case 2: $G=G_{3,2,1}=\langle \tau_1, -\lambda_1 \rangle\simeq \cC_2\times \cC_2$.\\

Because $\ta_1(x_3)=-\la_1(x_3)=x_3$ and 
$\rho_i(x_3)=-x_3$ $(i=1,2,3)$, 
by Theorem \ref{thAHK}, 
the $k$-rationality of $\big(K(x_1,x_2,x_3)^G\big)^{\langle \rho_1 \rangle}$ and 
$\big(K(x_1,x_2,x_3)^G\big)^{\langle \rho_2, \rho_3 \rangle}$ 
can be reduced to the $k$-rationality of 
purely quasi-monomial actions of $G$ on 
$K(x_1,x_2)^{\langle \rho_1 \rangle}$ and 
$K(x_1,x_2)^{\langle \rho_2, \rho_3 \rangle}$. 
They are $k$-rational 
for any non-trivial proper normal subgroup $H$ of $G$ 
by Theorem \ref{thm:HKK2}.\\

Case 3: 
$G=G_{3,3,1}=\langle \tau_1, \lambda_1, -I_3 \rangle\simeq \cC_2\times \cC_2\times \cC_2$.\\

Let $H$ be a non-trivial proper normal subgroup of $G$. 
By symmetry, we have only to consider the following three cases:\\
 
(i) $\langle-\tau_1\lambda_1\rangle\leq H$. 
We have $K(x_1,x_2,x_3)^{\langle -\tau_1\lambda_1 \rangle}=K(y_1,y_2,y_3)$ 
where $y_1=x_1+\frac{1}{x_1}$, $y_2=x_2$, $y_3=x_3$. 
By Theorem \ref{thAHK}, 
the $k$-rationality of $\big(K(x_1,x_2,x_3)^G\big)^{\langle \rho_1 \rangle}$ and 
$\big(K(x_1,x_2,x_3)^G\big)^{\langle \rho_2, \rho_3 \rangle}$ 
can be reduced to the $k$-rationality for 
purely quasi-monomial actions of $G$ on 
$K(y_2,y_3)^{\langle \rho_1 \rangle}$ and 
$K(y_2,y_3)^{\langle \rho_2, \rho_3 \rangle}$. 
They are both $k$-rational by Theorem \ref{thm:HKK2}.\\

(ii) $\langle\tau_1\rangle\leq H$. 
By Lemma \ref{lem:tau1}, 
we have $K(x_1,x_2,x_3)^{\langle \tau_1 \rangle}=K(y_1,y_2,y_3)$ where 
\[
y_1=\frac{x_1x_2+1}{x_1+x_2},\
y_2=\frac{x_1x_2-1}{x_1-x_2},\ 
y_3=x_3.
\] 
The actions of $\lambda_1, -I_3, \rho_1, \rho_2$ and 
$\rho_3$ on $K(y_1,y_2,y_3)$ are given by 
\begin{align*} 
\lambda_1 &: y_1\mapsto \frac{1}{y_1},\ y_2\mapsto \frac{1}{y_2},\ y_3\mapsto \frac{1}{y_3}, & 
-I_3 &: y_1\mapsto y_1,\ y_2\mapsto y_2,\ y_3\mapsto \frac{1}{y_3}, \\ 
\rho_1 &: y_1\mapsto -y_1,\ y_2\mapsto -y_2,\ y_3\mapsto -y_3, \\ 
\rho_2 &: y_1\mapsto y_2,\ y_2\mapsto y_1,\ y_3\mapsto -y_3, & 
\rho_3 &: y_1\mapsto -y_2,\ y_2\mapsto -y_1,\ y_3\mapsto -y_3. 
\end{align*} 

We have $K(y_1,y_2,y_3)^{\langle\rho_1\rangle}=K(z_1,z_2,z_3)$ 
where $z_1=y_1^2, z_2=\frac{y_1y_2-1}{y_1y_2+1}, z_3=y_1y_3$, 
and 
\begin{align*} 
\lambda_1 &: z_1\mapsto \frac{1}{z_1},\ z_2\mapsto -z_2,\ z_3\mapsto \frac{1}{z_3}, & 
-I_3 &: z_1\mapsto z_1,\ z_2\mapsto z_2,\ z_3\mapsto \frac{z_1}{z_3}. 
\end{align*} 
Hence, by Theorem \ref{thAHK}, 
the $k$-rationality of $\big(K(x_1,x_2,x_3)^G\big)^{\langle \rho_1 \rangle}$ 
can be reduced to the $k$-rationality for 
purely quasi-monomial actions of 
$\langle \lambda_1, -I_3 \rangle$ on $K(z_1,z_3)$. 
It is $k$-rational by Theorem \ref{thm:HKK2}. 

We have $K(y_1,y_2,y_3)^{\langle\rho_2\rho_3\rangle}=K(w_1,w_2,w_3)$ 
where $w_1=\frac{y_1-y_2}{y_1+y_2}, w_2=\frac{y_1y_2-1}{y_1y_2+1}, w_3=y_3$ and 
\begin{align*} 
\lambda_1 &: w_1\mapsto -w_1,\ w_2\mapsto -w_2,\ w_3\mapsto \frac{1}{w_3}, & 
-I_3 &: w_1\mapsto w_1,\ w_2\mapsto w_2,\ w_3\mapsto \frac{1}{w_3}, \\ 
\rho_2=\rho_3 &: w_1\mapsto -w_1,\ w_2\mapsto w_2,\ w_3\mapsto -w_3.  
\end{align*} 
By Theorem \ref{thAHK}, 
the $k$-rationality of $\big(K(x_1,x_2,x_3)^G\big)^{\langle \rho_2, \rho_3  \rangle}$ 
can be reduced to the $k$-rationality for 
quasi-monomial actions of $\langle \lambda_1, -I_3, \rho_2 \rangle$ on $K(w_3)$. 
It is $k$-rational by Theorem \ref{thHKKp113}.\\

(iii)\ $H=\langle -I_3 \rangle\simeq \cC_2$. 
We have $K(x_1,x_2,x_3)^H=K(y_1,y_2,y_3)$ where 
\[
y_1=\left(\frac{x_1-1}{x_1+1}\right)^2,\ 
y_2=\frac{(x_1-1)(x_2-1)}{(x_1+1)(x_2+1)},\ 
y_3=\frac{(x_1-1)(x_3-1)}{(x_1+1)(x_3+1)}
\]
and
\begin{align*} 
\tau_1 &: %\sqrt{a}\mapsto -\sqrt{a}, \sqrt{b} \mapsto \sqrt{b},\ 
y_1\mapsto y_1,\ y_2\mapsto y_2,\ y_3\mapsto -y_3,& 
\lambda_1 &: %\sqrt{a}\mapsto \sqrt{a}, \sqrt{b} \mapsto -\sqrt{b},\ 
y_1\mapsto y_1,\ y_2\mapsto -y_2,\ y_3\mapsto y_3,  \\ 
\rho_1 &: %\sqrt{a} \mapsto \sqrt{a}, \sqrt{b}\mapsto\sqrt{b},\ 
y_1\mapsto \frac{1}{y_1},\ y_2\mapsto \frac{1}{y_2},\ y_3\mapsto \frac{1}{y_3}, \\  
\rho_2 &: %\sqrt{a} \mapsto \sqrt{a}, \sqrt{b}\mapsto\sqrt{b},\ 
y_1\mapsto \frac{1}{y_1},\ y_2\mapsto \frac{y_2}{y_1},\ y_3\mapsto \frac{1}{y_3},& 
\rho_3 &: %\sqrt{a} \mapsto \sqrt{a}, \sqrt{b}\mapsto\sqrt{b},\ 
y_1\mapsto y_1,\ y_2\mapsto \frac{y_1}{y_2},\ y_3\mapsto \frac{y_1}{y_3}.
\end{align*} 
We may take $\sqrt{a}, \sqrt{b} \in K$ such that $K=k(\sqrt{a},\sqrt{b})$ and 
\begin{align*} 
\tau_1 &: \sqrt{a}\mapsto -\sqrt{a},\ \sqrt{b} \mapsto \sqrt{b},\ 
y_1\mapsto y_1,\ y_2\mapsto y_2,\ y_3\mapsto -y_3,  \\  
\lambda_1 &: \sqrt{a}\mapsto \sqrt{a}\, \sqrt{b} \mapsto -\sqrt{b},\ 
y_1\mapsto y_1,\ y_2\mapsto -y_2,\ y_3\mapsto y_3.
\end{align*} 
Then $K(y_1,y_2,y_3)^{\langle \tau_1, \lambda_1 \rangle}=k(z_1,z_2,z_3)$ 
where $z_1=y_1$, $z_2=\sqrt{b} y_2$, $z_3=\sqrt{a} y_3$, and 
\begin{align*} 
\rho_1 &: z_1\mapsto \frac{1}{z_1},\ z_2\mapsto \frac{b}{z_2},\ z_3\mapsto \frac{a}{z_3}, \\ 
\rho_2 &: z_1\mapsto \frac{1}{z_1},\ z_2\mapsto \frac{z_2}{z_1},\ z_3\mapsto \frac{a}{z_3}, & 
\rho_3 &: z_1\mapsto z_1,\ z_2\mapsto \frac{bz_1}{z_2},\ z_3\mapsto \frac{az_1}{z_3}. 
\end{align*} 
Hence the $k$-rationality of 
$\big(K(x_1,x_2,x_3)^G\big)^{\langle \rho_1 \rangle}$ 
and $\big(K(x_1,x_2,x_3)^G\big)^{\langle \rho_2, \rho_3 \rangle}$ 
can be reduced to the $k$-rationality of three-dimensional 
monomial actions of $G_{1,2,1}$ and $G_{3,1,3}$ respectively,  
which has been settled in \cite{Sal00} and \cite{HKY11}. 
They are both $k$-rational. 

%%%%%%%%%%%%%%%%%%%%%%%%%%%%%%%%%%%%%%%%%%%%%%%%%%%%%%%%%%%
%
\subsection{The 4th crystal system (I), (II)} \label{subsec:4}
%
%%%%%%%%%%%%%%%%%%%%%%%%%%%%%%%%%%%%%%%%%%%%%%%%%%%%%%%%%%%%

We treat the following sixteen groups:  
\begin{align*}
G_{4,1,1}&=\langle \caa\rangle\simeq\cC_4,&
G_{4,2,1}&=\langle -\caa\rangle\simeq\cC_4,\\
G_{4,3,1}&=\langle \caa,-I_3\rangle\simeq\cC_4\times \cC_2,& 
G_{4,4,1}&=\langle \caa,\lambda_1\rangle\simeq\cD_4,\\
G_{4,5,1}&=\langle \caa,-\lambda_1\rangle\simeq\cD_4, &   
G_{4,6,1}&=\langle -\caa,\lambda_1\rangle\simeq\cD_4, \\
G_{4,6,2}&=\langle -\caa,-\lambda_1\rangle\simeq\cD_4, &
G_{4,7,1}&=\langle \caa,\lambda_1,-I_3\rangle\simeq\cD_4\times \cC_2
\end{align*}
which are $\bZ$-reducible and belong to the 4th crystal system (I), and 
\begin{align*} 
G_{4,1,2}&=\langle \cbb\rangle\simeq\cC_4,&
G_{4,2,2}&=\langle -\cbb\rangle\simeq\cC_4,\\
G_{4,3,2}&=\langle \cbb,-I_3\rangle\simeq\cC_4\times \cC_2,& 
G_{4,4,2}&=\langle \cbb,\lambda_3\rangle\simeq\cD_4,\\
G_{4,5,2}&=\langle \cbb,-\lambda_3\rangle\simeq\cD_4, &   
G_{4,6,3}&=\langle -\cbb,-\lambda_3\rangle\simeq\cD_4, \\
G_{4,6,4}&=\langle -\cbb,\lambda_3\rangle\simeq\cD_4, &
G_{4,7,2}&=\langle \cbb,\lambda_3,-I_3\rangle\simeq\cD_4\times \cC_2
\end{align*}
which are $\bZ$-irreducible and belong to the 4th crystal system (II).

Let $P_1\in GL_3(\bZ)$ be a matrix 
defined as Equation (\ref{matP1P2}) in Lemma \ref{lem:conversion}. 
We may confirm that 
\[
G_{4,j,2} \sim P_1^{-1}G_{4,j,1}P_1 \hspace{20pt} 
\text{for } 1\leq j\leq 5,\ j=7,
\]  
\[
G_{4,6,3} \sim P_1^{-1}G_{4,6,1}P_1, \hspace{20pt} 
G_{4,6,4} \sim P_1^{-1}G_{4,6,2}P_1
\]
where $\sim$ means $GL_3(\bZ)$-conjugation. 
By Lemma \ref{lem:conversion}, 
it suffices to consider the $k$-rationality of 
$K(x_1,x_2,x_3)^G$ and 
$\big(K(x_1,x_2,x_3)^G\big)^{\langle \rho_1 \rangle}$ where 
\[
\rho_1 : x_1\mapsto -x_1, x_2\mapsto -x_2, x_3\mapsto -x_3
\] 
for $G=G_{4,j,1}$ ($1\leq j\leq 7$) and $G_{4,6,2}$ 
which are $\bZ$-reducible, contain $\caa$ or $-\caa$ 
and belong to the $4$th crystal system (I). 
 
The actions of $\caa$, $\caa^2$, $\lambda_1$, $-\lambda_1$, 
$-I_3$ and $\rho_1$ on $K(x_1, x_2, x_3)$ are given by  
\begin{align*} 
\caa &: x_1\mapsto x_2,\ x_2\mapsto \frac{1}{x_1},\ x_3\mapsto x_3, & 
-\caa &: x_1\mapsto \frac{1}{x_2},\ x_2\mapsto x_1,\ x_3\mapsto \frac{1}{x_3},\\
\caa^2 &: x_1\mapsto \frac{1}{x_1},\ x_2\mapsto \frac{1}{x_2},\ x_3\mapsto x_3, \\ 
\lambda_1 &: x_1\mapsto \frac{1}{x_1},\ x_2\mapsto x_2,\ x_3\mapsto \frac{1}{x_3}, & 
-\lambda_1 &: x_1\mapsto x_1,\ x_2\mapsto \frac{1}{x_2},\ x_3\mapsto x_3, \\ 
-I_3 &: x_1\mapsto \frac{1}{x_1},\ x_2\mapsto \frac{1}{x_2},\ x_3\mapsto \frac{1}{x_3}, & 
\rho_1 &: x_1\mapsto -x_1,\ x_2\mapsto -x_2,\ x_3\mapsto -x_3. 
\end{align*} 
The non-trivial proper normal subgroups $H$ of each $G=G_{4,j,1}$ 
$(1\leq j\leq 7)$ and $G_{4,6,2}$ are: 
\begin{align*} 
G_{4,1,1} &: H=\langle \caa^2 \rangle,  \\ 
G_{4,2,1} &: H=\langle \caa^2 \rangle,  \\ 
G_{4,3,1} &: H=\langle \pm \caa^2 \rangle, 
\langle \pm \caa \rangle, 
\langle -I_3 \rangle, 
\langle \caa^2, -I_3 \rangle,\\ 
G_{4,4,1} &: H=\langle \caa^2 \rangle, \langle \caa^2, \lambda_1 \rangle, 
\langle \caa^2, \caa\lambda_1 \rangle, \langle \caa \rangle,  \\ 
G_{4,5,1} &: H=\langle \caa^2 \rangle, \langle \caa^2,-\lambda_1 \rangle, 
\langle \caa^2, -\caa\lambda_1 \rangle, \langle \caa \rangle,  \\ 
G_{4,6,1} &: H=\langle \caa^2 \rangle, \langle \caa^2, \lambda_1 \rangle, 
\langle \caa^2, -\caa\lambda_1 \rangle, \langle -\caa \rangle,  \\ 
G_{4,6,2} &: H=\langle \caa^2 \rangle, \langle \caa^2, -\lambda_1 \rangle, 
\langle \caa^2, \caa\lambda_1 \rangle, \langle -\caa \rangle,  \\ 
%%%%%%%%%%%%%%%%%%%%%%%%%%%%%%%%%%%%%%%%%%%%%%%%%%%%%%%%%%%%%%%%%%
G_{4,7,1} &: H=\langle \pm\caa^2 \rangle, 
\langle \caa^2, \pm\lambda_1 \rangle, 
\langle \caa^2, \pm \caa\lambda_1 \rangle, 
\langle \pm \caa \rangle, 
\langle \pm \caa, \pm \lambda_1 \rangle,\\
&\hspace{37pt} 
\langle -I_3 \rangle, \langle \caa^2, -I_3 \rangle, 
\langle \caa^2, \lambda_1, -I_3 \rangle, 
\langle \caa^2, \caa\lambda_1, -I_3 \rangle, 
\langle \caa, -I_3 \rangle.\\ 
\end{align*}

%%%%%%%%%%%%%%%%%%%%%%%%%%%%%%%%%%%%%%%%%%%%%%%%%%%%%%%%%%%%%%%%
Case 1: $\langle\caa^2\rangle\leq H$.\\
 
By Lemma \ref{lem:tau1}, we have 
\[
K(x_1,x_2,x_3)^{\langle \caa^2 \rangle} = K(y_1,y_2,y_3)
\] 
where 
\begin{align*} 
y_1 = \frac{x_1x_2+1}{x_1+x_2},\ 
y_2 =\frac{x_1x_2-1}{x_1-x_2},\ 
y_3 =x_3.
\end{align*}  
The actions of $\caa$, $-\caa$, $\la_1$, $-\la_1$, $-I_3$ and $\rho_1$
on $K(y_1,y_2,y_3)$ are given by 
\begin{align*} 
\caa &: y_1\mapsto \frac{1}{y_1},\ y_2\mapsto -\frac{1}{y_2},\ y_3\mapsto y_3, &
-\caa &: y_1\mapsto \frac{1}{y_1},\ y_2\mapsto -\frac{1}{y_2},\ y_3\mapsto \frac{1}{y_3},\\
\lambda_1 &: y_1\mapsto \frac{1}{y_1},\ y_2\mapsto \frac{1}{y_2},\ y_3\mapsto \frac{1}{y_3},&
-\lambda_1 &: y_1\mapsto \frac{1}{y_1},\ y_2\mapsto \frac{1}{y_2},\ y_3\mapsto y_3, \\
-I_3 &: y_1\mapsto y_1,\ y_2\mapsto y_2,\ y_3\mapsto \frac{1}{y_3}, & 
\rho_1 &: y_1\mapsto -y_1,\ y_2\mapsto -y_2,\ y_3\mapsto -y_3. 
\end{align*} 
Define 
\begin{align*} 
z_1 = \frac{y_2-1}{y_2+1},\ 
z_2 = \frac{y_1-1}{y_1+1},\ 
z_3 = \frac{y_3-1}{y_3+1}. 
\end{align*}
Then $K(x_1,x_2,x_3)^{\langle\caa^2\rangle}=K(y_1,y_2,y_3)=K(z_1,z_2,z_3)$ and 
the actions of $\pm \caa$, $\pm\lambda_1$, $-I_3$ and $\rho_1$ 
on $K(z_1,z_2,z_3)$ are given by 
\begin{align*} 
\pm\caa &: z_1\mapsto -\frac{1}{z_1},\ z_2\mapsto -z_2,\ z_3\mapsto \pm z_3, &
\pm \lambda_1 &: z_1\mapsto -z_1,\ z_2\mapsto -z_2,\ z_3\mapsto \mp z_3, \\ 
-I_3 &: z_1\mapsto z_1,\ z_2\mapsto z_2,\ z_3\mapsto -z_3. 
\end{align*}\\

We will consider the $k$-rationality of 
$K(x_1,x_2,x_3)^G=K(z_1,z_2,z_3)^{G/\langle\caa^2\rangle}$ for each $H\leq G$. \\

{\rm (i)} $H=\langle\caa^2\rangle$. 
By Theorem \ref{thAHK}, 
the $k$-rationality of $K(z_1,z_2,z_3)^G$ is reduced to 
that of $K(z_1)^G$ which has been settled in Theorem \ref{thHKKp113}. 
Indeed, we have the following:

For $G=G_{4,j,1}=\langle\pm\caa\rangle$ $(j=1,2)$, 
(resp. $G_{4,3,1}=\langle\caa,-I_3\rangle$)
we may take $K=k(\sqrt{a})$ 
(resp. $K^{\langle -I_3\rangle}=k(\sqrt{a})$) 
on which $G$ acts by $\pm\caa : \sqrt{a}\mapsto -\sqrt{a}$. 
By Theorem \ref{thHKKp113} (2), 
$K(x_1,x_2,x_3)^G$ is $k$-rational if and only if 
$k(\sqrt{a})(z_1)^{\langle\pm\caa\rangle}$ is $k$-rational 
if and only if $(a,-1)_k=0$. 

For $G=\langle\pm\caa,\pm\la_1\rangle$, i.e. 
$G=G_{4,4,1}$, $G_{4,5,1}$, $G_{4,6,1}$ or $G_{4,6,2}$, 
(resp. $G_{4,7,1}=\langle\caa,\la_1,-I_3\rangle$)
we may take $K=k(\sqrt{a},\sqrt{b})$ 
(resp. $K^{\langle -I_3\rangle}=k(\sqrt{a},\sqrt{b})$) 
on which 
$G$ acts by $\pm\caa : \sqrt{a}\mapsto -\sqrt{a}, \sqrt{b}\mapsto \sqrt{b}$, 
$\pm\la_1 : \sqrt{a}\mapsto\sqrt{a}, \sqrt{b}\mapsto-\sqrt{b}$, 
and $K(x_1,x_2,x_3)^G$ is $k$-rational 
if and only if 
$K(z_1)^{\langle\pm\caa,\pm\la_1\rangle}$ is $k$-rational 
if and only if $k(\sqrt{a})(\sqrt{b}z_1)^{\langle\pm\caa\rangle}$ is $k$-rational 
if and only if $(a,-b)_k=0$. \\
%%%%%%%%%%%%%%%%%%%%%%%%%%%

{\rm (ii)} $H=\langle\caa^2,-I_3\rangle$. 
We have $K(x_1,x_2,x_3)^H=K(z_1,z_2,z_3)^{\langle -I_3\rangle}=K(z_1,z_2,z_3^2)$. 

For $G=G_{4,3,1}=\langle\caa,-I_3\rangle$, 
we may take $K=k(\sqrt{a})$  
on which $G$ acts by $\caa : \sqrt{a}\mapsto -\sqrt{a}$, 
and $K(x_1,x_2,x_3)^G$ is $k$-rational 
if and only if 
$k(\sqrt{a})(z_1)^{\langle\caa\rangle}$ is $k$-rational 
if and only if 
$(a,-1)_k=0$. 

For $G=G_{4,7,1}=\langle\caa,\la_1,-I_3\rangle$, 
we may take $K=k(\sqrt{a},\sqrt{b})$ on which 
$G$ acts by $\caa : \sqrt{a}\mapsto -\sqrt{a}, \sqrt{b}\mapsto \sqrt{b}$, 
$\la_1 : \sqrt{a}\mapsto\sqrt{a}, \sqrt{b}\mapsto-\sqrt{b}$, 
and $K(x_1,x_2,x_3)^G$ is $k$-rational if and only if 
$K(z_1)^{\langle\caa,\la_1\rangle}$ is $k$-rational 
if and only if $k(\sqrt{a})(\sqrt{b}z_1)^{\langle\caa\rangle}$ is $k$-rational 
if and only if $(a,-b)_k=0$. \\

{\rm (iii)} $H=\langle\caa^2,\pm\la_1\rangle$ or 
$\langle\caa^2,\la_1,-I_3\rangle$. 
For $H=\langle\caa^2,\la_1\rangle$, we define 
\begin{align*}
w_1=z_1z_2,\ w_2=\frac{z_2}{z_1},\ w_3=\frac{z_3}{z_1}.
\end{align*}
Then we have 
$K(x_1,x_2,x_3)^{\langle\caa^2,\la_1\rangle}
=K(z_1,z_2,z_3)^{\langle \la_1\rangle}=K(w_1,w_2,w_3)$ 
and 
\begin{align*} 
\pm\caa &: w_1\mapsto w_2,\ w_2\mapsto w_1,\ w_3\mapsto \mp\frac{w_1w_3}{w_2}, 
&-I_3 &: w_1\mapsto w_1,\ w_2\mapsto w_2,\ w_3\mapsto -w_3. 
\end{align*}
Hence, by Theorem \ref{thAHK} and Theorem \ref{thEM} 
(or Theorem \ref{thHK}), $K(x_1,x_2,x_3)^G$ is $k$-rational 
for $G=G_{4,4,1}=\langle\caa,\la_1\rangle$, 
$G_{4,6,1}=\langle-\caa,\la_1\rangle$, 
$G_{4,7,1}=\langle \caa,\la_1,-I_3\rangle$. 

Similarly, we see that $K(x_1,x_2,x_3)^G$ is $k$-rational 
when $H=\langle\caa^2,-\la_1\rangle$ or $\langle\caa^2,\la_1,-I_3\rangle$.\\

{\rm (iv)} $H=\langle\caa^2,\pm\caa\la_1\rangle$ or 
$\langle\caa^2,\caa\la_1,-I_3\rangle$. 
For $H=\langle\caa^2,\caa\la_1\rangle$, we define 
\begin{align*}
w_1=\frac{z_1+1}{z_1-1}z_3,\ w_2=\frac{z_1-1}{z_1+1}z_3,\ w_3=z_2.
\end{align*}
Then we have 
$K(x_1,x_2,x_3)^{\langle\caa^2,\caa\la_1\rangle}
=K(z_1,z_2,z_3)^{\langle \caa\la_1\rangle}=K(w_1,w_2,w_3)$ 
and 
\begin{align*} 
\pm\caa =\pm\la_1 &: w_1\mapsto \mp w_2,\ w_2\mapsto \mp w_1,\ w_3\mapsto -w_3, 
&-I_3 &: w_1\mapsto -w_1,\ w_2\mapsto -w_2,\ w_3\mapsto w_3. 
\end{align*}
Hence, by Theorem \ref{thEM}, $K(x_1,x_2,x_3)^G$ is $k$-rational 
for $G=G_{4,4,1}=\langle\caa,\la_1\rangle$, 
$G_{4,6,2}=\langle-\caa,-\la_1\rangle$, 
$G_{4,7,1}=\langle \caa,\la_1,-I_3\rangle$. 

Similarly, $K(x_1,x_2,x_3)^G$ is $k$-rational 
when $H=\langle\caa^2,-\caa\la_1\rangle$ or 
$\langle\caa^2,\caa\la_1,-I_3\rangle$.\\

{\rm (v)} $H=\langle\pm\caa\rangle$ or 
$\langle\caa,-I_3\rangle$. 
For $H=\langle\caa\rangle$, we define 
\begin{align*}
w_1=z_1-\frac{1}{z_1},\ w_2=\left(z_1+\frac{1}{z_1}\right)\bigg/z_2,\ w_3=z_3.
\end{align*}
Then we have 
$K(x_1,x_2,x_3)^{\langle\caa\rangle}
=K(z_1,z_2,z_3)^{\langle \caa\rangle}=K(w_1,w_2,w_3)$ 
and 
\begin{align*} 
\pm\la_1 &: w_1\mapsto - w_1,\ w_2\mapsto w_2,\ w_3\mapsto \mp w_3, 
&-I_3 &: w_1\mapsto w_1,\ w_2\mapsto w_2,\ w_3\mapsto -w_3. 
\end{align*}
Hence, by Theorem \ref{thEM}, $K(x_1,x_2,x_3)^G$ is $k$-rational 
for $G=G_{4,3,1}=\langle\caa,-I_3\rangle$, 
$G_{4,4,1}=\langle\caa,\la_1\rangle$, 
$G_{4,5,1}=\langle\caa,-\la_1\rangle$, 
$G_{4,7,1}=\langle \caa,\la_1,-I_3\rangle$. 

Similarly, $K(x_1,x_2,x_3)^G$ is $k$-rational when 
$H=\langle-\caa\rangle$ or $\langle\caa,-I_3\rangle$.\\

{\rm (vi)} $H=\langle\pm\caa,\pm\la_1\rangle$. 
We have $G=G_{4,7,1}=\langle\caa,\la_1,-I_3\rangle$. 
For $H=\langle\caa,\la_1\rangle$, 
define 
\begin{align*}
w_1=\left(z_1-\frac{1}{z_1}\right)z_3,\ 
w_2=\left(z_1+\frac{1}{z_1}\right)\bigg/z_2,\ 
w_3=z_3^2.
\end{align*}
Then 
$K(x_1,x_2,x_3)^G
=K(z_1,z_2,z_3)^{\langle \caa,\la_1\rangle}=K(w_1,w_2,w_3)$ 
and 
\begin{align*} 
-I_3 &: w_1\mapsto -w_1,\ w_2\mapsto w_2,\ w_3\mapsto w_3. 
\end{align*}
Hence, by Theorem \ref{thEM}, $K(x_1,x_2,x_3)^G$ is $k$-rational.

Similarly, $K(x_1,x_2,x_3)^G$ is $k$-rational 
when $H=\langle\caa,-\la_1\rangle$ or $\langle-\caa,\pm\la_1\rangle$.\\

%%%%%%%%%%%%%%%%%%%%%%%%%%%%%%%%%%%

By (i)--(vi), we obtained the $k$-rationality criterion of $K(x_1,x_2,x_3)^G$.\\

On the other hand, we have 
\[
K(y_1,y_2,y_3)^{\langle \rho_1 \rangle}
=K\left(\frac{y_1}{y_3}, y_1y_3, y_1y_2\right) 
=K\left(\frac{y_1-y_3}{y_1+y_3}, \frac{y_1y_3-1}{y_1y_3+1}, 
\frac{y_1y_2-1}{y_1y_2+1}\right)=K(w_1,w_2,w_3)
\] 
where 
\begin{align*} 
w_1=\frac{y_1y_2-1}{y_1y_2+1},\ 
w_2=\frac{y_1-y_3}{y_1+y_3}+\frac{y_1y_3-1}{y_1y_3+1},\ 
w_3=\frac{y_1-y_3}{y_1+y_3}-\frac{y_1y_3-1}{y_1y_3+1}. 
\end{align*}  
The actions of $\caa$, $\lambda_1$ and $-I_3$ 
on $K(w_1, w_2, w_3)$ are given by  
\begin{align*} 
\pm\caa &: w_1\mapsto -\frac{1}{w_1},\ w_2\mapsto -w_2,\ w_3\mapsto  \pm w_3, & 
\pm\lambda_1 &: w_1\mapsto -w_1,\ w_2\mapsto -w_2,\ w_3\mapsto \mp w_3, \\ 
-I_3 &: w_1\mapsto w_1,\ w_2\mapsto w_2,\ w_3\mapsto -w_3. 
\end{align*} 
These actions are the same as the above on $K(z_1,z_2,z_3)$.

By Theorem \ref{thHKKp113}, 
we may obtain Theorem \ref{thmain1} 
for the cases where $\langle\caa^2\rangle\leq H$.\\

%%%%%%%%%%%%%%%%%%%%%%%%%%%%%%%%%%%%%%%%%%%%%%%%%%%%%%%%%%%%%%%%
Case 2: $H=\langle -I_3 \rangle\simeq \cC_2$.\\

There exist two cases: $G=G_{4,3,1}=\langle\caa,-I_3\rangle\simeq \cC_4\times \cC_2$, 
$G_{4,7,1}=\langle\caa,\la_1,-I_3\rangle\simeq \cD_4\times \cC_2$.  
By Lemma \ref{lem:mI3}, we have 
\[ K(x_1, x_2, x_3)^{\langle -I_3 \rangle}=K(y_1, y_2, y_3) \] 
where 
\begin{align*} 
y_1=\frac{x_1x_2+1}{x_1+x_2},\  
y_2=\frac{x_2x_3+1}{x_2+x_3},\ 
y_3=\frac{x_3x_1+1}{x_3+x_1}. 
\end{align*}  
The actions of $\caa$, $\lambda_1$ and $\rho_1$ 
on $K(y_1,y_2,y_3)$ are given by 
\begin{align*} 
\caa^* &: y_1\mapsto \frac{1}{y_1},\ y_2\mapsto \frac{1}{y_3},\ y_3\mapsto y_2, & 
\lambda_1^* &: y_1\mapsto \frac{1}{y_1},\ y_2\mapsto \frac{1}{y_2},\ y_3\mapsto y_3, \\ 
\rho_1 &: y_1\mapsto -y_1,\ y_2\mapsto -y_2,\ y_3\mapsto -y_3
\end{align*} 
where $\caa^*$ (resp. $\la_1^*$) stands for that 
the action of $\caa$ (resp. $\la_1$) on $K$ is faithful. 

By Theorem \ref{thAHK}, 
the $k$-rationality of $K(x_1,x_2,x_3)^G$ is reduced to 
that of $K(y_2,y_3)^{\langle \caa \rangle}$ 
and $K(y_2,y_3)^{\langle \caa, \lambda_1 \rangle}$, 
which is the rationality problem of two-dimensional algebraic tori. 
By Theorem \ref{thm:Vos}, they are both $k$-rational. 

On the other hand, we have 
\[
K(y_1, y_2, y_3)^{\langle \rho_1 \rangle}=K(z_1,z_2,z_3)
\] 
where $z_1=y_1^2$, $z_2=y_1y_2$, $z_3=y_1y_3$. 
The actions of $\caa$ and $\lambda_1$ 
on $K(z_1, z_2, z_3)$ are given by 
\begin{align*} 
\caa^* &: z_1\mapsto \frac{1}{z_1},\ z_2\mapsto \frac{1}{z_3},\ z_3\mapsto \frac{z_2}{z_1}, & 
\lambda_1^* &: z_1\mapsto \frac{1}{z_1},\ z_2\mapsto \frac{1}{z_2},\ z_3\mapsto \frac{z_3}{z_1}.
\end{align*} 
Hence the $k$-rationality of 
$\big(K(x_1,x_2,x_3)^G\big)^{\langle \rho_1 \rangle}$ 
for $G=G_{4,3,1}$ (resp. $G_{4,7,1}$) 
is reduced to the $k$-rationality of 
three-dimensional algebraic torus for $G_{4,2,2}$ 
(resp. $G_{4,6,3}$). 
By Theorem \ref{thm:Kun}, they are all $k$-rational (see also the last paragraph of Section \ref{sec:notation}).\\

%%%%%%%%%%%%%%%%%%%%%%%%%%%%%%%%%%%%%%%%%%%%%%%%%%%%%%%%%%%%%%%%
Case 3: $H=\langle -\caa^2 \rangle\simeq \cC_2$.\\

There exist two cases: $G=G_{4,3,1}=\langle\caa,-I_3\rangle\simeq \cC_4\times \cC_2$, 
$G_{4,7,1}=\langle\caa,\la_1,-I_3\rangle\simeq \cD_4\times \cC_2$.  
We have 
\[
K(x_1, x_2, x_3)^{\langle -\caa^2 \rangle}=K(y_1, y_2, y_3)
\] 
where 
\begin{align*} 
y_1 = x_1,\ 
y_2 = x_2,\ 
y_3 = \left(x_1+\frac{1}{x_1}+x_2+\frac{1}{x_2}\right)\left(x_3+\frac{1}{x_3}\right). 
\end{align*}  
The actions of $\caa$, $\lambda_1$, $-I_3$ and $\rho_1$  
on $K(y_1,y_2,y_3)$ are given by 
\begin{align*} 
\caa^* &: y_1\mapsto y_2,\ y_2\mapsto \frac{1}{y_1},\ y_3\mapsto y_3, & 
\lambda_1^* &: y_1\mapsto \frac{1}{y_1},\ y_2\mapsto y_2,\ y_3\mapsto y_3, \\ 
-I_3^* &: y_1\mapsto \frac{1}{y_1},\ y_2\mapsto \frac{1}{y_2},\ y_3\mapsto y_3, & 
\rho_1 &: y_1\mapsto -y_1,\ y_2\mapsto -y_2,\ y_3\mapsto y_3 
\end{align*} 
where $\caa^*$ (resp. $\la_1^*$, $-I_3^*$) stands for that 
the action of $\caa$ (resp. $\la_1$, $-I_3$) on $K$ is faithful. 

By Theorem \ref{thAHK}, 
the $k$-rationality of $K(x_1,x_2,x_3)^G$ can be reduced to 
that of $K(y_1,y_2)^{\langle \caa, -I_3 \rangle}$ 
and $K(y_1,y_2)^{\langle \caa, \lambda_1, -I_3 \rangle}$, 
which is the rationality problem of two-dimensional algebraic tori. 
By Theorem \ref{thm:Vos}, they are both $k$-rational. 

%%%%%%%%%%%%%%%%%%%%%%%%%%%%%%%%%%%%%%%%%%%%

On the other hand, we have 
\[
K(y_1,y_2,y_3)^{\langle \rho_1 \rangle}=K(z_1,z_2,z_3)
\] 
where $z_1=y_2/y_1$, $z_2=y_1y_2$, $z_3=y_3$. 
The actions of $\caa$, $-I_3$ and $\lambda_1$ on $K(z_1,z_2,z_3)$ 
are 
\begin{align*} 
\caa^* &: z_1\mapsto \frac{1}{z_2}, \ z_2\mapsto z_1, \ z_3\mapsto z_3, & 
\lambda^*_1 &: z_1\mapsto z_2, \ z_2\mapsto z_1, \ z_3\mapsto z_3, \\ 
-I_3^* &: z_1\mapsto \frac{1}{z_1}, \ z_2\mapsto \frac{1}{z_2}, \ z_3\mapsto z_3. 
\end{align*}  
By Theorem \ref{thAHK}, the $k$-rationality 
of $\big(K(x_1,x_2,x_3)^G\big)^{\langle \rho_1 \rangle}$ 
is reduced to the $k$-rationality of $K(z_1,z_2)^{\langle \caa, -I_3 \rangle}$ 
and $K(z_1,z_2)^{\langle \caa, \lambda_1, -I_3 \rangle}$, 
which is the rationality problem of two-dimensional algebraic torus. 
By Theorem \ref{thm:Vos} again, they are all $k$-rational. 

%%%%%%%%%%%%%%%%%%%%%%%%%%%%%%%%%%%%%%%%%%%%%%%%%%%%%%%%%%%%%%%%%%%
%
\subsection{The 5th crystal system (I)} \label{subsec:5} 
%
%%%%%%%%%%%%%%%%%%%%%%%%%%%%%%%%%%%%%%%%%%%%%%%%%%%%%%%%%%%%%%%%%%%

We consider the following five groups:  
\begin{align*}
G_{5,1,1}&=\langle \cb\rangle\simeq\cC_3,&
G_{5,2,1}&=\langle \cb,-I_3\rangle\simeq\cC_6,\\
G_{5,3,1}&=\langle \cb,-\alpha\rangle\simeq\cS_3,&
G_{5,4,1}&=\langle \cb,\alpha\rangle\simeq\cS_3,&
G_{5,5,1}&=\langle \cb,\alpha,-I_3\rangle\simeq\cD_6.
\end{align*}
The actions of $\cb$, $-I_3$, $\alpha$ and $-\alpha$ 
on $K(x_1, x_2, x_3)$ are given by 
\begin{align*} 
\cb &: x_1\mapsto x_2,\ x_2\mapsto x_3,\ x_3\mapsto x_1, & 
-I_3 &: x_1\mapsto \frac{1}{x_1},\ x_2\mapsto \frac{1}{x_2},\ x_3\mapsto \frac{1}{x_3}, \\ 
\alpha &: x_1\mapsto x_2,\ x_2\mapsto x_1,\ x_3\mapsto x_3, & 
-\alpha &: x_1\mapsto \frac{1}{x_2},\ x_2\mapsto \frac{1}{x_1},\ x_3\mapsto \frac{1}{x_3}. 
\end{align*} 
The non-trivial proper normal subgroups $H$ 
of each $G=G_{5,j,1}$ are as follows: 
\begin{align*} 
G_{5,1,1} &: \ \text{none}, \\ 
G_{5,2,1} &: \ H=\langle \cb \rangle, \langle -I_3 \rangle, \\ 
G_{5,3,1} &: \ H=\langle \cb \rangle, \\ 
G_{5,4,1} &: \ H=\langle \cb \rangle, \\ 
G_{5,5,1} &: \ H=\langle \cb \rangle, \langle -I_3 \rangle, 
\langle \cb, \alpha \rangle, 
\langle \cb, -\alpha \rangle, \langle \cb, -I_3 \rangle.  
\end{align*} 

%%%%%%%%%%%%%%%%%%%%%%%%%%%%%%%%%%%%%%%%%%%%%%%%%%%%%%%%%%%%%%%%
Case 1: $\langle\cb\rangle\leq H$.\\

Define 
\begin{align*} 
X_1 = \frac{x_1-1}{x_1+1},\  
X_2 = \frac{x_2-1}{x_2+1},\  
X_3 = \frac{x_3-1}{x_3+1}.
\end{align*} 
Then the actions of $\cb$, $-I_3$, $\alpha$ and $-\alpha$ 
on $K(X_1, X_2, X_3)$ are given by 
\begin{align*} 
\cb &: X_1\mapsto X_2,\ X_2\mapsto X_3,\ X_3\mapsto X_1, & 
-I_3 &: X_1\mapsto -X_1,\ X_2\mapsto -X_2,\ X_3\mapsto -X_3, \\ 
\alpha &: X_1\mapsto X_2,\ X_2\mapsto X_1,\ X_3\mapsto X_3, & 
-\alpha &: X_1\mapsto -X_2,\ X_2\mapsto -X_1,\ X_3\mapsto -X_3 
\end{align*} 
and $\cb$ acts on $K$ trivially. 
It follows from Lemma \ref{lem:sigma3B} that 
\[
K(X_1, X_2, X_3)^{\langle \cb \rangle} = K(y_1,y_2,y_3)
\] 
where 
\begin{align*} 
y_1 &= X_1+X_2+X_3, \\ 
y_2 &= \frac{X_1X_2^2+X_2X_3^2+X_3X_1^2-3X_1X_2X_3}
{X_1^2+X_2^2+X_3^2-X_1X_2-X_2X_3-X_3X_1},\\ 
y_3 &= \frac{X_1^2X_2+X_2^2X_3+X_3^2X_1-3X_1X_2X_3}
{X_1^2+X_2^2+X_3^2-X_1X_2-X_2X_3-X_3X_1}. 
\end{align*} 
The actions of $-I_3$, $\alpha$ and $-\alpha$ 
on $K(y_1,y_2,y_3)$ are given by 
\begin{align*} 
-I_3 &: y_1\mapsto -y_1,\ y_2\mapsto -y_2,\ y_3\mapsto -y_3, & 
\alpha &: y_1\mapsto y_1,\ y_2\mapsto y_3,\ y_3\mapsto y_2, \\ 
-\alpha &: y_1\mapsto -y_1,\ y_2\mapsto -y_3,\ y_3\mapsto -y_2. 
\end{align*} 
Define 
\[
Y_2=\frac{y_2-1}{y_2+1},\ Y_3=\frac{y_3-1}{y_3+1}.
\]
Then, by Theorem \ref{thAHK},  
the $k$-rationality of $K(x_1,x_2,x_3)^G$ for each $G=G_{5,j,1}$ 
is reduced to that of $K(Y_2,Y_3)^G$, 
which is the rationality problem 
of two-dimensional purely quasi-monomial actions.  
By Theorem \ref{thm:HKK2}, they are all $k$-rational.\\

%%%%%%%%%%%%%%%%%%%%%%%%%%%%%%%%%%%%%%%%%%%%%%%%%%%%%%%%%%%%%%%%
Case 2: $H=\langle -I_3\rangle\simeq \cC_2$.\\

There exist only two cases 
$G=G_{5,2,1}=\langle\cb,-I_3\rangle\simeq \cC_6$, 
$G_{5,5,1}=\langle\cb,\al,-I_3\rangle\simeq \cD_6$. 
By Lemma \ref{lem:mI3}, we have 
\[
K(x_1, x_2, x_3)^{\langle -I_3 \rangle}=K(y_1, y_2, y_3)
\] 
where 
\begin{align*} 
y_1 = \frac{x_1x_2+1}{x_1+x_2},\ 
y_2 = \frac{x_2x_3+1}{x_2+x_3},\  
y_3 = \frac{x_3x_1+1}{x_3+x_1}.
\end{align*} 
The actions of $\cb$ and $\alpha$ on $K(y_1, y_2, y_3)$ are given by 
\begin{align*} 
\cb^* &: y_1\mapsto y_2,\ y_2\mapsto y_3,\ y_3\mapsto y_1, & 
\alpha^* &: y_1\mapsto y_1,\ y_2\mapsto y_3,\ y_3\mapsto y_2.   
\end{align*} 
Since $\langle \cb \rangle$ and $\langle \cb, \alpha \rangle$ 
act on $K$ faithfully,  
the $k$-rationality of $K(x_1,x_2,x_3)^G$ 
for $G_{5,2,1}$ (resp. $G_{5,5,1}$) is reduced to 
the rationality problem of three-dimensional algebraic torus  
corresponding to $G_{5,1,1}$ (resp. $G_{5,4,1}$). 
By Theorem \ref{thm:Kun}, they are both $k$-rational. 

%%%%%%%%%%%%%%%%%%%%%%%%%%%%%%%%%%%%%%%%%%%%%%%%%%%%%%%%%%%%%%%%%%%%

\section{Proof of Theorem \ref{thmain2}}\label{seP2}

We consider the following $15$ groups $G_{7,j,k}$ 
($1\leq j\leq 5$, $1\leq k\leq 3)$ which belong to the 
$7$th crystal system in dimension $3$: 
\begin{align*}
\hspace*{1.5cm}
G_{7,1,k}&=\langle \tau_k,\lambda_k,\cb\rangle&\hspace{-1cm} 
&\simeq \cA_4&\hspace{-1cm} 
&\simeq (\cC_2\times \cC_2)\rtimes \cC_3, \\
G_{7,2,k}&=\langle \tau_k,\lambda_k,\cb,-I_3\rangle&\hspace{-1cm} 
&\simeq \cA_4\times \cC_2&\hspace{-1cm} 
&\simeq (\cC_2\times \cC_2\times \cC_2)\rtimes \cC_3,\\
G_{7,3,k}&=\langle \tau_k,\lambda_k,\cb,-\beta_k\rangle&\hspace{-1cm} 
&\simeq \cS_4&\hspace{-1cm} 
&\simeq (\cC_2\times \cC_2)\rtimes \cS_3, \\
G_{7,4,k}&=\langle \tau_k,\lambda_k,\cb,\beta_k\rangle&\hspace{-1cm} 
&\simeq \cS_4&\hspace{-1cm} 
&\simeq (\cC_2\times \cC_2)\rtimes \cS_3, \\
G_{7,5,k}&=\langle \tau_k,\lambda_k,\cb,\beta_k,-I_3\rangle&\hspace{-1cm} 
&\simeq \cS_4\times \cC_2&\hspace{-1cm} 
&\simeq (\cC_2\times \cC_2\times \cC_2)\rtimes \cS_3.
\end{align*}

The non-trivial proper normal subgroups $H$ of each $G=G_{7,j,k}$ 
$(1\leq k\leq 3)$ are: 
\begin{align*} 
G_{7,1,k} &: H=\langle \tau_k, \lambda_k \rangle, \\ 
G_{7,2,k} &: H=\langle \tau_k, \lambda_k \rangle, 
\langle \tau_k, \lambda_k, \cb \rangle, 
\langle -I_3 \rangle, \langle \tau_k, \lambda_k, -I_3 \rangle,\\ 
G_{7,3,k} &: H=\langle \tau_k, \lambda_k \rangle, 
\langle \tau_k, \lambda_k, \cb \rangle,  \\ 
G_{7,4,k} &: H=\langle \tau_k, \lambda_k \rangle, 
\langle \tau_k, \lambda_k, \cb \rangle,  \\ 
G_{7,5,k} &: H=\langle \tau_k, \lambda_k \rangle,  
\langle \tau_k, \lambda_k, \cb \rangle, 
\langle -I_3 \rangle, 
\langle \tau_k, \lambda_k, -I_3 \rangle, \\ 
&\hspace{37pt} 
\langle \tau_k, \lambda_k, \cb, \beta_k \rangle, 
\langle \tau_k, \lambda_k, \cb, -\beta_k \rangle, 
\langle \tau_k, \lambda_k, \cb, -I_3 \rangle.  
\end{align*} 

%%%%%%%%%%%%%%%%%%%%%%%%%%%%%%%%%%%%%%%%%%%%%%%%%%%%%%%%%%%%%%%%%
%
\subsection{The 7th crystal system (I), (II)}  \label{subsec:71}
%
%%%%%%%%%%%%%%%%%%%%%%%%%%%%%%%%%%%%%%%%%%%%%%%%%%%%%%%%%%%%%%%%%

Let $P_1\in GL_3(\bZ)$ be the matrix defined as Equation (\ref{matP1P2}) 
in Lemma \ref{lem:conversion}. 
Then we may confirm that 
\[
G_{7,j,2} \sim P_1^{-1}G_{7,j,1}P_1 \ \ \text{ for each } 1\leq j\leq 5
\]
where $\sim$ means $GL_3(\bZ)$-conjugation. 
Let $\rho_1$ be $K$-automorphism on $K(x_1,x_2,x_3)$ defined by 
\begin{align*} 
\rho_1 &: x_1\mapsto -x_1,\ x_2\mapsto -x_2,\ x_3\mapsto -x_3
\end{align*} 
as in Lemma \ref{lem:conversion}. 
By Lemma \ref{lem:conversion},  
it suffices to prove the $k$-rationality of 
$K(x_1,x_2,x_3)^G$ and $\big(K(x_1,x_2,x_3)^G\big)^{\langle \rho_1 \rangle}$ 
for $G=G_{7,j,1}$ ($1\leq j\leq 5$) 
which belong to the 7th crystal system (I).\\

%%%%%%%%%%%%%%%%%%%%%%%%%%%%%%%%%%%%%%%%%%%%%%%%%%%%%%%%%%%%%%%%
Case 1: $\langle \tau_1, \lambda_1 \rangle \leq H$.\\

The actions of $\tau_1$, $\lambda_1$, $\cb$, $-I_3$, $\beta_1$ 
and $\rho_1$ on $K(x_1,x_2,x_3)$ are given by 
\begin{align*} 
\tau_1 &: x_1\mapsto \frac{1}{x_1},\ x_2\mapsto \frac{1}{x_2},\ x_3\mapsto x_3 , & 
\lambda_1 &: x_1\mapsto \frac{1}{x_1},\ x_2\mapsto x_2,\ x_3\mapsto \frac{1}{x_3}, \\ 
\cb &: x_1\mapsto x_2,\ x_2\mapsto x_3,\ x_3\mapsto x_1, & 
-I_3 &: x_1\mapsto \frac{1}{x_1},\ x_2\mapsto \frac{1}{x_2},\ x_3\mapsto \frac{1}{x_3}, \\ 
\beta_1 &: x_1\mapsto \frac{1}{x_2},\ x_2\mapsto \frac{1}{x_1},\ x_3\mapsto x_3, & 
\rho_1 &: x_1\mapsto -x_1,\ x_2\mapsto -x_2,\ x_3\mapsto -x_3. 
\end{align*}
By Lemma \ref{lem:tau1lambda1}, we have 
\[
K(x_1,x_2,x_3)^{\langle \tau_1, \lambda_1 \rangle}=K(y_1,y_2,y_3)
\] 
where 
\begin{align*} 
y_1 = \frac{-x_1+x_2+x_3-x_1x_2x_3}{1-x_1x_2+x_2x_3-x_3x_1},\ 
y_2 = \frac{x_1-x_2+x_3-x_1x_2x_3}{1-x_1x_2-x_2x_3+x_3x_1},\ 
y_3 = \frac{x_1+x_2-x_3-x_1x_2x_3}{1+x_1x_2-x_2x_3-x_3x_1}.
\end{align*}  
The actions of $\cb$, $-I_3$, $\beta_1$ and $\rho_1$ 
on $K(y_1,y_2,y_3)$ are given by 
\begin{align*}
\cb &: y_1\mapsto y_2,\ y_2\mapsto y_3,\ y_3\mapsto y_1, & 
-I_3 &: y_1\mapsto \frac{1}{y_1},\ y_2\mapsto \frac{1}{y_2},\ y_3\mapsto \frac{1}{y_3}, \\ 
\beta_1 &: y_1\mapsto y_2,\ y_2\mapsto y_1,\ y_3\mapsto y_3, & 
\rho_1 &: y_1\mapsto -y_1,\ y_2\mapsto -y_2,\ y_3\mapsto -y_3. 
\end{align*} 
Hence the $k$-rationality of $K(x_1,x_2,x_3)^{G_{7,j,1}}$ 
is reduced to that of $K(y_1,y_2,y_3)^{G_{5,j,1}}$. 
By Theorem \ref{thmain1}, they are all $k$-rational. 

On the other hand, 
we have $K(y_1,y_2,y_3)^{\langle \rho_1 \rangle}=K(z_1,z_2,z_3)$ 
where $z_1=y_2y_3$, $z_2=y_1y_3$, $z_3=y_1y_2$. 
The actions of $\cb$, $-I_3$ and $\beta_1$ on $K(z_1,z_2,z_3)$ 
are the same as the actions on $K(y_1,y_2,y_3)$. 
Hence the $k$-rationality of $\big(K(y_1,y_2,y_3)^{G_{7,j,1}}\big)^{\langle \rho_1 \rangle}$ 
is reduced to the $k$-rationality of $K(z_1,z_2,z_3)^{G_{5,j,1}}$. 
By Theorem \ref{thmain1} again, they are all $k$-rational. \\

%%%%%%%%%%%%%%%%%%%%%%%%%%%%%%%%%%%%%%%%%%%%%%%%%%%%%%%%%%%%%%%%
Case 2: $H=\langle -I_3\rangle\simeq\cC_2$.\\

There exist two cases: 
$G=G_{7,2,1}=\langle\ta_1,\la_1,\cb,-I_3\rangle$ $\simeq$ $\cA_4\times\cC_2$, 
$G_{7,5,1}=\langle\ta_1,\la_1,\cb,\be_1,-I_3\rangle$ $\simeq$ $\cS_4\times\cC_2$. 
By Lemma \ref{lem:mI3}, we have 
\[
K(x_1,x_2,x_3)^{\langle -I_3 \rangle}=K(y_1,y_2,y_3)
\] 
where 
\begin{align*} 
y_1 = \frac{x_1x_2+1}{x_1+x_2},\  
y_2 = \frac{x_2x_3+1}{x_2+x_3},\ 
y_3 = \frac{x_3x_1+1}{x_3+x_1}.  
\end{align*} 
The actions of $\tau_1$, $\lambda_1$, $\cb$, $\beta_1$ and $\rho_1$ 
on $K(y_1,y_2,y_3)$ are given by 
\begin{align*} 
\tau_1^* &: y_1\mapsto y_1,\ y_2\mapsto \frac{1}{y_2},\ y_3\mapsto \frac{1}{y_3}, & 
\lambda_1^* &: y_1\mapsto \frac{1}{y_1},\ y_2\mapsto \frac{1}{y_2},\ y_3\mapsto y_3, \\ 
\cb^* &: y_1\mapsto y_2,\ y_2\mapsto y_3,\ y_3\mapsto y_1, & 
\beta_1^* &:  y_1\mapsto y_1,\ y_2\mapsto \frac{1}{y_3},\ y_3\mapsto \frac{1}{y_2}, \\ 
\rho_1 &: y_1\mapsto -y_1,\ y_2\mapsto -y_2,\ y_3\mapsto -y_3. 
\end{align*} 
Hence, as in Case $1$, the $k$-rationality of $K(x_1,x_2,x_3)^G$ and 
$\big(K(x_1,x_2,x_3)^G\big)^{\langle \rho_1 \rangle}$ 
for $G=G_{7,2,1}$ (resp. $G_{7,5,1}$)  
can be reduced to the $k$-rationality of three-dimensional algebraic torus 
for $G_{7,1,k}$ (resp. $G_{7,4,k}$) with $k=1,2$. 
By Theorem \ref{thm:Kun}, 
they are all $k$-rational. 

%%%%%%%%%%%%%%%%%%%%%%%%%%%%%%%%%%%%%%%%%%

\subsection{The 7th crystal system (III)}

We treat the remaining five cases: 
\begin{align*}
\hspace*{1.5cm}
G_{7,1,3}&=\langle \tau_3,\lambda_3,\cb\rangle&\hspace{-1cm} 
&\simeq \cA_4&\hspace{-1cm} 
&\simeq (\cC_2\times \cC_2)\rtimes \cC_3, \\
G_{7,2,3}&=\langle \tau_3,\lambda_3,\cb,-I_3\rangle&\hspace{-1cm} 
&\simeq \cA_4\times \cC_2&\hspace{-1cm} 
&\simeq (\cC_2\times \cC_2\times \cC_2)\rtimes \cC_3,\\
G_{7,3,3}&=\langle \tau_3,\lambda_3,\cb,-\beta_3\rangle&\hspace{-1cm} 
&\simeq \cS_4&\hspace{-1cm} 
&\simeq (\cC_2\times \cC_2)\rtimes \cS_3, \\
G_{7,4,3}&=\langle \tau_3,\lambda_3,\cb,\beta_3\rangle&\hspace{-1cm} 
&\simeq \cS_4&\hspace{-1cm} 
&\simeq (\cC_2\times \cC_2)\rtimes \cS_3, \\
G_{7,5,3}&=\langle \tau_3,\lambda_3,\cb,\beta_3,-I_3\rangle&\hspace{-1cm} 
&\simeq \cS_4\times \cC_2&\hspace{-1cm} 
&\simeq (\cC_2\times \cC_2\times \cC_2)\rtimes \cS_3.
\end{align*}
The actions of 
$\ta_3$, $\la_3$, $\cb$, $-\be_3$, $\be_3$ and $-I_3$  
on $K(x_1,x_2,x_3)$ are given by 
\begin{align*}
\ta_3 &: x_1\mapsto x_2,\ x_2\mapsto x_1,\ x_3\mapsto \frac{1}{x_1x_2x_3},&
\la_3 &: x_1\mapsto x_3,\ x_2\mapsto \frac{1}{x_1x_2x_3},\ x_3\mapsto x_1,\\
\cb &: x_1\mapsto  x_2,\ x_2\mapsto x_3,\ x_3\mapsto x_1,& 
-\be_3 &: x_1\mapsto \frac{1}{x_1},\ x_2\mapsto  \frac{1}{x_2},\ 
x_3\mapsto x_1x_2x_3,\\
\be_3 &: x_1\mapsto x_1,\ x_2\mapsto x_2,\ x_3\mapsto \frac{1}{x_1x_2x_3},& 
-I_3 &: x_1\mapsto \frac{1}{x_1},\ 
x_2\mapsto \frac{1}{x_2},\ x_3\mapsto \frac{1}{x_3}.
\end{align*} 
We will split the problem into seven cases according 
to each $H\leq G_{7,j,3}$.\\

%%%%%%%%%%%%%%%%%%%%%%%%%%%%%%%%%%%%%%%%%%%%%%%%
Case 1: $H=\langle-I_3\rangle\simeq \cC_2\leq G=G_{7,j,3}$, $(j=2,5)$.\\

Step 1. 
By Lemma \ref{lem:mI3}, we have
\begin{align*}
K(x_1,x_2,x_3)^{\langle-I_3\rangle}=K(u_1,u_2,u_3)
\end{align*}
where 
\begin{align*}
u_1=\frac{x_1x_2+1}{x_1+x_2},\ 
u_2=\frac{x_2x_3+1}{x_2+x_3},\
u_3=\frac{x_1x_3+1}{x_1+x_3}.
\end{align*}
Define
\[
v_1=\frac{u_1+1}{u_1-1},\ 
v_2=\frac{u_2+1}{u_2-1},\ 
v_3=\frac{u_3+1}{u_3-1}.
\]
Then $K(u_1,u_2,u_3)=K(v_1,v_2,v_3)$ and 
the actions of $\ta_3$, $\la_3$, $\cb$ and $-\be_3$ 
on $K(v_1,v_2,v_3)$ are 
\begin{align*}
\ta_3^* &: 
v_1\mapsto v_1,\ 
v_2\mapsto -\frac{v_1v_2+v_1v_3+v_2v_3+v_1v_2v_3}{v_2(1+v_1+v_2+v_3)},\ 
v_3\mapsto -\frac{v_1v_2+v_1v_3+v_2v_3+v_1v_2v_3}{v_3(1+v_1+v_2+v_3)},\\
\la_3^*&:
v_1\mapsto -\frac{v_1v_2+v_1v_3+v_2v_3+v_1v_2v_3}{v_1(1+v_1+v_2+v_3)},\ 
v_2\mapsto -\frac{v_1v_2+v_1v_3+v_2v_3+v_1v_2v_3}{v_2(1+v_1+v_2+v_3)},\ 
v_3\mapsto v_3,\\
\cb^*&:
v_1\mapsto v_2,\ 
v_2\mapsto v_3,\ 
v_3\mapsto v_1,\\
-\be_3^*&:v_1\mapsto v_1,\ 
v_2\mapsto -\frac{v_1v_2+v_1v_3+v_2v_3+v_1v_2v_3}{v_3(1+v_1+v_2+v_3)},\ 
v_3\mapsto -\frac{v_1v_2+v_1v_3+v_2v_3+v_1v_2v_3}{v_2(1+v_1+v_2+v_3)}.
\end{align*}
We also define 
\begin{align*}
t_1&=\frac{2v_2(v_1+1)(v_1+v_2)(v_1+v_3)}{(v_1-v_2)(2v_1v_2+v_1^2v_2+
v_1v_2^2+v_1v_3+v_2v_3+
2v_1v_2v_3)},\\
t_2&=-\frac{(v_1+v_3)(-2v_2-v_1v_2-v_2^2+v_1v_3-
v_2v_3)}{(v_2+v_3)(2v_1+v_1^2+v_1v_2+v_1v_3-
v_2v_3)},\\
t_3&=\frac{2v_1(v_2+1)(v_1+v_2)(v_2+v_3)}{(v_1-
v_2)(2v_1v_2+v_1^2v_2+v_1v_2^2+v_1v_3+v_2v_3+2v_1v_2v_3)}.
\end{align*}
Then we may see that 
$K(v_1,v_2,v_3)=K(t_1,t_2,t_3)$. 
This may be confirmed directly as 
\begin{align*}
v_1&=\frac{(t_1 - t_3+2) (t_1 + t_3+2) (t_1 - t_2 t_3)}{D},\\
v_2&=\frac{(t_1 - t_3+2) (t_1 + t_3-2) (t_1 - t_2 t_3)}{D},\\
v_3&=\frac{(t_2-1) (2 - t_1 - t_3) (2 + t_1 + t_3) (t_1 - t_2 t_3)}{(t_2+1) D}
\end{align*}
where 
$D=4 t_1 - t_1^3 + 4 t_2 t_3 + t_1^2 t_2 t_3 + t_1 t_3^2 - t_2 t_3^3$. 
We also see that 
\begin{align*}
t_1&=-\frac{(x_1x_2+1)(x_1x_3-1)(x_2x_3-1)}{(x_1-x_3)(x_1x_2^2x_3-1)},\\
t_2&=\frac{(x_2x_3-1)(1-2x_1+x_1x_2+x_1x_3-2x_1x_2x_3+x_1^2x_2x_3)}{(x_1x_2-1)(1-2x_3+x_1x_3+x_2x_3-2x_1x_2x_3+x_1x_2x_3^2)},\\
t_3&=-\frac{(x_1x_2-1)(x_1x_3-1)(x_2x_3+1)}{(x_1-x_3)(x_1x_2^2x_3-1)}.
\end{align*}
Define
\begin{align*}
p_1'=\frac{-4t_2^2+t_1^2t_2^2-2t_1t_2^3t_3-t_3^2+
2t_2^2t_3^2}{(t_1t_2-t_3)^2},\ 
p_2=-\frac{2}{t_1t_2-t_3},\ 
p_3=\frac{1}{t_2}.
\end{align*}
Then we have $K(t_1,t_2,t_3)=K(p_1',p_2,p_3)$ and 
\[
\ta_3^*(p_1')=p_1',\ 
\la_3^*(p_1')=p_1',\ 
\cb^*(p_1')=\frac{-1+p_3^2+p_1'p_3^2}{p_2^2},\ 
-\be_3^*(p_1')=\frac{p_1'p_3^2}{p_2^2}.
\]
Hence we put 
\[
p_1=\sum_{\sigma\in\langle\cb,-\be_3\rangle}\sigma(p_1').
\]
Then $K(p_1,p_2,p_3)=K(p_1',p_2,p_3)=K(t_1,t_2,t_3)$ and 
\begin{align}
\ta_3^* &: %\sqrt{a}\mapsto-\sqrt{a}, 
p_1\mapsto p_1,\ 
p_2\mapsto -p_2,\ 
p_3\mapsto -p_3,\nonumber\\
\la_3^* &: %\sqrt{b}\mapsto-\sqrt{b}, 
p_1\mapsto  p_1,\ 
p_2\mapsto -p_2,\ 
p_3\mapsto p_3,\label{actG}\\
\cb^* &: %\te\mapsto\te', 
p_1\mapsto p_1,\
p_2\mapsto \frac{1}{p_3},\ 
p_3\mapsto \frac{p_2}{p_3},\nonumber\\
-\be_3^* &: %\sqrt{d}\mapsto-\sqrt{d},
p_1\mapsto p_1,\ 
p_2\mapsto -p_3,\ 
p_3\mapsto -p_2.\nonumber
\end{align}

Step 2. 
Let $K^\prime=K(\sqrt{a})$ $(a\in k)$ be a quadratic extension of $K$ 
and $K^\prime(X,Y,Z)$ be the rational function field 
with $3$ variables $X,Y,Z$ over $K^\prime$. 

We now consider the following action of 
$G^\prime=\langle \gamma,\ta_3,\lambda_3,\cb,-\be_3\rangle
\simeq \langle\gamma,\ta_3,\lambda_3\rangle\rtimes\langle\cb,-\ta_3\be_3\rangle
\simeq (\cC_2\times\cC_2\times\cC_2)\rtimes \cS_3\simeq\cC_2\wr\cS_3$ 
on $K^\prime(X,Y,Z)$: 
\begin{align*}
\gamma^* &: %\sqrt{a}\mapsto-\sqrt{a}, 
X\mapsto -X,\ 
Y\mapsto -Y,\ 
Z\mapsto -Z,\\
\ta_3^* &: %\sqrt{a}\mapsto-\sqrt{a}, 
X\mapsto X,\ 
Y\mapsto -Y,\ 
Z\mapsto -Z,\\
\la_3^* &: %\sqrt{b}\mapsto-\sqrt{b}, 
X\mapsto  X,\ 
Y\mapsto -Y,\ 
Z\mapsto Z,\\
\cb^* &: %\te\mapsto\te', 
X\mapsto Z,\ 
Y\mapsto X,\ 
Z\mapsto Y,\\
-\be_3^* &: %\sqrt{d}\mapsto-\sqrt{d},
X\mapsto X,\ 
Y\mapsto -Z,\ 
Z\mapsto -Y
\end{align*}
where %$G^\prime=\langle \gamma,G \rangle$, 
$(K^{\prime})^{\langle\gamma\rangle}=K$ and 
$\cC_2\wr\cS_3$ is the wreath product of $\cC_2$ and $\cS_3$. 

Define 
\[
P_1=X,\ P_2=\frac{Y}{X},\ P_3=\frac{Z}{X}. 
\]
Then $\gamma,\ta_3,\lambda_3,\cb$ and $-\be_3$ 
act on $K(P_1,P_2,P_3)$ by 
\begin{align*}
\gamma^* &: %\sqrt{a}\mapsto-\sqrt{a}, 
P_1\mapsto -P_1,\ 
P_2\mapsto P_2,\ 
P_3\mapsto P_3,\\
\ta_3^* &: %\sqrt{a}\mapsto-\sqrt{a}, 
P_1\mapsto P_1,\ 
P_2\mapsto -P_2,\ 
P_3\mapsto -P_3,\\
\la_3^* &: %\sqrt{b}\mapsto-\sqrt{b}, 
P_1\mapsto  P_1,\ 
P_2\mapsto -P_2,\ 
P_3\mapsto P_3,\\
\cb^* &: %\te\mapsto\te', 
P_1\mapsto P_1 P_3,\ 
P_2\mapsto \frac{1}{P_3},\ 
P_3\mapsto \frac{P_2}{P_3},\\
-\be_3^* &: %\sqrt{d}\mapsto-\sqrt{d},
P_1\mapsto P_1,\ 
P_2\mapsto -P_3,\ 
P_3\mapsto -P_2.
\end{align*}
We may take $K^\prime=K(\sqrt{a})$ where 
$\gamma(\sqrt{a})=-\sqrt{a}$, $\ta_3(\sqrt{a})=\la_3(\sqrt{a})
=\cb(\sqrt{a})=-\be_3(\sqrt{a})=\sqrt{a}$. 
Define $P_1^\prime=P_1 \sqrt{a}$. 
Then we have 
$K^\prime(P_1,P_2,P_3)^{\langle\gamma\rangle}=K(P_1^\prime,P_2,P_3)$ 
and $G$ acts on $K(P_1^\prime,P_2,P_3)$ by 
\begin{align*}
\ta_3^* &: %\sqrt{a}\mapsto-\sqrt{a}, 
P_1^\prime\mapsto P_1^\prime,\ 
P_2\mapsto -P_2,\ 
P_3\mapsto -P_3,\\
\la_3^* &: %\sqrt{b}\mapsto-\sqrt{b}, 
P_1^\prime\mapsto  P_1^\prime,\ 
P_2\mapsto -P_2,\ 
P_3\mapsto P_3,\\
\cb^* &: %\te\mapsto\te', 
P_1^\prime\mapsto P_1^\prime P_3,\ 
P_2\mapsto \frac{1}{P_3},\ 
P_3\mapsto \frac{P_2}{P_3},\\
-\be_3^* &: %\sqrt{d}\mapsto-\sqrt{d},
P_1^\prime\mapsto P_1^\prime,\ 
P_2\mapsto -P_3,\ 
P_3\mapsto -P_2.
\end{align*}

By Theorem \ref{thHK}, there exists $P_1^{\prime\prime}\in K(P_1^\prime,P_2,P_3)$ 
such that $K(P_1^\prime,P_2,P_3)=K(P_1^{\prime\prime},P_2,P_3)$ and 
$\sigma(P_1^{\prime\prime})=P_1^{\prime\prime}$ for any $\sigma\in G$. 
Hence the action of $G$ on $K(P_1^{\prime\prime},P_2,P_3)$ coincides 
with that on $K(p_1,p_2,p_3)$ given in (\ref{actG}). 

On the other hand, 
$K(P_1^{\prime\prime},P_2,P_3)^G=K^{\prime}(X,Y,Z)^{G^\prime}$ 
is $k$-rational by Theorem \ref{thEM} (or Theorem \ref{thHK}). 
This implies that 
$K(x_1,x_2,x_3)^G=K(p_1,p_2,p_3)^{G/H}$ is $k$-rational for $G=G_{7,2,3}$ and $G_{7,5,3}$. \\

%%%%%%%%%%%%%%%%%%%%%%%%%%%%%%%%%%%%%%%%%%%%%%%%
Case 2: $H=\langle\ta_3,\la_3\rangle\simeq \cC_2\times \cC_2\leq G=G_{7,j,3}$, 
$(j=1,2,3,4,5)$.\\

By Lemma \ref{lemV42}, we have 
$K(x_1,x_2,x_3)^{\langle \ta_3,\la_3\rangle}=K(u_1,u_2,u_3)$ where 
\begin{align*}
\cb^* &: %\te\mapsto\te',
u_1\mapsto  u_2,\ u_2\mapsto  u_3,\ u_3\mapsto u_1,\\
-\be_3^* &: %\de\mapsto\de',
u_1\mapsto \frac{-u_1+u_2+u_3}{u_2u_3},\ 
u_2\mapsto \frac{u_1+u_2-u_3}{u_1u_2},\ 
u_3\mapsto \frac{u_1-u_2+u_3}{u_1u_3},\\
\be_3^* &: %\ga\mapsto-\ga,
u_1\mapsto u_1,\ u_2\mapsto u_3,\ u_3\mapsto u_2,\\
-I_3^* &: %\ep\mapsto-\ep,
u_1\mapsto \frac{-u_1+u_2+u_3}{u_2u_3},\ 
u_2\mapsto \frac{u_1-u_2+u_3}{u_1u_3},\ 
u_3\mapsto \frac{u_1+u_2-u_3}{u_1u_2}.
\end{align*}
(see also \cite[page 106]{HKY11}).  
For $G=G_{7,1,3}=\langle \tau_3,\lambda_3,\cb\rangle$ 
and $G_{7,4,3}=\langle \tau_3,\lambda_3,\cb,\beta_3\rangle$, 
it follows from Theorem \ref{thEM} (or Theorem \ref{thHK}) that 
$K(x_1,x_2,x_3)^G$ is $k$-rational. 

However, we do not know the rationality 
of $K(x_1,x_2,x_3)^G$ for $G=G_{7,2,3}$, $G_{7,3,3}$, $G_{7,5,3}$. 
\\

%%%%%%%%%%%%%%%%%%%%%%%%%%%%%%%%%%%%%%%%%%%%%%%%
Case 3: $H=\langle\ta_3,\la_3,-I_3\rangle\simeq \cC_2\times \cC_2\times \cC_2
\leq G=G_{7,j,3}$, $(j=2,5)$.\\

By Lemma \ref{lemV42}, we have 
$K(x_1,x_2,x_3)^{\langle \ta_3,\la_3\rangle}=K(u_1,u_2,u_3)$ 
where $u_1,u_2,u_3$ and the actions 
of $\cb,-\be_3,\be_3,-I_3$ on $K(u_1,u_2,u_3)$ 
are given as in Case $2$. 
We see that 
$K(x_1,x_2,x_3)^{\langle\ta_3,\la_3,-I_3\rangle}=K(t_1,t_2,t_3)$ 
where $t_i=u_i+(-I_3)(u_i)$ $(1\leq i\leq 3)$. 
The actions of $\cb$ and $\be_3$ on 
$K(t_1,t_2,t_3)$ are given by 
\begin{align}
\cb^* &: %\te\mapsto \te', 
t_1\mapsto  t_2,\ t_2\mapsto t_3,\ t_3\mapsto t_1, 
&\be_3^* &: %\ga\mapsto -\ga, 
t_1\mapsto t_1,\ t_2\mapsto t_3,\ t_3\mapsto t_2.\label{actt}
\end{align}
(see \cite[page 106]{HKY11}). 
By Theorem \ref{thEM} (or Theorem \ref{thHK}), 
$K(x_1,x_2,x_3)^G$ is $k$-rational for $G=G_{7,2,3}, G_{7,5,3}$.\\

%%%%%%%%%%%%%%%%%%%%%%%%%%%%%%%%%%%%%%%%%%%%%%%%
Case 4: $H=\langle\ta_3,\la_3,\cb\rangle\simeq \cA_4\leq G=G_{7,j,3}$, $(j=2,3,4,5)$.\\

By \cite[page 290]{HK10}, there exist $A,B,C\in K(x_1,x_2,x_3)^{\langle\cb\rangle}$ 
such that 
$K(x_1,x_2,x_3)^{\langle\ta_3,\la_3,\cb\rangle}$ $=$ 
$K(u_1,u_2,u_3)^{\langle\cb\rangle}$ $=$ $K(A,B,C)$ 
and $-\be_3,\be_3,-I_3$ act on $K(A,B,C)$ by
\begin{align*}
-\be_3^* &: A\mapsto \frac{-A+5C-7AC^2+27C^3}{1-AC+7C^2+AC^3},\ 
B\mapsto \frac{4(1-AC+7C^2+AC^3)}{aB(1-A^2+4AC)(1+3C^2)},\ 
C\mapsto C,\\
\be_3^* & : A\mapsto -A,\ B\mapsto -B,\ C\mapsto -C,\\
-I_3^* &: A\mapsto -\frac{-A+5C-7AC^2+27C^3}{1-AC+7C^2+AC^3},\ 
B\mapsto -\frac{4(1-AC+7C^2+AC^3)}{aB(1-A^2+4AC)(1+3C^2)},\ C\mapsto -C.
\end{align*}
Define
\[
s_1=C,\ 
s_2=\frac{1-AC+7C^2+AC^3}{1+3C^2},\ 
s_3=\frac{2C(C^2-1)(1+3C^2)}{B(1-AC+7C^2+AC^3)}.
\]
Then $K(A,B,C)=K(s_1,s_2,s_3)$ 
and $-\be_3,\be_3,-I_3$ act on $K(s_1,s_2,s_3)$ by
\begin{align}
-\be_3^* &: %\de\mapsto-\de, 
s_1\mapsto s_1,\ 
s_2\mapsto \frac{1+3s_1^2}{s_2},\ 
s_3\mapsto \frac{-1-6s_1^2-9s_1^4+2s_2+10s_1^2s_2
+4s_1^4s_2-s_2^2-3s_1^2s_2^2}{s_2s_3},\nonumber\\
\be_3^* &: %\ga\mapsto-\ga, 
s_1\mapsto -s_1,\ s_2\mapsto s_2,\ s_3\mapsto s_3,\label{acts}\\
-I_3^* &: %\ep\mapsto-\ep, 
s_1\mapsto -s_1,\ 
s_2\mapsto \frac{1+3s_1^2}{s_2},\ 
s_3\mapsto \frac{-1-6s_1^2-9s_1^4+2s_2+10s_1^2s_2
+4s_1^4s_2-s_2^2-3s_1^2s_2^2}{s_2s_3}.\nonumber
\end{align}

When $G=G_{7,4,3}=\langle H,\be_3\rangle$, 
it follows from Theorem \ref{thEM} (or Theorem \ref{thHK}) that 
$K(x_1,x_2,x_3)^G$ $=$ $K(s_1,s_2,s_3)^{\langle\be_3\rangle}$ is $k$-rational.\\

We note that
\[
-\be_3(s_3)=-I_3(s_3)=
\frac{1}{s_3}\left(2(1+5s_1^2+2s_1^4)-
(1+3s_1^2)\left(s_2+\frac{1+3s_1^2}{s_2}\right)\right).
\]

We will consider the rationality for 
the cases of $G_{7,2,3}$, $G_{7,3,3}$, $G_{7,5,3}$ 
in Section \ref{seP3}.\\

%%%%%%%%%%%%%%%%%%%%%%%%%%%%%%%%%%%%%%%%%%%%%%%%
Case 5: $H=\langle\ta_3,\la_3,\cb,-I_3\rangle\simeq \cA_4\times \cC_2\leq G=G_{7,5,3}$.\\

{}From Equation (\ref{actt}) and Lemma \ref{lem:sigma3B},  
$K(x_1,x_2,x_3)^{\langle \ta_3,\la_3,\cb,-I_3\rangle}=K(s_1,u,v)$ 
where $s_1=s_1(t_1,t_2,t_3)$, $u=u(t_1,t_2,t_3)$ 
and $v=v(t_1,t_2,t_3)$. 
The action of $\beta_3$ on $K(s_1,u,v)=k(\ga)(s_1,u,v)$ is given by 
\begin{align*}
\be_3 : \ga\mapsto -\ga,\ s_1\mapsto s_1,\ u\mapsto v,\ v\mapsto u.
\end{align*}
Hence $K(x_1,x_2,x_3)^G=k(s_1,u+v,\ga(u-v))$ 
is $k$-rational for $G=G_{7,5,3}$.\\

%%%%%%%%%%%%%%%%%%%%%%%%%%%%%%%%%%%%%%%%%%%%%%%%
Case 6: $H=\langle\ta_3,\la_3,\cb,-\be_3\rangle\simeq \cS_4\leq G=G_{7,5,3}$.\\

By Equation (\ref{acts}), we have 
$K(x_1,x_2,x_3)^H=K(s_1,s_2,s_3)^{\langle-\beta_3\rangle}
=k(\de)(s_1,s_2,s_3)^{\langle-\beta_3\rangle}$ 
on which $-\be_3$ acts by 
\begin{align*}
-\be_3 : \de\mapsto-\de,\ &\ s_1\mapsto s_1,\ s_2\mapsto \frac{1+3s_1^2}{s_2},\\ 
&\ s_3\mapsto \frac{-1-6s_1^2-9s_1^4+2s_2+10s_1^2s_2
+4s_1^4s_2-s_2^2-3s_1^2s_2^2}{s_2s_3}.
\end{align*}
By Lemma \ref{lemab}, we get  
$K(x_1,x_2,x_3)^H=K(t_1,t_2,t_3)$ where 
\begin{align*}
t_1&=s_1,\ 
t_2=\frac{s_2-\frac{a}{s_2}}{s_2s_3-\frac{ab}{s_2s_3}},\ 
t_3=\frac{s_3-\frac{b}{s_3}}{s_2s_3-\frac{ab}{s_2s_3}},\ 
a=1+3s_1^2,\\
b&= \frac{-1-6s_1^2-9s_1^4+2s_2+10s_1^2s_2+4s_1^4s_2-s_2^2-3s_1^2s_2^2}{s_2}\\
&=-A (1 + 3 s_1^2) + 4s_1^2 (1 + s_1^2),\\
A&=s_2+\frac{1+3s_1^2}{s_2}-2.
\end{align*}
Then the action of $\beta_3$ on $K(t_1,t_2,t_3)$ is 
\begin{align*}
\beta_3 : \ga\mapsto -\ga,\ t_1\mapsto -t_1,\ t_2\mapsto t_2,\ t_3\mapsto t_3.
\end{align*}
Hence $K(x_1,x_2,x_3)^G=k(\ga t_1,t_2,t_3)$ is $k$-rational.\\ 

%%%%%%%%%%%%%%%%%%%%%%%%%%%%%%%%%%%%%%%%%%%%%%%%
Case 7: $H=\langle\ta_3,\la_3,\cb,\be_3\rangle\simeq \cS_4\leq G=G_{7,5,3}$.\\

By Equation (\ref{acts}), 
$K(x_1,x_2,x_3)^H=K(t_1,t_2,t_3)$ where $t_1=s_1^2, t_2=s_2, t_3=s_3$. 
Define 
\begin{align*}
u_1=\frac{1+3t_1}{t_2}+t_2-4,\ 
u_2=\frac{1+3t_1}{t_2}-t_2,\ 
u_3=6t_3.
\end{align*} 
Then $K(t_1,t_2,t_3)=K(u_1,u_2,u_3)=k(\de)(u_1,u_2,u_3)$ and 
\begin{align*}
-\be_3 &: \de\mapsto-\de,\ 
u_1\mapsto u_1,\ 
u_2\mapsto -u_2,\ 
u_3\mapsto \frac{10u_1^2+7u_1^3+u_1^4-18u_2^2
-7u_1u_2^2-2u_1^2u_2^2+u_2^4}{u_3}.
\end{align*}
Define 
\begin{align*}
v_1=\frac{7u_1+2u_1^2-2u_2^2}{3u_1},\ 
v_2=\frac{2u_2}{\de u_1},\ 
v_3=\frac{2u_3}{3u_1}.
\end{align*}
Then $K(u_1,u_2,u_3)=K(v_1,v_2,v_3)$ and 
\begin{align*}
-\be_3 &: \de\mapsto-\de,\ 
v_1\mapsto v_1,\ 
v_2\mapsto v_2,\ 
v_3\mapsto \frac{-1+v_1^2-2dv_2^2}{v_3}
\end{align*}
where $d=\de^2\in k$. 
By Lemma \ref{lemXY}, we have 
$K(x_1,x_2,x_3)^G=K(X,Y,v_1,v_2)$ where $X^2-dY^2=-1+v_1^2-2dv_2^2$. 
Hence $K(x_1,x_2,x_3)^G=k(X+v_1,Y,v_2)$ is $k$-rational.

%%%%%%%%%%%%%%%%%%%%%%%%%%%%%%%%%%%%%%%%%%%%%%%%%%%%
\section{Proof of Proposition \ref{prop1}}\label{seP3}

We consider the three cases: 
\[
G_{7,2,3}=\langle H,-I_3\rangle,\ 
G_{7,3,3}=\langle H,-\be_3\rangle,\ 
G_{7,5,3}=\langle H,\be_3,-I_3\rangle
\]
where 
$H=\langle\ta_3,\la_3,\cb\rangle\simeq \cA_4$.\\

%%%%%%%%%%%%%%%%%%%%%%%%%%%%%%%%%%%%%%%%%%%%%%%%%%
{\it Proof of Proposition \ref{prop1} (1).} 

We consider the case where $G=G_{7,2,3}=\langle H,-I_3\rangle\simeq \cA_4\times \cC_2$. 

%%%%%%%%%%%%%%%%%%%%%%%%%%%%%%%%%%%%%%
By Equation (\ref{acts}), we have 
$K(x_1,x_2,x_3)^H=K(s_1,s_2,s_3)$ 
and $-\be_3,\be_3,-I_3$ act on $K(s_1,s_2,s_3)$ by
\begin{align*}
-\be_3^* &: %\de\mapsto-\de, 
s_1\mapsto s_1,\ 
s_2\mapsto \frac{1+3s_1^2}{s_2},\ 
s_3\mapsto \frac{-1-6s_1^2-9s_1^4+2s_2+10s_1^2s_2
+4s_1^4s_2-s_2^2-3s_1^2s_2^2}{s_2s_3},\nonumber\\
\be_3^* &: %\ga\mapsto-\ga, 
s_1\mapsto -s_1,\ s_2\mapsto s_2,\ s_3\mapsto s_3,\\
-I_3^* &: %\ep\mapsto-\ep, 
s_1\mapsto -s_1,\ 
s_2\mapsto \frac{1+3s_1^2}{s_2},\ 
s_3\mapsto \frac{-1-6s_1^2-9s_1^4+2s_2+10s_1^2s_2
+4s_1^4s_2-s_2^2-3s_1^2s_2^2}{s_2s_3}.\nonumber
\end{align*}
Define 
\begin{align*}
t_1&=\frac{1+3s_1^2-2s_2-2s_1^2s_2+s_2^2}
{(1+s_1-s_2)(1+3s_1^2-s_2-s_1s_2)},\\
t_2&=\frac{s_1(1+3s_1^2-s_2^2)}{1+3s_1^2-2s_2-2s_1^2s_2+s_2^2},\\
t_3&=\frac{2(1+3s_1^2-2s_2+s_2^2)s_3}
{(1+s_1-s_2)(1+3s_1^2-2s_2-2s_1^2s_2+s_2^2)}.
\end{align*}
Then $K(x_1,x_2,x_3)^H=K(s_1,s_2,s_3)=K(t_1,t_2,t_3)$ because 
\begin{align*}
s_1&=\frac{1-t_1^2+t_1^2t_2^2}{-1+4t_1-t_1^2+t_1^2t_2^2},\\
s_2&=\frac{2(-1+2t_1-2t_1^2+t_1^3+2t_1^2t_2-t_1^3t_2-
t_1^3t_2^2+t_1^3t_2^3)}{-1+5t_1-5t_1^2+t_1^3-t_1t_2+
4t_1^2t_2-t_1^3t_2+t_1^2t_2^2-t_1^3t_2^2+t_1^3t_2^3},\\
s_3&=\frac{2(t_1t_3-t_1^3t_3+t_1^3t_2^2t_3)}{(1-t_1+t_1t_2)(-1+
4t_1-t_1^2+t_1^2t_2^2)^2}.
\end{align*}
The actions of $-\be_3,\be_3,-I_3$ on $K(t_1,t_2,t_3)$ 
are given by
\begin{align*}
-\be_3^* &: %\de\mapsto-\de,
t_1\mapsto t_1,\ 
t_2\mapsto -t_2,\ 
t_3\mapsto \frac{2\left(5-t_2^2+8(t_2^2-1)t_1-(t_2^2-1)(5-t_2^2)t_1^2\right)}
{t_3},\\
\be_3^*&:%\ga\mapsto-\ga, 
t_1\mapsto -\frac{1}{t_1(t_2^2-1)},\ 
t_2\mapsto -t_2,\ 
t_3\mapsto -\frac{t_3}{t_1(t_2-1)},\\
-I_3^* &: %\ep\mapsto-\ep, 
t_1\mapsto -\frac{1}{t_1(t_2^2-1)},\ 
t_2\mapsto t_2,\ 
t_3\mapsto \frac{2\left(5-t_2^2+8(t_2^2-1)t_1-(t_2^2-1)(5-t_2^2)t_1^2\right)}
{t_1(t_2+1)t_3}.
\end{align*}
We also define 
\[
u_1=\frac{3+t_2^2}{4-5t_1+t_1t_2^2},\ 
u_2=t_2,\ 
u_3=-\frac{(5-t_2^2)t_3}{4-5t_1+t_1t_2^2}.
\]
Then $K(t_1,t_2,t_3)=K(u_1,u_2,u_3)$ 
and $-\be_3,\be_3,-I_3$ act on $K(u_1,u_2,u_3)$  by 
\begin{align}
-\be_3^* &: %\de\mapsto-\de,
u_1\mapsto u_1,\ 
u_2\mapsto -u_2,\ 
u_3\mapsto \frac{2(5-u_2^2)(1+u_1^2-u_2^2)}{u_3},\nonumber\\
\be_3^*&:%\ga\mapsto-\ga, 
u_1\mapsto -\frac{(u_2^2-1)(3-4u_1+u_2^2)}{4+3u_1-4u_2^2+u_1u_2^2},\ 
u_2\mapsto -u_2,\ 
u_3\mapsto -\frac{(u_2+1)(5-u_2^2)u_3}{4+3u_1-4u_2^2+u_1u_2^2},\label{eqb3}\\
-I_3^* &: %\ep\mapsto-\ep, 
u_1\mapsto -\frac{(u_2^2-1)(3-4u_1+u_2^2)}{4+3u_1-4u_2^2+u_1u_2^2},\ 
u_2\mapsto u_2,\ 
u_3\mapsto \frac{2(u_2-1)(1+u_1^2-u_2^2)(5-u_2^2)^2}
{(4+3u_1-4u_2^2+u_1u_2^2)u_3}.\nonumber
\end{align}
Define 
\[
v_1=-\frac{4+3u_1-4u_2^2+u_1u_2^2}{(u_2+1)(5-u_2^2)},\ 
v_2=-\frac{u_2-1}{u_2+1},\ 
v_3=\frac{(3+u_2^2)u_3}{(u_2+1)(4+3u_1-4u_2^2+u_1u_2^2)}.
\]
Then $K(x_1,x_2,x_3)^H=K(u_1,u_2,u_3)=K(v_1,v_2,v_3)$ 
and $-\be_3,\be_3,-I_3$ act on $K(v_1,v_2,v_3)$  by 
\begin{align}
-\be_3^* &: %\de\mapsto-\de,
v_1\mapsto \frac{v_1}{v_2},\ 
v_2\mapsto \frac{1}{v_2},\ 
v_3\mapsto \frac{2(v_1^2+v_2+4v_1v_2+3v_1^2v_2+3v_2^2+4v_1v_2^2+
v_1^2v_2^2+v_2^3)}{v_1^2v_2v_3},\nonumber\\
\be_3^* &: v_1\mapsto \frac{1}{v_1},\ 
v_2\mapsto \frac{1}{v_2},\ 
v_3\mapsto \frac{v_1v_3}{v_2^2},\label{actv}\\
-I_3^* &: v_1\mapsto \frac{v_2}{v_1},\ 
v_2\mapsto v_2,\ 
v_3\mapsto \frac{2(v_1^2+v_2+4v_1v_2+3v_1^2v_2+3v_2^2+4v_1v_2^2+v_1^2v_2^2+v_2^3)}{v_1v_3}.\nonumber
\end{align}
We note that 
\begin{align*}
-I_3(v_3)=\frac{1}{v_3}
\left(8v_2(v_2+1)+2(1+3v_2+v_2^2)\left(v_1+\frac{v_2}{v_1}\right)\right).
\end{align*}
Define 
\[
w_1=\frac{1}{2}\left(v_1+\frac{v_2}{v_1}\right),\ 
w_2=\frac{1}{2}\left(v_1-\frac{v_2}{v_1}\right),\ 
w_3=\frac{v_3}{2}.
\]
Then $K(w_1,w_2,w_3)=K(v_1,v_2,v_3)$ 
and $-\be_3,\be_3,-I_3$ act on $K(w_1,w_2,w_3)$  by 
\begin{align*}
-\be_3^* &: %\de\mapsto-\de,
w_1\mapsto \frac{w_1}{w_1^2-w_2^2},\ 
w_2\mapsto \frac{w_2}{w_1^2-w_2^2},\\
&\quad w_3\mapsto \frac{w_1(1+w_1+w_1^2)^2-(2+3w_1+4w_1^2+2w_1^3)w_2^2
+(w_1+2)w_2^4}{(w_1+w_2)(w_1^2-w_2^2)w_3},\\
\be_3^*&:
w_1\mapsto \frac{w_1}{w_1^2-w_2^2},\ 
w_2\mapsto -\frac{w_2}{w_1^2-w_2^2},\ 
w_3\mapsto \frac{w_3}{(w_1-w_2)(w_1^2-w_2^2)},\\
-I_3^*&:
w_1\mapsto w_1,\ 
w_2\mapsto -w_2,\\
&\quad w_3\mapsto\frac{w_1(1+w_1+w_1^2)^2-(2+3w_1+4w_1^2+2w_1^3)w_2^2
+(w_1+2)w_2^4}{w_3}.
\end{align*}

We may take $K=k(\sqrt{d})$ with $-I_3(\sqrt{d})=-\sqrt{d}$. 
Define $W_1=w_1$, $W_2=w_2/\sqrt{d}$. 
By Lemma \ref{lemXY}, we have 
$K(x_1,x_2,x_3)^{G}=k(\sqrt{d})(w_1,w_2,w_3)^{\langle-I_3\rangle}=k(X,Y,W_1,W_2)$ 
where 
\begin{align*}
X^2-dY^2=W_1(1+W_1+W_1^2)^2-d(2+3W_1+4W_1^2+2W_1^3)W_2^2
+d^2(W_1+2)W_2^4.
\end{align*}
This completes the proof of Proposition \ref{prop1} (1). \qed\\

%%%%%%%%%%%%%%%%%%%%%%%%%%%%%%%%%%%%%%%%%%%%%%%%%%
{\it Proof of Proposition \ref{prop1} (2).}

We consider the case where $G=G_{7,3,3}=\langle H,-\be_3\rangle\simeq \cS_4$. 

%%%%%%%%%%%%%%%%%%%%%%%%%%%%%%%%%%%%%%%%%%%%%%%%%%%%%%%%%%%%%

{}From Equation (\ref{eqb3}), we have 
$K(x_1,x_2,x_3)^G=K(u_1,u_2,u_3)^{\langle-\be_3\rangle}$ where 
$K=k(\sqrt{d})$ and 
\begin{align*}
-\be_3 &: \sqrt{d}\mapsto-\sqrt{d},\ 
u_1\mapsto u_1,\ 
u_2\mapsto -u_2,\ 
u_3\mapsto \frac{2(5-u_2^2)(1+u_1^2-u_2^2)}{u_3}.
\end{align*}

Now we assume that $\sqrt{5}\in k$. 

Define 
\begin{align*}
U_1=u_1,\ U_2=\frac{u_2}{\sqrt{d}},\ U_3=\frac{u_3}{u_2-\sqrt{5}}.
\end{align*}
Then $K(u_1,u_2,u_3)=K(U_1,U_2,U_3)$ and 
\begin{align*}
-\be_3 &: \sqrt{d}\mapsto-\sqrt{d},\ 
U_1\mapsto U_1,\ 
U_2\mapsto U_2,\ 
U_3\mapsto \frac{2(1+U_1^2-dU_2^2)}{U_3}.
\end{align*}
By Lemma \ref{lemXY}, 
we get $K(U_1,U_2,U_3)^{\langle-\be_3\rangle}=k(X,Y,U_1,U_2)$ 
where $X^2-dY^2=2(1+U_1^2-dU_2^2)$. 
Hence Proposition \ref{prop1} (2) follows from Theorem \ref{thOhm}. \qed\\

%%%%%%%%%%%%%%%%%%%%%%%%%%%%%%%%%%%%%%%%%%%%%%%%%%
{\it Proof of Proposition \ref{prop1} (3).}

We consider the case where $G=G_{7,5,3}=\langle H,\be_3,-I_3\rangle\simeq \cS_4\times \cC_2$.

By Equation (\ref{actv}), we have $K(x_1,x_2,x_3)^H=K(v_1,v_2,v_3)$. 
Define 
\begin{align*}
p_1=-\frac{v_1 (v_2-1)}{(v_1-1) (v_1 - v_2)},\ 
p_2=\frac{v_1^2 - v_2}{(v_1-1) (v_1 - v_2)},\ 
p_3=\frac{4 v_1^2 (v_1 + v_2^2) v_3}{(v_1 - v_2)^3 (v_1 v_2+1)}.
\end{align*}
It follows from 
\begin{align*}
v_1&=\frac{p_1+p_2+1}{p_1+p_2-1},\ 
v_2=\frac{1+2p_1+p_1^2-p_2^2}{1-2p_1+p_1^2-p_2^2},\\
v_3&=-\frac{2(3p_1+p_1^3+p_2+p_1^2p_2-p_1p_2^2-p_2^3)p_3}
{(p_1-p_2-1)^2(-3p_1+3p_1^2-p_1^3+p_1^4+p_2+2p_1p_2+p_1^2p_2-
p_2^2+p_1p_2^2-2p_1^2p_2^2-p_2^3+p_2^4)}
\end{align*}
that $K(v_1,v_2,v_3)=K(p_1,p_2,p_3)$ and 
\begin{align*}
\be_3 : \sqrt{a}\mapsto -\sqrt{a},\ \sqrt{b}\mapsto \sqrt{b},\ 
&p_1\mapsto -p_1,\ p_2\mapsto -p_2,\ p_3\mapsto -p_3,\\
-I_3 : \sqrt{a}\mapsto \sqrt{a},\ \sqrt{b}\mapsto -\sqrt{b},\ 
&p_1\mapsto p_1,\ p_2\mapsto -p_2,\\ 
&p_3\mapsto \frac{-1-5p_1^2-7p_2^2-(p_1^2-p_2^2)(3p_1^2+17p_2^2)+9(p_1^2-p_2^2)^3}{p_3}
\end{align*}
where $K=k(\sqrt{a},\sqrt{b})$. 
Define 
\[
P_1=\frac{p_1}{\sqrt{a}},\ 
P_2=\frac{p_2}{\sqrt{a}\sqrt{b}},\ 
P_3=\frac{p_3}{\sqrt{a}}.
\]
Then, by Lemma \ref{lemXY}, we have 
$K(x_1,x_2,x_3)^{G}=k(X,Y,P_1,P_2)$ 
 and 
\begin{align*}
X^2-bY^2=-\tfrac{1}{a}-5P_1^2-7bP_2^2-a(P_1^2-bP_2^2)(3P_1^2+17bP_2^2)+9a^2(P_1^2-bP_2^2)^3.
\end{align*}
This completes the proof of Proposition \ref{prop1} (3). \qed\\

%%%%%%%%%%%%%%%%%%%%%%%%%%%%%%%%%%%%%%%%%%%%%%%%%%
{\it Proof of Proposition \ref{prop1} (4).} 

We assume that $\sqrt{-3}\in k$. 
Take $G=G_{7,2,3}=\langle H,-I_3\rangle\simeq \cA_4\times \cC_2$.

By Equation (\ref{acts}), we have $K(x_1,x_2,x_3)^H=K(s_1,s_2,s_3)$. 
Define 
\begin{align*}
t_1&=2s_1,\\
t_2&=\frac{-s_1-7s_1^3+2s_1s_2+2s_1^3s_2+(1+5s_1^2+2s_1^4-
s_2-3s_1^2s_2)\sqrt{-3}}{(s_1^2-1)(1-s_2+s_1\sqrt{-3})},\\
t_3&=\frac{4(s_1^2+1)s_2s_3}{(s_1^2-1)(1-s_2+s_1\sqrt{-3})}.
\end{align*}
Then $K(x_1,x_2,x_3)^H=K(s_1,s_2,s_3)=K(t_1,t_2,t_3)$ because 
\begin{align*}
s_1&=\frac{t_1}{2},\\
s_2&=\frac{4t_1+7t_1^3-8t_2+2t_1^2t_2+(-8-10t_1^2-t_1^4-4t_1t_2+t_1^3t_2)\sqrt{-3}}
{2(4t_1+t_1^3-4t_2+t_1^2t_2-(4+3t_1^2)\sqrt{-3})},\\
s_3&=-\frac{(t_1^2-4)t_1t_3(2+t_1\sqrt{-3})}
{2(-4t_1-7t_1^3+8t_2-2t_1^2t_2+(8+10t_1^2+t_1^4+4t_1t_2-t_1^3t_2)\sqrt{-3})}.
\end{align*}
The action of $-I_3$ on $K(t_1,t_2,t_3)$ becomes 
\begin{align*}
-I_3 : \sqrt{b}\mapsto -\sqrt{b},\ 
t_1\mapsto -t_1,\ t_2\mapsto t_2,\ t_3\mapsto \frac{(t_1^2+4)(t_1^2-t_2^2+1)}{t_3}.
\end{align*}
where $K=k(\sqrt{b})$. 

Now we also assume that $\sqrt{-1}\in k$. 
Define 
\begin{align*}
u_1=\frac{t_1}{\sqrt{b}},\ 
u_2=t_2,\ 
u_3=\frac{t_3}{t_1+2\sqrt{-1}}.
\end{align*}
Then $K(t_1,t_2,t_3)=K(u_1,u_2,u_3)$ and 
\begin{align*}
-I_3 : \sqrt{b}\mapsto -\sqrt{b},\ 
u_1\mapsto u_1,\ u_2\mapsto u_2,\ u_3\mapsto -\frac{bu_1^2-u_2^2+1}{u_3}.
\end{align*}
By Lemma \ref{lemXY}, we have 
$K(u_1,u_2,u_3)^{\langle -I_3\rangle}=k(X,Y,u_1,u_2)$ where $X^2-bY^2=-bu_1^2+u_2^2-1$. 
Hence $K(x_1,x_2,x_3)^G=K(u_1,u_2,u_3)^{\langle -I_3\rangle}=k(X,Y+u_1,u_2)$ 
is $k$-rational. \qed\\

%%%%%%%%%%%%%%%%%%%%%%%%%%%%%%%%%%%%%%%%%%%%%%%%%%
{\it Proof of Proposition \ref{prop1} (5).} 

We assume that char $k=3$. 

By Equation (\ref{acts}), we have 
$K(x_1,x_2,x_3)^H=K(s_1,s_2,s_3)$ where $H=\langle\ta_3,\la_3,\cb\rangle$ 
and 
\begin{align*}
-\be_3^* &: 
s_1\mapsto s_1,\ 
s_2\mapsto \frac{1}{s_2},\ 
s_3\mapsto \frac{-1-s_2+s_1^2s_2+s_1^4s_2-s_2^2}{s_2s_3},\\
\be_3^* &: 
s_1\mapsto -s_1,\ 
s_2\mapsto s_2,\ 
s_3\mapsto s_3,\\
-I_3^* &:  
s_1\mapsto -s_1,\ 
s_2\mapsto \frac{1}{s_2},\ 
s_3\mapsto \frac{-1-s_2+s_1^2s_2+s_1^4s_2-s_2^2}{s_2s_3}.
\end{align*}
Define 
\[
t_1=s_1,\ 
t_2=\frac{(s_1^2-1)(s_2-1)}{s_1(s_2+1)},\ 
t_3=\frac{s_3}{s_1(s_2+1)}.
\]
Then $K(s_1,s_2,s_3)=K(t_1,t_2,t_3)$ and 
\begin{align*}
-\be_3^* &: 
t_1\mapsto t_1,\ 
t_2\mapsto -t_2,\ 
t_3\mapsto \frac{t_1^2-t_2^2+1}{t_3},\\
\be_3^*&:
t_1\mapsto -t_1,\ 
t_2\mapsto -t_2,\ 
t_3\mapsto -t_3,\\
-I_3^*&:
t_1\mapsto -t_1,\ 
t_2\mapsto t_2,\ 
t_3\mapsto -\frac{t_1^2-t_2^2+1}{t_3}.
\end{align*}
Hence $K(x_1,x_2,x_3)^{G_{7,3,3}}=K(t_1,t_2,t_3)^{\langle-\be_3\rangle}$ 
and $K(x_1,x_2,x_3)^{G_{7,2,3}}=K(t_1,t_2,t_3)^{\langle-I_3\rangle}$ 
are $k$-rational. 
For $G_{7,5,3}=\langle H,\be_3,-I_3\rangle$, 
we may take $K=k(\sqrt{a},\sqrt{b})$ with 
$\be_3(\sqrt{a})=-\sqrt{a}, \be_3(\sqrt{b})=\sqrt{b}$. 
Define $T_1=\sqrt{a}t_1$, $T_2=\sqrt{a}t_2$, $T_3=\sqrt{a}t_3$. 
Then $K(t_1,t_2,t_3)^{\langle\be_3\rangle}=k(\sqrt{b})(T_1,T_2,T_3)$ 
and 
\begin{align*}
-I_3&: \sqrt{b}\mapsto-\sqrt{b},\ 
T_1\mapsto -T_1,\ 
T_2\mapsto T_2,\ 
T_3\mapsto -\frac{T_1^2-T_2^2+a}{T_3}.
\end{align*}
By Lemma \ref{lemXY}, we have 
$K(T_1,T_2,T_3)^{\langle -I_3\rangle}=k(X,Y,U_1,U_2)$ where 
$U_1=T_1/\sqrt{b}$, $U_2=T_2$ and $X^2-bY^2=-bU_1^2+U_2^2-a$. 
Hence $K(x_1,x_2,x_3)^{G_{7,2,5}}=K(T_1,T_2,T_3)^{\langle-I_3\rangle}=k(X,Y+U_1,U_2)$ 
is $k$-rational. \qed

%%%%%%%%%%%%%%%%%%%%%%%%%%%%%%%%%%%%%%%%%%%%%%%%%%%%%%%%%%%%%%%

\section{Proof of Theorem \ref{thmain3}}\label{seP4}

Write $M_1=\bigoplus_{1\leq i\leq 3}\bZ x_i$, 
$M_2=\bigoplus_{1\leq i\leq 2}\bZ y_j$ and 
$k(M)=k(x_1,x_2,x_3,y_1,y_2)$. 
Then $N_1=\{\sigma\in G\mid\sigma(x_i)=x_i$ $(i=1,2,3)\}$ and 
$N_2=\{\sigma\in G\mid\sigma(y_j)=y_j$ $(j=1,2)\}$. 

By Theorem \ref{thp12}, without loss of generality, 
we may assume that $M$ is a faithful $G$-lattice, 
i.e. $N=\{\sigma\in G\mid \sigma(x_i)=x_i, \sigma(y_j)=y_j$ 
$(i=1,2,3,j=1,2)\}=N_1\cap N_2=\{1\}$. \\

Step 1. Case $(G/N_2,N_1N_2/N_2)\not\simeq(\cC_4,\cC_2)$, $(\cD_4,\cC_2)$.\\ 

We have $k(M)^G=K(y_1,y_2)^{G/N_2}$ and $G/N_2$ acts on 
$K(y_1,y_2)$ by purely quasi-monomial $k$-automorphisms 
where $K=k(x_1,x_2,x_3)^{N_2}$. 
By Theorem \ref{thm:HKK2}, 
if $(G/N_2,N_1N_2/N_2)\not\simeq(\cC_4,\cC_2)$, $(\cD_4,\cC_2)$, 
then $k(M)^G=K(y_1,y_2)^{G/N_2}$ is $K^{G/N_2}$-rational. 
It follows from Theorem \ref{thHKHR} that $K^{G/N_2}=k(x_1,x_2,x_3)^G$ 
is $k$-rational. 
Hence $k(M)$ is $k$-rational if 
$(G/N_2,N_1N_2/N_2)\not\simeq(\cC_4,\cC_2)$, $(\cD_4,\cC_2)$. \\

Step 2. Case $(G/N_1,N_1N_2/N_1)\not\simeq$ $(G_{7,j,3},\cA_4)$, 
$(G_{7,j,3},\cC_2\times \cC_2)$ $(j=2,3,5)$.\\

As in Step 1, we have 
$k(M)^G=K(x_1,x_2,x_3)^{G/N_1}$ and $G/N_1$ acts on 
$K(x_1,x_2,x_3)$ by purely quasi-monomial $k$-automorphisms 
where $K=k(y_1,y_2)^{N_1}$. 
It follows from Theorem \ref{thHaj87} that $K^{G/N_1}=k(y_1,y_2)^G$ 
is $k$-rational. 
Thus, by Theorem \ref{thmain2}, 
if $(G/N_1,N_1N_2/N_1)\not\simeq (G_{4,j,k}, \cC_2), 
(G_{4,j,k},\cC_2\times \cC_2)$, $(G_{7,j,3},\cA_4)$, 
$(G_{7,j,3},\cC_2\times \cC_2)$ $(j=2,3,5)$, then 
$k(M)^G$ is $K^{G/N_1}$-rational, and hence $k$-rational. 

For the case where $G/N_1=G_{4,j,k}$, 
the rationality does not depend 
on the $\bQ$-conjugacy class by Theorem \ref{thmain2}. 
Indeed, the actions of the groups which belong to 4th crystal system (I) 
and the groups which belong to 4th crystal system (II) on 
$K(x_1,x_2,x_3)^{\langle\caa^2\rangle}=K(z_1,z_2,z_3)$ 
coincide (see Case 1 in Subsection \ref{subsec:4}). 
Hence we may assume that $G/N_1$ belongs to 
the 4th crystal system (I) and 
$M_1=M_{1,1}\oplus M_{1,2}$ where 
${\rm rank}_{\bZ}M_{1,1}=1$ and ${\rm rank}_{\bZ} M_{1,2}=2$. 
Therefore, it follows from \cite[Proposition 5.2]{HKK14} 
that $k(M)^G$ is $k$-rational. \\

Step 3. 
By Steps 1 and 2, the remaining cases that we should treat are 
$(G/N_1,N_1N_2/N_1)\simeq$ $(G_{7,j,3},\cA_4)$, 
$(G_{7,j,3},\cC_2\times \cC_2)$ $(j=2,3,5)$ and 
$(G/N_2,N_1N_2/N_2)\simeq(\cC_4,\cC_2)$, $(\cD_4,\cC_2)$. 

Note that $N_1N_2/N_1\simeq N_2$ and $N_1N_2/N_2\simeq N_1$ 
because $N_1\cap N_2=\{1\}$. 
Thus $\#G=\# G_{7,j,3}\times \# N_1=\# G/N_2\times \# N_2$ 
where $N_1\simeq \cC_2$ and $N_2\simeq \cA_4$ or $\cC_2\times\cC_2$. 
Hence we see that only the cases where 
$(G/N_1,N_2,G/N_2,N_1)\simeq (G_{7,2,3},\cA_4,\cC_4,\cC_2)$, 
$(G_{7,3,3},\cA_4,\cC_4,\cC_2)$ or $(G_{7,5,3},\cA_4,\cD_4,\cC_2)$ may occur. 
The last statement follows from Step 2 and Proposition \ref{prop1} (5).\qed

%%%%%%%%%%%%%%%%%%%%%%%%%%%%%%%%%%%%%%%%%%%%%%%%%%%%%%%%%%%%%%%
%

\end{document}